\newcommand{\bhline}[1]{\noalign{\hrule height #1}}
\newcommand\twoline[2]{\mathclap{\substack{#1\\#2}}}
\newcommand\threeline[3]{\mathclap{\substack{#1\\#2\\#3}}}
\newtheorem{lemma}{Lemma}[section]
\newtheorem{remark}{Remark}[section]
\title{An Iterated Dual Substitution Approach for \\
Binary Integer Programming Problems under\\
the Min--Max Regret Criterion}
\author{
    Wei Wu\\
    Shizuoka University\\
    Hamamatsu, Japan\\
    \texttt{goi@shizuoka.ac.jp}\\
    \And
    Manuel~Iori\\
    University of Modena and Reggio Emilia\\
    Reggio Emilia, Italy\\
    \texttt{manuel.iori@unimore.it}\\
    \AND
    Silvano~Martello\\
    University of Bologna\\
    Bologna, Italy\\
    \texttt{silvano.martello@unibo.it}\\
    \And
    Mutsunori~Yagiura\\
    Nagoya University\\
    Nagoya, Japan\\
    \texttt{yagiura@nagoya-u.jp}\\
}
\begin{document}
\maketitle

\begin{abstract}
We consider binary integer programming problems with the min--max regret objective function under interval objective coefficients.
We propose a new heuristic framework, which we call the iterated dual substitution (iDS) algorithm.
The iDS algorithm iteratively invokes a dual substitution heuristic and excludes from the search space any solution already checked in previous iterations.
In iDS, we use a best-scenario-based lemma to improve performance.
We apply iDS to four typical combinatorial optimization problems:
the knapsack problem, the multidimensional knapsack problem, the generalized assignment problem, and the set covering problem.
For the multidimensional knapsack problem, we compare the iDS approach with two algorithms widely used for problems with the min--max regret criterion:
a fixed-scenario approach, and a branch-and-cut approach.
The results of computational experiments on a broad set of benchmark instances show that the proposed iDS approach performs best on most tested instances.
For the knapsack problem, the generalized assignment problem, and the set covering problem,
we compare iDS with state-of-the-art results.
The iDS algorithm successfully updates best known records for a number of benchmark instances.
\end{abstract}

\keywords{Combinatorial optimization \and Min--max regret \and Heuristics \and Iterated dual substitution}

\section{Introduction}
Incomplete information, poor predictions of future events, or mismeasurements can make it difficult to provide accurate input parameters for real-world optimization problems.
Robust optimization is a common approach for solving optimization problems under uncertainty.
The min--max regret criterion is a representative criterion for robust optimization
in which value or cost parameters in the objective function are inaccurate in the optimization phase.
When a set of value or cost parameters (called a \emph{scenario}) is revealed,
the \emph{regret} of a solution is defined as the difference between the resulting objective value of the solution and the value of the optimal solution to the revealed scenario.
In this paper, we focus on the minimization of the maximum regret (min--max regret) over all scenarios.

The study of min--max regret strategy is motivated by practical applications
in which the decision maker has to make a decision and the regret for the decision in case of a wrong choice is crucial.
Recently, models and solution methods to the min--max regret problems have been applied to a number of real-world cases in many fields, such as
water resources management \citep[see][]{PKN12},
greenhouse gas abatement \citep[see][]{LK99},
semiconductor manufacturing \citep[see][]{CZJ12},
assembly line worker assignment and balancing \citep[see][]{Pe18},
power management \citep[see][]{DHCX11},
revenue management \citep[see][]{AKM16},
and portfolio optimization \citep[see][]{XMHZ17}.

The min--max regret versions of important combinatorial optimization problems such as 
the assignment problem \citep{PA11},
the shortest path problem \citep{KPY01},
the 0-1 knapsack problem \citep{FIMY15},
the generalized assignment problem \citep{WIMY18},
the traveling salesman problem \citep{MBMG07},
and the set covering problem \citep{PA13} have been recently studied,
and both exact and heuristic algorithms have been proposed for this kind of problems.
Several attempts at heuristic methods have been made,
including constructive heuristics based on the solution for a fixed scenario \citep[see][]{KZ06} or on the inclusion of a dual relaxation component \citep[see][]{FIMY15},
as well as more elaborated metaheuristics such as genetic algorithms or filter-and-fan methods \citep[see][]{PA13}. 
Concerning exact algorithms, Benders-like decomposition methods iteratively solve a relaxed master problem in which only a subset of scenarios is considered for constraints.
Scenarios corresponding to violated constraints are computed by solving a slave problem and possibly added to its master problem until a solution satisfying all constraints with respect to the scenario is found \citep[see, e.g.,][]{M06}.
Branch-and-cut methods, which extend Benders-like decomposition by including the solution of the slave problem at all nodes of the enumeration tree,
have been applied to several min--max regret problems \citep[see][]{PA13}.
Other authors have instead used the structure of a min--max regret problem to devise specifically tailored combinatorial branch-and-bound algorithms \citep[see][]{MG05-2}.
For general introductions to this area, we refer to \citet{ABV09}, \citet{CAM11}, \citet{K08}, and \citet{KY97}.

In this paper, we consider binary integer programming problems with the min--max regret objective function under interval objective coefficients.
In Section~\ref{sec:PD}, we describe the problem and we introduce a mixed integer programming model for it.
In Section~\ref{sec:CH}, we provide a brief introduction to classical heuristics for this kind of problems.
In Section~~\ref{sec:iDSH}, we propose a new heuristic framework, which we call the \emph{iterated dual substitution} (iDS) algorithm.
To exclude already checked solutions from the search space, the iDS approach considers two kinds of constraints:
those for pruning checked solutions based on the Hamming distance
and those using a lemma based on the best scenario.
We apply the proposed approach to the min--max regret version of four  representative combinatorial optimization problems:
the knapsack problem, the multidimensional knapsack problem, the generalized assignment problem, and the set covering problem.
In Section~\ref{sec:TP}, we describe these four problems, and we present computational results for their min--max regret versions in Section~\ref{sec:CE}. 
For the multidimensional knapsack problem, we compare the iDS approach with two algorithms widely applied to problems with the min--max regret criterion: 
a fixed-scenario approach and a branch-and-cut approach.
We evaluate these algorithms through computational experiments on a broad set of benchmark instances (available online).
The proposed iDS algorithm performs best for most tested instances.
For the knapsack problem, the generalized assignment problem, and the set covering problem,
we compare iDS with state-of-the-art results.
The results show that the iDS algorithm is highly efficient as a heuristic, successfully updating the best known records for a number of benchmark instances.

\section{Problem Description}\label{sec:PD}
Let $I=\{1,2,\ldots,m\}$ and $J=\{1,2,\ldots,n\}$. In this paper, we consider a \emph{binary integer programming} problem (BIP) with a linear objective function and linear constraints defined as
\begin{align}
\label{std:Obj}     \max\hspace{2mm}            &\sum^n_{j=1}c_{j}x_j                               \\
\label{std:InqCon}  \mathrm{s.t.}\hspace{4mm}   &\sum^n_{j=1}a_{ij}x_j\le b_{i}  &\forall i \in I,	\\
\label{std:IteCon}                              &x_j\in \{0,1\}                  &\forall j \in J,
\end{align}
where $\boldsymbol{x}$ represents the vector of variables,
$\boldsymbol{c}$ and $\boldsymbol{b}$ are vectors of coefficients,
and $\boldsymbol{A}=(a_{ij})$ is a matrix of coefficients.
For convenience, we define the set of all feasible solutions as 
\begin{align}
X_0 = \left\{\boldsymbol{x} \mid \boldsymbol{x} \text{ satisfies constraints \mbox{\eqref{std:InqCon}--\eqref{std:IteCon}}}\right\}.
\end{align}
Note that minimization problems and problems with equality constraints can be rewritten as an equivalent problem in the form of Eqs.~\eqref{std:Obj}--\eqref{std:IteCon}.
This form includes several classical combinatorial optimization problems
such as the knapsack problem, the assignment problem, the shortest path problem, and the set covering problem.

\subsection{Min--Max Regret Criterion}\label{sec:MMRC}
In many real-world applications, the objective coefficients $c_{j}$ are affected by various factors and therefore may be inaccurate.
Here, we assume that every $c_{j}$ can take any value within the range $[c^-_{j}, c^+_{j}]$.
A scenario $s$ is defined as a vector of costs $c^s_{j}$ satisfying $c^s_{j}\in [c^-_{j}, c^+_{j}], \forall j\in J$.
We denote the objective value of a solution $\boldsymbol{x}$ under scenario $s$ as
\begin{align*}
z^s(\boldsymbol{x})=\sum^n_{j=1}c^s_{j}x_j,
\end{align*}
and the optimal value under scenario $s$ as
\begin{align*}
z^s_*=\max_{\boldsymbol{y}\in X_0}z^s(\boldsymbol{y}).
\end{align*}
The \emph{regret} $r^s(\boldsymbol{x})$ corresponding to a solution $\boldsymbol{x}$ under scenario $s$ is then the difference between these two values:
\begin{align*}
r^s(\boldsymbol{x}) = z^s_* - z^s(\boldsymbol{x}).
\end{align*}
We define $S=\left\{s \ \big|\  c^s_{j}\in[c^-_{j}, c^+_{j}], \forall j\in J\right\}$ to be the set of all possible scenarios.
The \emph{maximum regret} $r_{\max}(\boldsymbol{x})$ of a solution $\boldsymbol{x}$ is then the maximum $r^s(\boldsymbol{x})$ value over all scenarios:
\begin{align}
\label{def:rmax}	r_{\max}(\boldsymbol{x}) = \max_{s\in S}  r^s(\boldsymbol{x}).
\end{align}
The \emph{binary integer programming problem under min--max regret} criterion (\mbox{MMR-BIP}) is the problem of finding a feasible solution $\boldsymbol{x}$ that minimizes the maximum regret:
\begin{align}
\nonumber	\min_{\boldsymbol{x}\in X_0} r_{\max}(\boldsymbol{x})	&=\min_{\boldsymbol{x}\in X_0}\max_{s\in S}  r^s(\boldsymbol{x})\\
\label{form:old}									&=\min_{\boldsymbol{x}\in X_0}\max_{s\in S} \left\{\max_{\boldsymbol{y}\in X_0}\sum^n_{j=1}c^s_{j}y_j - \sum^n_{j=1}c^s_{j}x_j\right\}.
\end{align}
The following lemma is a classic general result proposed by~\citet{ABV09} (based on~\citet{YKP01}).
\begin{lemma}\label{LemWorstCase}
The regret of a solution $\boldsymbol{x}\in X_0$ is maximized under the scenario $\sigma(\boldsymbol{x})$ defined as
\begin{align}\label{def:worstSen}
c^{\sigma(\boldsymbol{x})}_j=
\begin{cases}
c^-_j		& \textrm{if}\ \ x_j=1 \\
c^+_j	& \textrm{otherwise}
\end{cases}&&\forall j \in J.
\end{align}
\end{lemma}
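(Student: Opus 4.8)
The plan is to show that no scenario can yield a larger regret than $\sigma(\boldsymbol{x})$, by comparing $\sigma(\boldsymbol{x})$ against an arbitrary competing scenario coordinate by coordinate. The central fact I would exploit is that the regret can be written so that each cost coefficient enters with a sign determined by $\boldsymbol{x}$ alone, independently of the scenario's optimal solution. Writing $\boldsymbol{y}^s$ for an optimal solution under scenario $s$, I would start from
\begin{align*}
r^s(\boldsymbol{x}) = z^s(\boldsymbol{y}^s) - z^s(\boldsymbol{x}) = \sum_{j=1}^n c_j^s\,(y_j^s - x_j).
\end{align*}
Here $y_j^s - x_j \in \{-1,0,1\}$, and its sign is pinned down by $x_j$: when $x_j=1$ it is $\le 0$, and when $x_j=0$ it is $\ge 0$.

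First I would fix an arbitrary scenario $s\in S$ together with its optimizer $\boldsymbol{y}^s$, and bound each summand. For indices with $x_j=1$ the factor $y_j^s - x_j \le 0$, so replacing $c_j^s \ge c_j^-$ by its lower bound $c_j^- = c_j^{\sigma(\boldsymbol{x})}$ can only increase $c_j^s (y_j^s - x_j)$. Symmetrically, for indices with $x_j=0$ the factor $y_j^s - x_j \ge 0$, so replacing $c_j^s \le c_j^+$ by its upper bound $c_j^+ = c_j^{\sigma(\boldsymbol{x})}$ again only increases the summand. Summing these per-coordinate inequalities yields
\begin{align*}
r^s(\boldsymbol{x}) \le \sum_{j=1}^n c_j^{\sigma(\boldsymbol{x})}\,(y_j^s - x_j) = z^{\sigma(\boldsymbol{x})}(\boldsymbol{y}^s) - z^{\sigma(\boldsymbol{x})}(\boldsymbol{x}).
\end{align*}

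Finally, since $\boldsymbol{y}^s \in X_0$ is feasible, its objective under $\sigma(\boldsymbol{x})$ is at most the optimal value $z^{\sigma(\boldsymbol{x})}_*$, so the right-hand side is bounded by $z^{\sigma(\boldsymbol{x})}_* - z^{\sigma(\boldsymbol{x})}(\boldsymbol{x}) = r^{\sigma(\boldsymbol{x})}(\boldsymbol{x})$. As $s$ was arbitrary, this proves that $r^{\sigma(\boldsymbol{x})}(\boldsymbol{x})$ dominates the regret of every scenario and therefore equals $r_{\max}(\boldsymbol{x})$.

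I do not expect a serious obstacle: the argument is essentially a decoupled per-coordinate optimization, and the only point needing care is the observation that the sign of $y_j^s - x_j$ depends solely on $x_j$, which is what lets the worst-case cost be chosen independently for each $j$ and makes $\sigma(\boldsymbol{x})$ simultaneously optimal against every feasible $\boldsymbol{y}$. An equivalent and perhaps more conceptual route would be to swap the two maxima in $r_{\max}(\boldsymbol{x}) = \max_{s\in S}\max_{\boldsymbol{y}\in X_0}\sum_j c_j^s(y_j - x_j)$ and note that the inner maximizer over $s$ is the single scenario $\sigma(\boldsymbol{x})$ regardless of $\boldsymbol{y}$; I would likely remark on this as the underlying reason the lemma holds.
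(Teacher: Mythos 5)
Your argument is correct: the sign of $y_j^s-x_j$ is indeed determined by $x_j$ alone, so the per-coordinate replacement $c_j^s\mapsto c_j^{\sigma(\boldsymbol{x})}$ can only increase each summand, and bounding $z^{\sigma(\boldsymbol{x})}(\boldsymbol{y}^s)$ by $z^{\sigma(\boldsymbol{x})}_*$ finishes the proof. The paper itself offers no proof of this lemma --- it cites it as a classical result of \citet{ABV09} (based on \citet{YKP01}) --- and your derivation is precisely the standard argument behind that citation, so there is nothing to reconcile.
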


From Lemma~\ref{LemWorstCase}, the \mbox{MMR-BIP} can be rewritten as
\begin{align}
\label{form:mmrbipBasic}	&\min_{\boldsymbol{x}\in X_0}r_{\max}(\boldsymbol{x})
						=\min_{\boldsymbol{x}\in X_0}\left\{\max_{\boldsymbol{y}\in X_0}\sum^n_{j=1}c^{\sigma(\boldsymbol{x})}_jy_j - \sum^n_{j=1}c^-_jx_j\right\}.
\end{align}

The MMR-BIP is $\Sigma^p_2$-hard,
because it has as a special case the interval min--max regret knapsack problem \citep{FIMY15}, which has been proved by \citet{DW10}
to be $\Sigma_2^p$-hard \citep[see][Chap. 7]{GJ79}.

\subsection{Mixed Integer Programming Model} \label{sec:MIP}
This section presents a \emph{mixed integer programming} (MIP) model for the \mbox{MMR-BIP}. 
By introducing a new continuous variable $\lambda$, along with constraints that force $\lambda$ to satisfy $\lambda \ge z_{*}^{s}$, $\forall s\in S$,
the \mbox{MMR-BIP} can be formulated as the following MIP model (\mbox{MIP-MMR-BIP}):
\begin{align}
\label{form:mipObj}		\min\hspace{2mm}		&\lambda-\sum_{j=1}^{n} c^-_jx_j																							\\
\label{form:mipBenCon}	\mathrm{s.t.}\hspace{3mm} 	&\lambda \ge \sum^n_{j=1}c^{\sigma(\boldsymbol{x})}_jy_j = \sum_{j=1}^{n}\left(c^+_j+\left(c^-_j-c^+_j\right)x_j\right)y_j	&\forall \boldsymbol{y} \in X_0,	\\
\label{form:mipXCon}								&\boldsymbol{x}\in X_0.
\end{align}

Observe that this model has an exponential number of constraints~\eqref{form:mipBenCon}.
In the following, we call constraints~\eqref{form:mipBenCon} \emph{Benders cuts}. 

\section{Classical Heuristics} \label{sec:CH}
In this section, we present classical algorithms for the \mbox{MMR-BIP}, including a fixed-scenario algorithm and a branch-and-cut framework.

\subsection{Fixed-Scenario Algorithm}\label{sec:FSA}
The fixed-scenario algorithm is designed based on the observation that the feasible region of the \mbox{MMR-BIP} is the same as that of the classical BIP. 
This implies that we can obtain a feasible solution to an \mbox{MMR-BIP} instance by fixing a scenario, solving the resulting BIP instance to optimality,
and using \eqref{form:mmrbipBasic} to evaluate the maximum regret of the obtained solution.
This approach has been used for solving a number of interval min--max regret problems (see, e.g., \citealp{KZ06}, \citealp{PA13}, and \citealp{WIMY18}).

Among the three most commonly used scenarios, namely, lowest value $\boldsymbol{c^-}$, highest value $\boldsymbol{c^+}$, and median value $(\boldsymbol{c^-}+\boldsymbol{c^+})/2$, 
in this paper we consider the median-value scenario, for which the following general result, proved by~\cite{KZ06}, can be directly applied to the \mbox{MMR-BIP}. 
\begin{lemma}\label{LemMidPnt}
Let $\widetilde{s}$ be the median-value scenario, i.e.,\ $c^{\widetilde{s}}_j = (c^-_j+c^+_j)/2,\ \forall j\in J$, and let $\boldsymbol{\widetilde{x}}$ be an optimal solution to the BIP under $\widetilde{s}$. Then, $r_{\max}(\boldsymbol{\widetilde{x}})\le 2r_{\max}(\boldsymbol{x})$ holds for all $\boldsymbol{x}\in X_0$.
\end{lemma}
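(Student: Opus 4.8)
The plan is to reduce the claim to two self-contained inequalities and then combine them additively. First I would rewrite the maximum regret in a form that isolates the role of the interval half-widths. Setting $d_j = (c^+_j - c^-_j)/2 \ge 0$ and using Lemma~\ref{LemWorstCase} to replace the worst scenario by $\sigma(\boldsymbol{x})$, a direct substitution of $c^{\sigma(\boldsymbol{x})}_j = c^{\widetilde{s}}_j + d_j(1-2x_j)$ and $c^-_j = c^{\widetilde{s}}_j - d_j$ into \eqref{form:mmrbipBasic} yields, for every $\boldsymbol{x}\in X_0$,
\begin{align*}
r_{\max}(\boldsymbol{x}) = \max_{\boldsymbol{y}\in X_0} H(\boldsymbol{x},\boldsymbol{y}), \qquad H(\boldsymbol{x},\boldsymbol{y}) = \sum_{j=1}^n c^{\widetilde{s}}_j(y_j-x_j) + \sum_{j=1}^n d_j\,|y_j-x_j|,
\end{align*}
where the identity $(1-2x_j)y_j + x_j = |y_j - x_j|$ for binary $x_j,y_j$ is what produces the Hamming-type term. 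This reformulation is the workhorse for everything that follows.

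Second, I would prove an auxiliary lower bound: $\sum_{j} d_j\,|\widetilde{x}_j - x_j| \le r_{\max}(\boldsymbol{x})$ for every $\boldsymbol{x}\in X_0$. This follows by taking $\boldsymbol{y}=\boldsymbol{\widetilde{x}}$ as a (suboptimal) adversary in the reformulation, so that $r_{\max}(\boldsymbol{x}) \ge H(\boldsymbol{x},\boldsymbol{\widetilde{x}})$, and observing that the linear part $\sum_{j} c^{\widetilde{s}}_j(\widetilde{x}_j - x_j) = z^{\widetilde{s}}(\boldsymbol{\widetilde{x}}) - z^{\widetilde{s}}(\boldsymbol{x})$ is nonnegative precisely because $\boldsymbol{\widetilde{x}}$ is optimal for the BIP under $\widetilde{s}$. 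Only the half-width term survives, which is the asserted bound.

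Third — and this is where I expect the real work to be — I would establish a one-sided Lipschitz estimate $r_{\max}(\boldsymbol{\widetilde{x}}) - r_{\max}(\boldsymbol{x}) \le \sum_{j} d_j\,|\widetilde{x}_j - x_j|$. The obstacle is that this compares two separate maximizations over adversaries $\boldsymbol{y}$, and a naive coordinatewise argument fails; indeed the estimate is false for a general solution in place of $\boldsymbol{\widetilde{x}}$, so midpoint optimality must be used essentially. The device I would use is to fix a single common adversary: let $\boldsymbol{y}^\sigma$ attain the maximum defining $r_{\max}(\boldsymbol{\widetilde{x}})$, and lower-bound $r_{\max}(\boldsymbol{x}) \ge H(\boldsymbol{x},\boldsymbol{y}^\sigma)$ by the same $\boldsymbol{y}^\sigma$. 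Then $r_{\max}(\boldsymbol{\widetilde{x}}) - r_{\max}(\boldsymbol{x}) \le H(\boldsymbol{\widetilde{x}},\boldsymbol{y}^\sigma) - H(\boldsymbol{x},\boldsymbol{y}^\sigma)$, whose linear part equals $z^{\widetilde{s}}(\boldsymbol{x}) - z^{\widetilde{s}}(\boldsymbol{\widetilde{x}}) \le 0$ again by optimality of $\boldsymbol{\widetilde{x}}$, while the half-width part collapses to $\sum_{j} d_j\bigl(|y^\sigma_j - \widetilde{x}_j| - |y^\sigma_j - x_j|\bigr)$, which is at most $\sum_{j} d_j\,|\widetilde{x}_j - x_j|$ by the reverse triangle inequality. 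These two ingredients — midpoint optimality killing the linear term and the reverse triangle inequality controlling the absolute-value term — are exactly what make the estimate valid for the midpoint solution but not in general.

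Finally, adding the second and third steps gives $r_{\max}(\boldsymbol{\widetilde{x}}) \le r_{\max}(\boldsymbol{x}) + \sum_{j} d_j\,|\widetilde{x}_j - x_j| \le 2\,r_{\max}(\boldsymbol{x})$, which is the claim. Since both the auxiliary bound and the Lipschitz estimate use only the optimality of $\boldsymbol{\widetilde{x}}$ under $\widetilde{s}$ and hold for arbitrary $\boldsymbol{x}\in X_0$, no appeal to a specific (e.g.\ regret-optimal) $\boldsymbol{x}$ is required, and the result follows for all feasible $\boldsymbol{x}$ at once.
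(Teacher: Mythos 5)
Your proof is correct. Note first that the paper itself does not prove this lemma: it states it as a known result and cites \citet{KZ06}, so there is no in-paper proof to compare against; your argument is a self-contained derivation of that cited result. Each step checks out. The reformulation $r_{\max}(\boldsymbol{x})=\max_{\boldsymbol{y}\in X_0}H(\boldsymbol{x},\boldsymbol{y})$ with $H(\boldsymbol{x},\boldsymbol{y})=\sum_j c^{\widetilde{s}}_j(y_j-x_j)+\sum_j d_j\,|y_j-x_j|$ is valid, since the identity $(1-2x_j)y_j+x_j=|y_j-x_j|$ holds for all four binary combinations and Lemma~\ref{LemWorstCase} justifies restricting to $\sigma(\boldsymbol{x})$. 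The lower bound $r_{\max}(\boldsymbol{x})\ge\sum_j d_j\,|\widetilde{x}_j-x_j|$ follows as you say by taking $\boldsymbol{y}=\boldsymbol{\widetilde{x}}$ and using optimality of $\boldsymbol{\widetilde{x}}$ under $\widetilde{s}$ to discard the nonnegative linear term; the Lipschitz-type bound $r_{\max}(\boldsymbol{\widetilde{x}})-r_{\max}(\boldsymbol{x})\le\sum_j d_j\,|\widetilde{x}_j-x_j|$ follows by fixing the common adversary $\boldsymbol{y^\sigma}$ (which exists because $X_0$ is finite), using midpoint optimality once more to make the linear difference nonpositive, and the reverse triangle inequality coordinatewise. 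Summing gives the factor $2$, uniformly over $\boldsymbol{x}\in X_0$. In substance this is the standard Kasperski--Zieli\'{n}ski argument---two inequalities, each spending one application of midpoint optimality---but your packaging through the Hamming-weighted potential $H$ makes transparent exactly where optimality under $\widetilde{s}$ is needed and why the claim would fail for a non-midpoint fixed scenario, which is a genuine expository improvement over simply citing the result.
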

\noindent Note that Lemma~\ref{LemMidPnt} also holds if the original BIP is a minimization problem, that is, 
\begin{align}
\label{std:MinObj}    \min\hspace{2mm} 
&\sum^n_{j=1}c_{j}x_j                               \\
\nonumber\mathrm{s.t.}\hspace{3mm}   &\text{\eqref{std:InqCon} and \eqref{std:IteCon}}.
\end{align} 
\subsection{Branch-and-Cut Algorithm} \label{sec:BC}
Branch-and-cut is a standard exact approach widely applied to interval min--max regret problems (see \citealp{FIMY15}, \citealp{MBMG07}, \citealp{PA13}, and \citealp{WIMY18}).
We introduce an implementation of this algorithm using Benders cuts~\eqref{form:mipBenCon} based on a basic branch-and-cut framework.

We define $P(X)$ as a relaxation of the \mbox{MIP-MMR-BIP}, in which set $X_0$ in constraints~\eqref{form:mipBenCon} is replaced by a subset $X\subseteq X_0$:
\begin{align}\label{MipMmrOpConWithX}
\lambda \ge \sum_{j=1}^{n}\left(c^+_j+\left(c^-_j-c^+_j\right)x_j\right)y_j	&&\forall y \in X.
\end{align}
For a feasible solution $(\boldsymbol{x^*}, \lambda^*)$ to problem $P(X)$, we define a \emph{slave problem} $Q(\boldsymbol{x^*})$ as
\begin{align}\label{Benders'Cut}
\max_{\boldsymbol{y}\in X_0}\sum_{j=1}^{n}\left(c^+_j+\left(c^-_j-c^+_j\right)x^*_j\right)y_j.
\end{align}
Let $\boldsymbol{y^*}$ be an optimal solution to $Q(\boldsymbol{x^*})$ and $q(\boldsymbol{y^*})$ be the corresponding objective value.
If $q(\boldsymbol{y^*})>\lambda^*$ holds, then the constraint~\eqref{form:mipBenCon} induced by $\boldsymbol{y^*}$ is violated by the current optimal solution $(\boldsymbol{x^*}, \lambda^*)$ of $P(X)$.

We solve $P(X)$ by a branch-and-cut framework.
When an integer-feasible solution candidate $\boldsymbol{x^*}$ has been identified,
we check a violated constraint~\eqref{form:mipBenCon} by solving the slave problem $Q(\boldsymbol{x^*})$.
If such a violated constraint is found, we discard the candidate solution $\boldsymbol{x^*}$, adding the solution $\boldsymbol{y^*}$ to $X$ and continuing the branch-and-cut with the new $P(X)$;
otherwise, we update the incumbent solution and prune the node.

The general branch-and-cut framework can be implemented in many ways.
In our implementation, we maintain the set $X$ for constraints~\eqref{MipMmrOpConWithX} as follows.
To prevent the LP relaxation $\bar{P}(X)$ at the root node from being unbounded,
we start with set $X = \{\boldsymbol{\widetilde{x}}\}$,
where $\boldsymbol{\widetilde{x}}$ is the optimal solution obtained by the fixed-scenario heuristic under the median-value scenario.
All cuts added to $X$ at any active node are used throughout the remaining computations, and so contribute to all other active nodes.

The branch-and-cut algorithm usually obtains feasible solutions during the search process,
and hence it can also be used as a heuristic by prematurely terminating its execution.
\section{Iterated Dual Substitution Heuristic} \label{sec:iDSH}
In this section, we propose an approach obtained by iterating a dual substitution heuristic.
\subsection{Dual Substitution Heuristic}\label{sec:DSH}
The \emph{dual substitution} (DS) heuristic is an algorithm based on a MIP model in which a subproblem is replaced by the dual counterpart of its linear relaxation.
\citet{K08} provided a general technique,
and similar ideas of using MIP models were used in other min--max regret problems (see \citealp{FIMY15}, \citealp{KPY01}, \citealp{KZ06}, \citealp{WIMY18}, and \citealp{YKP01}).
In this paper, we use it as a heuristic for the \mbox{MMR-BIP}.

For every fixed $\boldsymbol{x}$, the maximization problem over $\boldsymbol{y}$ in \eqref{form:mmrbipBasic} is a classical BIP that can be expressed as 
\begin{align}
\label{form:yObj}		\max\hspace{2mm}		&\sum^n_{j=1}c^{\sigma{(\boldsymbol{x})}}_jy_j					\\
\label{form:yInqCon}	\mathrm{s.t.}\hspace{4mm}	&\sum^n_{j=1}a_{ij}y_j\le b_{i}			&\forall i \in I,		\\
\label{form:yIteCon}							&y_j\in \{0,1\}						&\forall j \in J.
\end{align}
We consider a corresponding linear program, obtained by replacing the integrality constraint~\eqref{form:yIteCon} with
\begin{align}
\label{con:yCon}0\le y_j\le 1	&&\forall j \in J.
\end{align}

We define two types of dual variables: $u_i\ (i\in I)$ for constraints~\eqref{form:yInqCon} and $v_j\ (j\in J)$ for the $\le 1$ constraints in~\eqref{con:yCon}.
The dual of problem~\eqref{form:yObj}, \eqref{form:yInqCon}, \eqref{con:yCon} can be formulated as
\begin{align}
\label{form:yDualObj}		\min\hspace{2mm}		&\sum^m_{i=1}b_iu_i+\sum^n_{j=1}v_j\\
\label{form:yDualConS}	\mathrm{s.t.}\hspace{3mm}	&\sum^m_{i=1}a_{ij}u_i+v_j\ge c^{\sigma{(\boldsymbol{x})}}_j = c^+_j+\left(c^-_j-c^+_j\right)x_j	&\forall j \in J,	\\
												&u_i\ge0 																	&\forall i \in I,		\\
\label{form:yDualConE}								&v_j\ge0 																	&\forall j \in J.
\end{align}

We then embed the dual counterpart into \eqref{form:mmrbipBasic}.
By replacing $\boldsymbol{y}\in X_0$ with constraints~\eqref{form:yDualConS}--\eqref{form:yDualConE}, and by replacing the inner maximization problem~\eqref{form:yObj} with~~\eqref{form:yDualObj}, 
we obtain the following \emph{dual substitution model} (\mbox{D-MMR-BIP}):
\begin{align}
\label{form:idsObj}		\min\hspace{2mm}		&\sum^m_{i=1}b_iu_i+\sum^n_{j=1}v_j-\sum^n_{j=1}c^-_jx_j\\
						\mathrm{s.t.}\hspace{3mm}	&\sum^m_{i=1}a_{ij}u_i+v_j\ge c^+_j+\left(c^-_j-c^+_j\right)x_j	&\forall j \in J,	\\
												&u_i\ge0												&\forall i \in I,		\\
												&v_j\ge0												&\forall j \in J,	\\
\label{form:idsEnd}								&\boldsymbol{x}\in X_0.
\end{align}

The dual substitution heuristic exactly solves the \mbox{D-MMR-BIP} and outputs the obtained solution.
The \mbox{D-MMR-BIP} is not easier than the BIP, since it contains all the BIP constraints.
\begin{remark}
\label{WeakUpperBound}The optimal value of the \mbox{D-MMR-BIP} is an upper bound on the optimal value for the \mbox{MMR-BIP}.
\end{remark}

In addition, for any instance, a tighter upper bound can be obtained as follows.
\begin{remark}
The bound obtained by evaluating the maximum regret of any optimal solution to the \mbox{D-MMR-BIP} is at least as good as the optimal value to the \mbox{D-MMR-BIP}.
\end{remark}

Our computational experiments showed that the DS heuristic obtains better solutions than does the fixed-scenario heuristic for most instances, although it does not guarantee solution quality for some problems, including the generalized assignment problem \citep{WIMY18} and the multidimensional knapsack problem \citep{WIMY16}.
\subsection{Best Scenario}\label{sec:Bs}
Before describing the iterated dual substitution heuristic, we introduce some relevant properties of the BIP. 
For a solution $\boldsymbol{x}$, let us consider the scenario
\begin{align*}
\phi(\boldsymbol{x}) = 
\begin{cases}
c^+_j &\mathrm{if\ }x_j=1\\
c^-_j &\mathrm{otherwise},
\end{cases}
\end{align*}
opposite to the worst-case scenario~\eqref{def:worstSen},
which we call the \emph{best scenario} for solution $\boldsymbol{x}$.
\begin{lemma}\label{lem:bst1}
If $\boldsymbol{x^*}$ is an optimal solution of the BIP~\eqref{std:Obj}--\eqref{std:IteCon} under a scenario $s \in S$,
then $\boldsymbol{x^*}$ is also an optimal solution under its best scenario $\phi(\boldsymbol{x^*})$.
\end{lemma}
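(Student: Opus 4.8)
The plan is to show directly that $\boldsymbol{x^*}$ beats every competitor under its best scenario, reducing this to the optimality of $\boldsymbol{x^*}$ under $s$. Fix an arbitrary feasible solution $\boldsymbol{y}\in X_0$; it suffices to prove $z^{\phi(\boldsymbol{x^*})}(\boldsymbol{x^*})\ge z^{\phi(\boldsymbol{x^*})}(\boldsymbol{y})$. First I would write the two objective gaps as coordinatewise sums,
\[
z^{\phi(\boldsymbol{x^*})}(\boldsymbol{x^*})-z^{\phi(\boldsymbol{x^*})}(\boldsymbol{y})=\sum_{j=1}^n c^{\phi(\boldsymbol{x^*})}_j\left(x^*_j-y_j\right),\qquad
z^{s}(\boldsymbol{x^*})-z^{s}(\boldsymbol{y})=\sum_{j=1}^n c^{s}_j\left(x^*_j-y_j\right),
\]
and then compare them term by term.

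The core step is the termwise inequality $c^{\phi(\boldsymbol{x^*})}_j\left(x^*_j-y_j\right)\ge c^{s}_j\left(x^*_j-y_j\right)$ for every $j\in J$, which I would verify by splitting on the value of $x^*_j$. If $x^*_j=1$, then $\phi(\boldsymbol{x^*})$ assigns the largest possible cost $c^+_j\ge c^s_j$ while the multiplier $x^*_j-y_j=1-y_j\ge 0$, so increasing the coefficient cannot decrease the term. If $x^*_j=0$, then $\phi(\boldsymbol{x^*})$ assigns the smallest possible cost $c^-_j\le c^s_j$ while the multiplier $x^*_j-y_j=-y_j\le 0$, so decreasing the coefficient again cannot decrease the term. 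Summing these $n$ inequalities yields
\[
z^{\phi(\boldsymbol{x^*})}(\boldsymbol{x^*})-z^{\phi(\boldsymbol{x^*})}(\boldsymbol{y})\ \ge\ z^{s}(\boldsymbol{x^*})-z^{s}(\boldsymbol{y}).
\]

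Finally, since $\boldsymbol{x^*}$ is optimal for the BIP under scenario $s$, the right-hand side is nonnegative for every feasible $\boldsymbol{y}$, and hence so is the left-hand side. As $\boldsymbol{y}\in X_0$ was arbitrary, $\boldsymbol{x^*}$ maximizes $z^{\phi(\boldsymbol{x^*})}$ over $X_0$, i.e.\ it is optimal under its best scenario, as claimed. The only delicate point is the bookkeeping that aligns the sign of the multiplier $x^*_j-y_j$ with the direction in which $\phi(\boldsymbol{x^*})$ perturbs each coefficient relative to $s$; once these are matched correctly, the result is an elementary term-by-term comparison requiring no further machinery.
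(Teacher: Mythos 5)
Your proof is correct and follows essentially the same route as the paper: the paper's one-line argument (moving from $s$ to $\phi(\boldsymbol{x^*})$ can only raise the coefficients of items selected in $\boldsymbol{x^*}$ and lower those of unselected items) is exactly the observation your termwise inequality $c^{\phi(\boldsymbol{x^*})}_j(x^*_j-y_j)\ge c^{s}_j(x^*_j-y_j)$ formalizes. Your version simply spells out the sign bookkeeping that the paper leaves implicit.
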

\begin{proof}
In $\phi(\boldsymbol{x^*})$ the $c_j$ value of the items selected (resp. not selected) in $\boldsymbol{x^*}$ can only increase (resp. decrease).
\end{proof}
\noindent Note that even if the original BIP is a minimization problem~\eqref{std:MinObj}, \eqref{std:InqCon}, \eqref{std:IteCon}, Lemma~\ref{lem:bst1} holds with a slight modification, that is, the best scenario of $\boldsymbol{x^*}$ becomes $\sigma(\boldsymbol{x^*})$  in~\eqref{def:worstSen}. 

\begin{lemma}\label{lem:bst2}
For any feasible pair of solutions $\boldsymbol{\bar{x}},\boldsymbol{\hat{x}}\in X_0$ of the BIP~\eqref{std:Obj}--\eqref{std:IteCon},
if
\begin{align}\label{Lem:ConMAX}
z^{\phi(\boldsymbol{\bar{x}})}(\boldsymbol{\hat{x}}) \ge z^{\phi(\boldsymbol{\bar{x}})}(\boldsymbol{\bar{x}})
\end{align}
holds, then we have $r_{\max}(\boldsymbol{\hat{x}})\le r_{\max}(\boldsymbol{\bar{x}})$.
\end{lemma}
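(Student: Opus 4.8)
The plan is to reduce the claim to a single auxiliary inequality, namely $z^{\sigma(\boldsymbol{\hat{x}})}(\boldsymbol{\hat{x}}) \ge z^{\sigma(\boldsymbol{\hat{x}})}(\boldsymbol{\bar{x}})$, where $\sigma(\boldsymbol{\hat{x}})$ is the worst-case scenario of $\boldsymbol{\hat{x}}$ from Lemma~\ref{LemWorstCase}, and then to derive $r_{\max}(\boldsymbol{\hat{x}}) \le r_{\max}(\boldsymbol{\bar{x}})$ from it using only the definition of maximum regret.

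First I would establish the reduction. By Lemma~\ref{LemWorstCase}, $r_{\max}(\boldsymbol{\hat{x}}) = z^{\sigma(\boldsymbol{\hat{x}})}_* - z^{\sigma(\boldsymbol{\hat{x}})}(\boldsymbol{\hat{x}})$. On the other hand, since $\sigma(\boldsymbol{\hat{x}}) \in S$, the definition~\eqref{def:rmax} of maximum regret gives the lower bound $r_{\max}(\boldsymbol{\bar{x}}) \ge r^{\sigma(\boldsymbol{\hat{x}})}(\boldsymbol{\bar{x}}) = z^{\sigma(\boldsymbol{\hat{x}})}_* - z^{\sigma(\boldsymbol{\hat{x}})}(\boldsymbol{\bar{x}})$, because the regret of $\boldsymbol{\bar{x}}$ under the particular scenario $\sigma(\boldsymbol{\hat{x}})$ cannot exceed its worst-case regret. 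Subtracting these two relations makes the common term $z^{\sigma(\boldsymbol{\hat{x}})}_*$ cancel, yielding $r_{\max}(\boldsymbol{\bar{x}}) - r_{\max}(\boldsymbol{\hat{x}}) \ge z^{\sigma(\boldsymbol{\hat{x}})}(\boldsymbol{\hat{x}}) - z^{\sigma(\boldsymbol{\hat{x}})}(\boldsymbol{\bar{x}})$, so the lemma follows once the auxiliary inequality is in hand.

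The remaining---and crucial---step is to show that hypothesis~\eqref{Lem:ConMAX} is exactly this auxiliary inequality. The key observation is that the scenarios $\phi(\boldsymbol{\bar{x}})$ and $\sigma(\boldsymbol{\hat{x}})$ assign the same cost to every index $j$ at which $\boldsymbol{\bar{x}}$ and $\boldsymbol{\hat{x}}$ disagree: if $\hat{x}_j=1,\ \bar{x}_j=0$ both scenarios set $c_j = c^-_j$, and if $\hat{x}_j=0,\ \bar{x}_j=1$ both set $c_j = c^+_j$. Writing each side of~\eqref{Lem:ConMAX} as a difference of inner products and noting that indices with $\hat{x}_j = \bar{x}_j$ contribute zero to $z^s(\boldsymbol{\hat{x}}) - z^s(\boldsymbol{\bar{x}})$ for any scenario $s$, I obtain the identity $z^{\phi(\boldsymbol{\bar{x}})}(\boldsymbol{\hat{x}}) - z^{\phi(\boldsymbol{\bar{x}})}(\boldsymbol{\bar{x}}) = z^{\sigma(\boldsymbol{\hat{x}})}(\boldsymbol{\hat{x}}) - z^{\sigma(\boldsymbol{\hat{x}})}(\boldsymbol{\bar{x}})$. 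Hence~\eqref{Lem:ConMAX} is equivalent to $z^{\sigma(\boldsymbol{\hat{x}})}(\boldsymbol{\hat{x}}) \ge z^{\sigma(\boldsymbol{\hat{x}})}(\boldsymbol{\bar{x}})$, which closes the argument.

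I expect the main obstacle to be spotting this coincidence of the two scenarios on the symmetric difference of the supports of $\boldsymbol{\bar{x}}$ and $\boldsymbol{\hat{x}}$; it is what lets the hypothesis, stated in terms of the best scenario of $\boldsymbol{\bar{x}}$, be converted into a statement about the worst-case scenario of $\boldsymbol{\hat{x}}$, which is the only scenario the regret of $\boldsymbol{\hat{x}}$ actually sees. The rest is bookkeeping, the one point requiring care being that $z^{\sigma(\boldsymbol{\hat{x}})}_*$ denotes the same scenario-optimal value in both regret expressions, so that it cancels cleanly in the subtraction.
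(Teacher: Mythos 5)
Your proof is correct, and it takes a genuinely different and shorter route than the paper's. The paper expands hypothesis~\eqref{Lem:ConMAX} into the index-set inequality $\sum_{j:\bar{x}_j=0,\hat{x}_j=1}c^-_j \ge \sum_{j:\bar{x}_j=1,\hat{x}_j=0}c^+_j$, substitutes it into $r_{\max}(\boldsymbol{\hat{x}})=z_*^{\sigma(\boldsymbol{\hat{x}})}-\sum_{j:\hat{x}_j=1}c_j^-$, then introduces an optimal solution $\boldsymbol{x^*}$ under $\sigma(\boldsymbol{\hat{x}})$ and carries out a case analysis over the joint values of $(x^*_j,\hat{x}_j,\bar{x}_j)$ to bound the result by $z^{\sigma(\boldsymbol{\bar{x}})}(\boldsymbol{x^*})-\sum_{j:\bar{x}_j=1}c_j^-\le z_*^{\sigma(\boldsymbol{\bar{x}})}-\sum_{j:\bar{x}_j=1}c_j^- = r_{\max}(\boldsymbol{\bar{x}})$; that is, it routes the comparison through the worst scenario of $\boldsymbol{\bar{x}}$. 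You instead observe that $\phi(\boldsymbol{\bar{x}})$ and $\sigma(\boldsymbol{\hat{x}})$ assign identical costs on the symmetric difference of the two supports (both give $c_j^-$ where $\hat{x}_j=1,\bar{x}_j=0$ and $c_j^+$ where $\hat{x}_j=0,\bar{x}_j=1$), so that $z^{\phi(\boldsymbol{\bar{x}})}(\boldsymbol{\hat{x}})-z^{\phi(\boldsymbol{\bar{x}})}(\boldsymbol{\bar{x}})=z^{\sigma(\boldsymbol{\hat{x}})}(\boldsymbol{\hat{x}})-z^{\sigma(\boldsymbol{\hat{x}})}(\boldsymbol{\bar{x}})$, and the chain $r_{\max}(\boldsymbol{\hat{x}})=z_*^{\sigma(\boldsymbol{\hat{x}})}-z^{\sigma(\boldsymbol{\hat{x}})}(\boldsymbol{\hat{x}})\le z_*^{\sigma(\boldsymbol{\hat{x}})}-z^{\sigma(\boldsymbol{\hat{x}})}(\boldsymbol{\bar{x}})=r^{\sigma(\boldsymbol{\hat{x}})}(\boldsymbol{\bar{x}})\le r_{\max}(\boldsymbol{\bar{x}})$ closes the argument without ever invoking $\boldsymbol{x^*}$ or comparing the two scenario optima $z_*^{\sigma(\boldsymbol{\hat{x}})}$ and $z_*^{\sigma(\boldsymbol{\bar{x}})}$. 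What your version buys is brevity and transparency --- the hypothesis is revealed to be exactly the statement that $\boldsymbol{\hat{x}}$ beats $\boldsymbol{\bar{x}}$ under $\boldsymbol{\hat{x}}$'s own worst scenario, which is the only scenario its maximum regret depends on --- and the same scenario-coincidence observation immediately yields the minimization variant in Lemma~\ref{lem:bst2-min}; the paper's computation, by contrast, makes the role of the interval endpoints explicit term by term but at the cost of a longer bookkeeping argument.
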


\begin{proof}
From~\eqref{Lem:ConMAX} we obtain the following inequalities:
\begin{align}
\nonumber\sum_{\twoline{j:\,\bar{x}_j=1,}{\ \,\hat{x}_j=1}}c^+_j + \sum_{\twoline{j:\,\bar{x}_j=0,}{\ \,\hat{x}_j=1}}c^-_j &\ge \sum_{j:\,\bar{x}_j=1}c_j^+\\
\label{LemProofMAX:Con}\sum_{\twoline{j:\,\bar{x}_j=0,}{\ \,\hat{x}_j=1}}c^-_j &\ge \sum_{\twoline{j:\,\bar{x}_j=1,}{\ \,\hat{x}_j=0}}c^+_j.
\end{align}
From~\eqref{def:rmax} we get
\begin{align}
\label{LemProofMAX:rmax} r_{\max}(\boldsymbol{\hat{x}}) = z_*^{\sigma(\boldsymbol{\hat{x}})} - z^{\sigma(\boldsymbol{\hat{x}})}(\boldsymbol{\hat{x}}) = z_*^{\sigma(\boldsymbol{\hat{x}})} - \sum_{j:\,\hat{x}_j=1}c_j^-
= z_*^{\sigma(\boldsymbol{\hat{x}})} - \sum_{\twoline{j:\,\bar{x}_j=0,}{\ \,\hat{x}_j=1}}c_j^- - \sum_{\twoline{j:\,\bar{x}_j=1,}{\ \,\hat{x}_j=1}}c_j^-.
\end{align}
From~\eqref{LemProofMAX:Con} we obtain
\begin{align}
\label{LemProofMAX:rmax2} r_{\max}(\boldsymbol{\hat{x}}) &\le z_*^{\sigma(\boldsymbol{\hat{x}})} - \sum_{\twoline{j:\,\bar{x}_j=1,}{\ \,\hat{x}_j=0}}c_j^+ - \sum_{\twoline{j:\,\bar{x}_j=1,}{\ \,\hat{x}_j=1}}c_j^-
= z_*^{\sigma(\boldsymbol{\hat{x}})} - \sum_{\twoline{j:\,\bar{x}_j=1,}{\ \,\hat{x}_j=0}}\left(c_j^+ - c_j^-\right) - \sum_{j:\,\bar{x}_j=1}c_j^-.
\end{align}

Letting $x^*$ be an optimal solution to the BIP under $\sigma(\boldsymbol{\hat{x}})$, inequality~\eqref{LemProofMAX:rmax2} can be rewritten as
\begin{align*}
 r_{\max}(\boldsymbol{\hat{x}}) &\le  \sum_{\twoline{j:\,x^*_j=1,}{\ \,\hat{x}_j=1}}c_j^- + \sum_{\twoline{j:\,x^*_j=1,}{\ \,\hat{x}_j=0}}c_j^+ - \sum_{\twoline{j:\,\bar{x}_j=1,}{\ \,\hat{x}_j=0}}\left(c_j^+ - c_j^-\right) - \sum_{j:\,\bar{x}_j=1}c_j^-\\
 &=  \sum_{\twoline{j:\,x^*_j=1,}{\ \,\bar{x}_j=1}}c_j^- + 
\sum_{\twoline{j:\,x^*_j=1,}{\twoline{\ \,\hat{x}_j=0,}{\ \bar{x}_j=0}}}c_j^+ +
\sum_{\threeline{j:\,x^*_j=1,}{\ \,\hat{x}_j=1,}{\ \bar{x}_j=0}}c_j^- +
\sum_{\threeline{j:\,x^*_j=0,}{\ \,\hat{x}_j=0,}{\ \bar{x}_j=1}}\left(c_j^- - c_j^+\right) -
\sum_{j:\,\bar{x}_j=1}c_j^-\\
 &\le  \sum_{\twoline{j:\,x^*_j=1,}{\ \,\bar{x}_j=1}}c_j^- + 
\sum_{\twoline{j:\,x^*_j=1,}{\twoline{\ \,\hat{x}_j=0,}{\ \bar{x}_j=0}}}c_j^+ +
\sum_{\threeline{j:\,x^*_j=1,}{\ \,\hat{x}_j=1,}{\ \bar{x}_j=0}}c_j^+ -
\sum_{j:\,\bar{x}_j=1}c_j^-\\
 &= z^{\sigma(\boldsymbol{\bar{x}})}(\boldsymbol{x^*}) - \sum_{j:\,\bar{x}_j=1}c_j^- \le z_*^{\sigma(\boldsymbol{\bar{x}})} - \sum_{j:\,\bar{x}_j=1}c_j^- = r_{\max}(\boldsymbol{\bar{x}}).
\end{align*}
\end{proof}

It is not difficult to adapt the proof of the previous lemma to show that the property also holds if the original BIP is a minimization problem i.e.,
\begin{lemma}\label{lem:bst2-min}
For any feasible pair of solutions $\boldsymbol{\bar{x}},\boldsymbol{\hat{x}}\in X_0$ of problem~\eqref{std:MinObj}, \eqref{std:InqCon}, \eqref{std:IteCon},
if
\begin{align}\label{Lem:ConMIN}
z^{\sigma(\boldsymbol{\bar{x}})}(\boldsymbol{\hat{x}}) \le z^{\sigma(\boldsymbol{\bar{x}})}(\boldsymbol{\bar{x}})
\end{align}
holds, then we have $r_{\max}(\boldsymbol{\hat{x}})\le r_{\max}(\boldsymbol{\bar{x}})$.
\end{lemma}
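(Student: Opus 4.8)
The plan is to mirror the proof of Lemma~\ref{lem:bst2}, exchanging the two scenario maps and reversing every inequality to account for the switch from maximization to minimization. First I would record the objects that change. By the note following Lemma~\ref{lem:bst1}, the best scenario of a solution is now $\sigma(\cdot)$ rather than $\phi(\cdot)$; dually, the worst-case scenario attaining the maximum regret becomes $\phi(\cdot)$, so that $r_{\max}(\boldsymbol{x}) = z^{\phi(\boldsymbol{x})}(\boldsymbol{x}) - z^{\phi(\boldsymbol{x})}_*$, where now $z^s_* = \min_{\boldsymbol{y}\in X_0} z^s(\boldsymbol{y})$ is a minimum. Here $z^{\phi(\boldsymbol{x})}(\boldsymbol{x}) = \sum_{j:\,x_j=1} c_j^+$, since $\phi(\boldsymbol{x})$ assigns $c_j^+$ to the selected items.

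Next I would expand the hypothesis~\eqref{Lem:ConMIN}. Writing $z^{\sigma(\boldsymbol{\bar{x}})}(\boldsymbol{\hat{x}}) = \sum_{j:\,\hat{x}_j=1,\,\bar{x}_j=1} c_j^- + \sum_{j:\,\hat{x}_j=1,\,\bar{x}_j=0} c_j^+$ and $z^{\sigma(\boldsymbol{\bar{x}})}(\boldsymbol{\bar{x}}) = \sum_{j:\,\bar{x}_j=1} c_j^-$, then cancelling the common terms over $\{j:\bar{x}_j=\hat{x}_j=1\}$, the hypothesis reduces to the single inequality $\sum_{j:\,\bar{x}_j=0,\,\hat{x}_j=1} c_j^+ \le \sum_{j:\,\bar{x}_j=1,\,\hat{x}_j=0} c_j^-$. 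This is exactly~\eqref{LemProofMAX:Con} with $c^+$ and $c^-$ interchanged and the inequality reversed, which is the reason the whole argument transfers.

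I would then start from $r_{\max}(\boldsymbol{\hat{x}}) = \sum_{j:\,\hat{x}_j=1} c_j^+ - z^{\phi(\boldsymbol{\hat{x}})}_*$, split the sum over $\{\bar{x}_j=0\}$ and $\{\bar{x}_j=1\}$, and use the reduced hypothesis to replace the term $\sum_{j:\,\bar{x}_j=0,\,\hat{x}_j=1} c_j^+$ by the larger $\sum_{j:\,\bar{x}_j=1,\,\hat{x}_j=0} c_j^-$. Introducing an optimal (minimum-cost) solution $\boldsymbol{x^*}$ of the BIP under $\phi(\boldsymbol{\hat{x}})$ and substituting $z^{\phi(\boldsymbol{\hat{x}})}_* = z^{\phi(\boldsymbol{\hat{x}})}(\boldsymbol{x^*})$ gives an explicit upper bound on $r_{\max}(\boldsymbol{\hat{x}})$, which I want to compare against $\sum_{j:\,\bar{x}_j=1} c_j^+ - z^{\phi(\boldsymbol{\bar{x}})}(\boldsymbol{x^*})$.

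The final and most delicate step is this termwise comparison. I would partition $J$ according to the triple $(\bar{x}_j,\hat{x}_j,x^*_j)$ and check, contribution by contribution, that the bound does not exceed $\sum_{j:\,\bar{x}_j=1} c_j^+ - z^{\phi(\boldsymbol{\bar{x}})}(\boldsymbol{x^*})$, every surviving discrepancy being of the form $c_j^- - c_j^+ \le 0$. The proof then closes using only the optimality of $\boldsymbol{x^*}$ for minimization, namely $z^{\phi(\boldsymbol{\bar{x}})}(\boldsymbol{x^*}) \ge z^{\phi(\boldsymbol{\bar{x}})}_*$, so that $\sum_{j:\,\bar{x}_j=1} c_j^+ - z^{\phi(\boldsymbol{\bar{x}})}(\boldsymbol{x^*}) \le \sum_{j:\,\bar{x}_j=1} c_j^+ - z^{\phi(\boldsymbol{\bar{x}})}_* = r_{\max}(\boldsymbol{\bar{x}})$. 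I expect the main obstacle to be purely bookkeeping: keeping the orientation of every inequality and the role of each $c^\pm$ consistent through the max-to-min sign flip, since a single mismatched bound would break the termwise argument.
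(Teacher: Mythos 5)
Your proposal is correct and is essentially the adaptation the paper itself intends: the paper gives no separate proof of Lemma~\ref{lem:bst2-min}, stating only that the proof of Lemma~\ref{lem:bst2} can be adapted, and your swap of $\sigma$ and $\phi$, the reversed reduced hypothesis $\sum_{j:\,\bar{x}_j=0,\,\hat{x}_j=1}c_j^+\le\sum_{j:\,\bar{x}_j=1,\,\hat{x}_j=0}c_j^-$, the termwise comparison over the partition by $(\bar{x}_j,\hat{x}_j,x^*_j)$, and the closing use of $z^{\phi(\boldsymbol{\bar{x}})}(\boldsymbol{x^*})\ge z^{\phi(\boldsymbol{\bar{x}})}_*$ are exactly that adaptation. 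All the sign flips check out, so the argument goes through as you describe.
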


\subsection{Iterated Dual Substitution}\label{sec:iDS}

Computational experiments on specific min--max regret optimization problems (see, e.g., \citealp{FIMY15}, \citealp{WIMY18}) have shown that the DS heuristic performs better than the fixed scenario algorithm in most cases, but it can, in some cases, produce unsatsfactory solutions. 
To overcome this, we propose an iterative method, which we call the \emph{iterated dual substitution} (iDS), that improves the performance of the DS heuristic through iterative application, 
excluding from the search space the solutions that were already checked in previous iterations.

We define $\hat{X}$ ($\subseteq X_0$) as the set of already obtained feasible solutions,
and denote by \mbox{D-MMR-BIP($\hat{X}$)} the problem obtained by adding constraints based on $\hat{X}$ to the \mbox{D-MMR-BIP}~\eqref{form:idsObj}--\eqref{form:idsEnd} in the way described below.
The iDS algorithm starts with an empty set $\hat{X}=\emptyset$.
It then solves the \mbox{D-MMR-BIP($\hat{X}$)}, obtaining a solution $\boldsymbol{\hat{x}}$, and evaluates its maximum regret $r_{\max}(\boldsymbol{\hat{x}})$.
Since the \mbox{D-MMR-BIP($\hat{X}$)} contains all constraints of the original BIP,
any feasible solution $\boldsymbol{\hat{x}}$ to the \mbox{D-MMR-BIP($\hat{X}$)} is also feasible to the \mbox{MMR-BIP}.
At each iteration, iDS adds $\boldsymbol{\hat{x}}$ to $\hat{X}$ and solves the updated \mbox{D-MMR-BIP($\hat{X}$)}.
This process is repeated until the search space of \mbox{D-MMR-BIP($\hat{X}$)} becomes empty, or a termination condition (here, a time limit) is satisfied.

To remove already checked feasible regions, we consider two kinds of constraints based on $\hat{X}$: a Hamming-distance constraint and a best-scenario constraint.

\subsubsection*{Hamming-Distance Constraint.}\label{sec:HDC}
We consider the constraint
\begin{align}
\label{DistCon}	&\sum_{j:\,\hat{x}_j=0}x_j+\sum_{j:\,\hat{x}_j=1}\left(1-x_j\right)\ge d	&\forall \boldsymbol{\hat{x}} \in \hat{X},
\end{align}
where $d$\ ($\ge$1) is an integer parameter. 
Any solution $x \in X_0$ that satisfies \eqref{DistCon} for a solution $\boldsymbol{\hat{x}}$ has a Hamming distance larger than or equal to $d$ from $\boldsymbol{\hat{x}}$.
Our aim is to find a good solution to the \mbox{D-MMR-BIP} that has a Hamming distance greater than or equal to $d$ for every solution in $\hat{X}$, and hence  
solutions within a distance less than $d$ from a solution in $\hat{X}$ are removed from the feasible region.


Note that Hamming-distance constraints~\eqref{DistCon} are also valid when the original BIP is a minimization problem.

\subsubsection*{Best-Scenario Constraint.}\label{sec:BSC}
Lemma~\ref{lem:bst2} indicates that a feasible solution $\boldsymbol{\hat{x}}$ dominates all solutions in
\begin{align}\label{dominate}
\left\{ \boldsymbol{x}\in X_0 \ \big|\  z^{\phi(\boldsymbol{x})}(\boldsymbol{\hat{x}}) \ge z^{\phi(\boldsymbol{x})}(\boldsymbol{x}) \right\}.
\end{align}
For every solution $\boldsymbol{\hat{x}} \in \hat{X}$, we consider a constraint for removing those solutions that are dominated by $\boldsymbol{\hat{x}}$, namely,
\begin{align}
\label{LemCon}
z^{\phi(\boldsymbol{x})}(\boldsymbol{\hat{x}}) < z^{\phi(\boldsymbol{x})}(\boldsymbol{x})&&\forall \boldsymbol{\hat{x}} \in \hat{X}.
\end{align}
Note that the constraint~\eqref{LemCon} of each $\boldsymbol{\hat{x}}$ excludes $\boldsymbol{\hat{x}}$ itself from the feasible region.
The linear expression of~\eqref{LemCon} is
\begin{align}
\nonumber\sum_{j:\,\hat{x}_j=1} \left( c^+_j x_j - c^-_j x_j + c^-_j \right) < \sum_{j=1}^n c^+_j x_j &&\forall \boldsymbol{\hat{x}} \in \hat{X}\\
\label{LenConLinear}\sum_{j:\,\hat{x}_j=1}c^-_j x_j + \sum_{j:\,\hat{x}_j=0} c^+_j x_j  > \sum_{j:\,\hat{x}_j=1} c^-_j &&\forall \boldsymbol{\hat{x}} \in \hat{X}.
\end{align}

Given a feasible solution set $\hat{X}$, best-scenario constraints~\eqref{LenConLinear} dominate Hamming-distance constraints when $d = 1$,
because Hamming-distance constraints with $d=1$ can only exclude all solutions in $\hat{X}$ from the search space.

If the original BIP is a minimization problem, the constraints corresponding to~\eqref{LenConLinear} become
\begin{align}
\label{LenConLinearMin}\sum_{j:\,\hat{x}_j=1}c^+_j x_j + \sum_{j:\,\hat{x}_j=0} c^-_j x_j  < \sum_{j:\,\hat{x}_j=1} c^+_j &&\forall \boldsymbol{\hat{x}} \in \hat{X}.
\end{align}
If all the values of $c^-_j$ and $c^+_j$ are integers, we can replace \eqref{LenConLinear} and~\eqref{LenConLinearMin}, respectively, with 
\begin{align*}
\sum_{j:\,\hat{x}_j=1}c^-_j x_j + \sum_{j:\,\hat{x}_j=0} c^+_j x_j  \ge \sum_{j:\,\hat{x}_j=1} c^-_j + 1 &&\forall \boldsymbol{\hat{x}} \in \hat{X},
\end{align*}
and
\begin{align*}
\sum_{j:\,\hat{x}_j=1}c^+_j x_j + \sum_{j:\,\hat{x}_j=0} c^-_j x_j  \le \sum_{j:\,\hat{x}_j=1} c^+_j - 1 &&\forall \boldsymbol{\hat{x}} \in \hat{X}.
\end{align*}

Given sufficient computation time and memory space, the iDS algorithm using best-scenario constraints is theoretically an exact approach for the \mbox{MMR-BIP},
because the newly excluded search space is non-empty in each iteration, and all excluded solutions are guaranteed to be no better than the candidate solution according to Lemma~\ref{lem:bst2}.

\subsection{Local Exact Subroutine}\label{sec:BCR}
Note that if we set $d = 1$, the iDS approach using Hamming-distance constraints at every iteration only excludes from the search space
those $\boldsymbol{\hat{x}}$ solutions that have already been checked during the search.
When $d \ge 2$, the iDS approach also removes from the search space unchecked solutions around $\boldsymbol{\hat{x}}$.
The unchecked space around a solution $\boldsymbol{\hat{x}}$ can be defined as
\begin{align}
\label{EXDistCon}
1\le \sum_{j:\,\hat{x}_j=0}x_j+\sum_{j:\,\hat{x}_j=1}\left(1-x_j\right)\le d-1.
\end{align}

When $d \ge 2$, we consider an option that exactly solves,
whenever a solution $\boldsymbol{\hat{x}}$ is obtained by solving \mbox{D-MMR-BIP($\hat{X}$)},
the \mbox{MMR-BIP} described in~\eqref{form:mmrbipBasic} with the additional constraint~\eqref{EXDistCon} only for the newly obtained $\boldsymbol{\hat{x}}$, which we call the \mbox{MMR-BIP}($\boldsymbol{\hat{x}}$).
Taking this option, the iDS algorithm using Hamming-distance constraints is guaranteed to find an exact optimal solution for the \mbox{MMR-BIP} when computation time and memory space are sufficient to run until the \mbox{D-MMR-BIP($\hat{X}$)} search space becomes empty.

The branch-and-cut approach described in Section~\ref{sec:BC} can be used to solve the \mbox{MMR-BIP}($\boldsymbol{\hat{x}}$).

\section{Test Problems} \label{sec:TP}
To test the performance of the proposed algorithms for binary integer programming problems,
 we selected four representative combinatorial optimization problems: two maximization problems (the knapsack problem and the multidimensional knapsack problem) and two minimization problems (the set covering problem and the generalized assignment problem).
All these problems are known to be NP hard, and the corresponding problems under min--max regret criteria are $\Sigma^p_2$ hard.
We describe the original problems in this section.
The corresponding problems under min--max regret criteria can be defined using the definitions in Sections~\ref{sec:MMRC}--\ref{sec:MIP}.
\subsection{Knapsack Problem} \label{sec:KP}
The \emph{knapsack problem} (KP) is defined as follows.
Given a set $J$ of $n$ items $\{1, 2, \ldots, n\}$,
each item $j$ having weight $a_j$ and value $c_j$,
choose items maximizing the total value such that the total weight is less than or equal to the knapsack capacity $b$.

The KP can be formulated over binary variables $x_j$,
indicating that item $j$ is chosen if and only if $x_j=1$:
\begin{align}
\max\ &\sum^n_{j=1}c_jx_j\\
\label{KP:CapCon}{\text s.\text t.}\ \ &\sum^n_{j=1}a_jx_j\le b\\
\label{KP:IteCon}&x_j\in \{0,1\} &\forall j \in J.
\end{align}

The KP has been thoroughly studied due to its simple structure and important practical applications (see~\citealp{KPP04}, and~\citealp{MT90}).
\subsection{Multidimensional Knapsack Problem} \label{sec:MKP}
The \emph{multidimensional knapsack problem} (MKP) is an extension of the KP.
Given a set $J$ of $n$ items $\{1, 2, \ldots, n\}$ and a set $I$ of $m$ multiple-resource restraints $\{1, 2, \ldots, m\}$,
choose items with maximum total value, subject to satisfying every resource restraint.
Choosing item $j$ provides value $c_j$ and consumes an amount $a_{ij}$ of resource in dimension $i$, 
with total resource availability (capacity) along dimension $i$ being $b_{i}$. 

The MKP can be formulated over binary variables $x_j$,
indicating that item $j$ is chosen if and only if $x_j=1$:
\begin{align}
\max\ &\sum^n_{j=1}c_jx_j\\
\label{MKP:CapCon}{\text s.\text t.}\ \ &\sum^n_{j=1}a_{ij}x_j\le b_{i} &\forall i \in I,\\
\label{MKP:IteCon}&x_j\in \{0,1\} &\forall j \in J.
\end{align}

The MKP is known to be computationally much harder than the KP, even for $m=2$ \citep[see][]{KS2010}.
The problem has been thoroughly studied over many decades due to its theoretical interest and its broad applications in several fields,
including cargo loading, cutting stock, bin-packing, finance, and management issues (see \citealp{LNAA18} and \citealp{PRP10}).

\subsection{Set Covering Problem} \label{sec:SCP}
The \emph{set covering problem} (SCP) is defined as follows.
Given a set $I$ of $n$ items $\{1, 2, \ldots, n\}$ and $m$ subsets of $I$, each subset $j$ having a cost $c_j$,
select subsets with minimum cost such that each item is covered by at least one selected subset. 
Letting $a_{ij}$ be a binary value that equals 1 when subset $j$ covers item $i$ and $0$ otherwise,
the SCP can be formulated over binary variables $x_j$,
indicating that subset $j$ is chosen if and only if $x_j=1$:
\begin{align}
\min\ &\sum^n_{j=1}c_jx_j\\
\label{SCP:CoverCon}{\text s.\text t.}\ \ &\sum^n_{j=1}a_{ij}x_j\ge 1 &\forall i \in I,\\
\label{IteCon}&x_j\in \{0,1\} &\forall j \in J.
\end{align}

The decision version of the SCP is one of Karp's 21 NP-complete problems \citep[see][]{K72}.
There are many real-world applications involving this type of problem, including facility location \citep[see][]{FAHHG12}, airline crew scheduling \citep[see][]{WJJASU16}, and vehicle routing \citep[see][]{CHT14}.
\subsection{Generalized Assignment Problem} \label{sec:GAP}
The \emph{generalized assignment problem} (GAP) is defined as follows.
Given a set $J$ of $n$ jobs $\{1, 2, \ldots, n\}$ and a set $I$ of $m$ agents $\{1, 2, \ldots, m\}$, 
look for a minimum cost assignment, subject to assigning each job to exactly one agent and 
satisfying a resource constraint for each agent. Assigning job $j$ to agent $i$ 
incurs a cost $c_{ij}$ and consumes an amount $a_{ij}$ of resource, 
where the total amount of the resource available for agent $i$ (the agent capacity) is $b_{i}$. 

A natural formulation of the GAP is defined over a two-dimensional binary variable $x_{ij}$,
indicating that job $j$ is assigned to agent $i$ if and only if $x_{ij}=1$:
\begin{align}
\min\ &\sum^m_{i=1}\sum^n_{j=1}c_{ij}x_{ij}\\
\label{GAP:CapCon}
{\text s.\text t.}\ \ &\sum^n_{j=1}a_{ij}x_{ij}\le b_{i} &\forall i \in I,\\
\label{GAP:SemiCon}
&\sum^m_{i=1}x_{ij}=1 &\forall j \in J,\\
\label{GAP:IteCon}
&x_{ij}\in \{0,1\}&\forall i \in I,\ \forall j \in J.
\end{align}

The GAP has many real-world applications,
including production planning, scheduling, timetabling, telecommunication, investment, and transportation (see \citealp{FJ81}, \citealp{R99}, and \citealp{WYI18}).
\section{Computational Experiments} \label{sec:CE}
Let \mbox{MMR-KP}, \mbox{MMR-MKP}, \mbox{MMR-SCP}, and \mbox{MMR-GAP} denote respectively the KP, the MKP, the SCP, and the GAP under the min--max regret criterion.
The following sections present computational results for these four resulting problems.
\subsection{Implementation Details} \label{sec:ID}
All experiments were carried out on a PC with a Xeon E-2144G CPU running at 3.60~GHz, with 64~GB memory.
All computations were performed on a single CPU core.
We used Gurobi version 8.1 to solve the mixed integer linear programming problems.
Detailed results for the four considered problems are presented in four appendices and are available online at \url{http://www.co.mi.i.nagoya-u.ac.jp/~yagiura/ids/}.

Concerning iDS using Hamming-distance constraints,
we used the constraints~\eqref{DistCon} with $d=1$ for all tested problems,
because preliminary experiments indicated that this value gives better results than do larger values of $d$.
All algorithms were given a time limit of 1~CPU hour per instance.
\subsection{Min--Max Regret Multidimensional Knapsack Problem} \label{sec:RMKP}
To the best of our knowledge, this is the first study on the \mbox{MMR-MKP}.
To generate \mbox{MMR-MKP} instances, we used the classical MKP benchmarks of~\citet{CB98} with $(m,n)\in\{(5, 100),(10,100),(5, 250)\}$.
There are 30 instances for each $(m, n)$ pair.
For each MKP instance, we generated the extremes of the value intervals $c^-_j$ and $c^+_j$ as random integers uniformly distributed in $[(1-\delta)c_j, c_j]$ and $[c_j, (1+\delta)c_j]$, respectively, with $\delta \in \{0.1, 0.2, 0.3\}$.
The 270 generated instances are available at \url{http://www.co.mi.i.nagoya-u.ac.jp/~yagiura/mmr-mkp/}.
 
\begin{table}[htbp]
\centering \fontsize{6.5pt}{11pt}\selectfont
\setlength{\tabcolsep}{3pt}
\caption{MMR-MKP results}
\label{tbl:mkp}
\begin{tabular}{ll*{22}{r}}
\hline
                             && \multicolumn{4}{c}{B\&C}                  && \multicolumn{3}{c}{Fix}                  && \multicolumn{3}{c}{DS}                  && \multicolumn{4}{c}{iDS-H}                                                                  && \multicolumn{4}{c}{iDS-B}                                                                      \\ \cline{3-6} \cline{8-10} \cline{12-14} \cline{16-19} \cline{21-24}  
\multicolumn{1}{c}{instances} && \multicolumn{1}{c}{time} & \multicolumn{1}{c}{\%gap} & \multicolumn{1}{c}{\#opt} & \multicolumn{1}{c}{\#best} && \multicolumn{1}{c}{time} & \multicolumn{1}{c}{\%gap} & \multicolumn{1}{c}{\#best} && \multicolumn{1}{c}{time} & \multicolumn{1}{c}{\%gap} & \multicolumn{1}{c}{\#best} &&
\multicolumn{1}{c}{time}  & \multicolumn{1}{c}{iter} & \multicolumn{1}{c}{\%gap} & \multicolumn{1}{c}{\#best} && \multicolumn{1}{c}{time} & \multicolumn{1}{c}{iter} & \multicolumn{1}{c}{\%gap} & \multicolumn{1}{c}{\#best}  \\ \bhline{1.5pt}
05-100-10	&&	109.40 	&	\textbf{0.00} 	&	30	&	\textbf{30}	&&	1.17 	&	3.92 	&	13	&&	1.68 	&	1.55 	&	18	&&	3.71 	&	1.77 	&	\textbf{0.00} 	&	\textbf{30}	&&	3.74 	&	1.77 	&	\textbf{0.00} 	&	\textbf{30}	\\
05-100-20	&&	755.31 	&	2.32 	&	26	&	28	&&	0.92 	&	3.49 	&	18	&&	3.58 	&	2.37 	&	25	&&	5.40 	&	1.27 	&	\textbf{2.18} 	&	\textbf{30}	&&	5.32 	&	1.27 	&	\textbf{2.18} 	&	\textbf{30}	\\
05-100-30	&&	1415.65 	&	11.71 	&	7	&	18	&&	0.86 	&	11.85 	&	15	&&	13.14 	&	10.84 	&	19	&&	24.63 	&	1.53 	&	\textbf{10.62} 	&	\textbf{30}	&&	24.95 	&	1.53 	&	\textbf{10.62} 	&	\textbf{30}	\\
10-100-10	&&	1603.19 	&	9.79 	&	24	&	26	&&	8.88 	&	12.61 	&	14	&&	11.09 	&	12.20 	&	23	&&	33.56 	&	1.77 	&	\textbf{8.53} 	&	\textbf{30}	&&	32.26 	&	1.77 	&	\textbf{8.53} 	&	\textbf{30}	\\
10-100-20	&&	2609.02 	&	34.80 	&	6	&	7	&&	7.70 	&	31.39 	&	9	&&	21.12 	&	31.54 	&	20	&&	63.05 	&	1.57 	&	\textbf{29.83} 	&	\textbf{30}	&&	61.93 	&	1.57 	&	\textbf{29.83} 	&	\textbf{30}	\\
10-100-30	&&	1897.73 	&	37.69 	&	2	&	6	&&	7.67 	&	34.45 	&	13	&&	113.41 	&	33.63 	&	20	&&	343.83 	&	1.70 	&	\textbf{33.42} 	&	\textbf{29}	&&	350.37 	&	1.70 	&	\textbf{33.42} 	&	\textbf{29}	\\
05-250-10	&&	2752.21 	&	58.59 	&	0	&	1	&&	24.16 	&	45.10 	&	14	&&	181.55 	&	46.94 	&	21	&&	238.91 	&	1.47 	&	\textbf{44.40} 	&	\textbf{30}	&&	238.16 	&	1.47 	&	\textbf{44.40} 	&	\textbf{30}	\\
05-250-20	&&	2379.45 	&	59.58 	&	0	&	0	&&	17.98 	&	43.43 	&	13	&&	2197.58 	&	43.33 	&	24	&&	2258.67 	&	1.17 	&	\textbf{43.21} 	&	\textbf{26}	&&	2255.03 	&	1.17 	&	\textbf{43.21} 	&	\textbf{26}	\\
05-250-30	&&	2550.39 	&	53.29 	&	0	&	0	&&	17.66 	&	39.94 	&	16	&&	3353.68 	&	\textbf{39.82} 	&	\textbf{23}	&&	3353.68 	&	1.00 	&	\textbf{39.82} 	&	\textbf{23}	&&	3353.68 	&	1.00 	&	\textbf{39.82} 	&	\textbf{23}	\\
\hline																																					
	&&	1785.82 	&	29.75 	&	95 	&	116 	&&	9.67 	&	25.13 	&	125 	&&	655.20 	&	24.69 	&	216 	&&	702.83 	&	1.47 	&	\textbf{23.56} 	&	\textbf{258} 	&&	702.83 	&	1.47 	&	\textbf{23.56} 	&	\textbf{258} 	\\
\bhline{1pt}
\end{tabular}
\end{table}

Table~\ref{tbl:mkp} shows the results of the algorithms described in Sections~\ref{sec:CH} and~\ref{sec:iDSH}:
the branch-and-cut algorithm (``B\&C''),  the fixed-scenario algorithm (``Fix''), the DS algorithm (``DS''), the iDS algorithm with Hamming-distance constraints (``\mbox{iDS-H}''), and iDS with the best-scenario constraints (``\mbox{iDS-B}'').
A row ``\mbox{$ww$-$xxx$-$yy$}'' refers to the 30 instances generated  with $m=ww$, $n=xxx$, and $100 \delta = yy$. For the branch-and-cut algorithm,
the entries show
the average CPU time in seconds required to obtain the best solution (``time''),
the average percentage gap between the solution value and the best lower-bound value (``\%gap''),
the number of instances solved to proven optimality (``\#opt''),
and the number of instances for which the solution value obtained was the best among the tested algorithms (``\#best''). 
Since the fixed-scenario approach under the median-value scenario is a \mbox{2-approximation} algorithm (see Lemma~\ref{LemMidPnt}), half of its solution value is a valid lower bound.
The lower-bound value used to compute the average percentage optimality gap was the best between such value and the lower bound produced by the branch-and-cut.
For \mbox{iDS-H} and \mbox{iDS-B}, we provide the average number of iterations (``iter'') required to obtain the best solution.
Values in bold indicate that the corresponding algorithm(s) obtained the best results among the tested algorithms.
The final line of the table reports the overall average and total values over the 270 instances.

The branch-and-cut algorithm exactly solved
63 instances with $m=5$ and $n=100$,
32 with $m=10$ and $n=100$,
and 0 with $m=5$ and $n=250$.
In total, it only solved 95 out of 270 instances to proven optimality, and failed to obtain solutions with low optimality gaps for all instances with $m=5$ and $n=250$.
Focusing on instances with same $m$ and $n$ values,
we observe that it becomes hard to obtain optimal solutions or even good lower bounds for instances with large $\delta$ values,
resulting in large ``gap" column values.
This indicates that an \mbox{MMR-MKP} instance with the tested size should be preferably handled through heuristic algorithms.

The fixed-scenario approach under the median-value scenario obtained solutions within 25~seconds on average even for the most hard instances with $m = 5$ and $n=250$.
It produced solutions with good optimality gaps for instances with $m=5$ and $n=100$.

The DS algorithm obtained all best solutions within 20~minutes on average.
However, for 37 out of 60 instances with $(m,n)=(5,250)$ and $\delta\ge 0.2$, the DS algorithm failed to complete within the time limit (see appendix~\ref{Asec:MKP}).


By comparing the five heuristics (the branch-and-cut algorithm can be also used as a heuristic by imposing a time limit),
we can observe that the fixed-scenario approach under the median-value scenario obtained solutions within 10~seconds on average for all tested instances.
Concerning the solution quality,
the two iDS algorithms obtained the best solution values for 258 out of 270 instances,
while branch-and-cut, fixed-scenario approach, and the DS algorithm obtained the best solution values for 116, 125, and 216 instances, respectively.
The iDS algorithms obtained optimal values for all instances with proven optimality,
and they dominated the other algorithms for all instances with $m=5$ and $n=100$.
For instances where the branch-and-cut and iDS algorithms obtained solutions with the same value,
the iDS algorithms used smaller CPU times.
However, the two iDS algorithms failed to obtain the best solution for 11 instances with $(m,n)=(5,250)$ and $\delta\ge 0.2$.
These failures occurred because the computation of each iteration was very expensive, and even the first iteration was not completed within the time limit in most cases.

We finally observe that there was no clear winner between \mbox{iDS-B} and \mbox{iDS-H}. They always obtained the best solutions at the same iteration with similar CPU times.

We refer to appendix~\ref{Asec:MKP} for more details on the computational results for the \mbox{MMR-MKP}.

\subsection{Min--Max Regret Knapsack Problem} \label{sec:RKP}
For the \mbox{MMR-KP}, 
we used the benchmark instances generated by \citet{FIMY15}
(available at \url{http://www.or.deis.unibo.it/research_pages/ORinstances/MRKP_instances.zip}\,).
They first generated 18 KP instances of nine types using a parameter $\bar{R}\in \{1000, 10000\}$ as follows (u.d. stands for ``uniformly distributed''):
\begin{enumerate}
    \item{\bf Uncorrelated}: $a_j$ and $c_j$ are random integers u.d. in $[1, \bar{R}]$.
    \item{\bf Weakly correlated}: $a_j$ is a random integer u.d. in $[1, \bar{R}]$, $c_j$ is a random integer u.d. in $[a_j-\bar{R}/10, a_j+\bar{R}/10]$ so that $c_j \ge 1$.
    \item{\bf Strongly correlated}: $a_j$ is a random integer u.d. in $[1, \bar{R}]$, $c_j=a_j+\bar{R}/10$.
    \item{\bf Inverse strongly correlated}: $c_j$ is a random integer u.d. in $[1, \bar{R}]$, $a_j=c_j+\bar{R}/10$.
    \item{\bf Almost strongly correlated}: $a_j$ is a random integer u.d. in $[1, \bar{R}]$, $c_j$ is a random integer u.d. in $[a_j+\bar{R}/10-\bar{R}/500, a_j+\bar{R}/10+\bar{R}/500]$.
    \item{\bf Subset sum}: $a_j$ is a random integer u.d. in $[1, \bar{R}]$, $c_j=a_j$.
    \item{\bf Even--odd subset sum}: $a_j$ is an even random integer u.d. in $[1, \bar{R}]$, $c_j=a_j$, $b$ is odd.
    \item{\bf Even--odd strongly correlated}: $a_j$ is an even random integer u.d. in $[1, \bar{R}]$, $c_j=a_j+\bar{R}/10$, $b$ is odd.
    \item{\bf Uncorrelated with similar weights}: $a_j$ is a random integer u.d. in $[100\bar{R},100\bar{R}+\bar{R}/10]$, $c_j$ is a random integer u.d. in $[1, \bar{R}]$.
\end{enumerate}
Then, for each KP instance, 27 \mbox{MMR-KP} instances were generated by considering all combinations of
\begin{itemize}
    \item $n\in \{50,60,70\}$;
    \item $b= \lfloor\gamma\sum_{j=1}^na_j\rfloor$, with tightness ratio $\gamma\in \{0.45, 0.5, 0.55\}$ ($b$ was increased by 1 if its value was even for classes 7 and 8);
    \item $c^-_j$ and $c^+_j$ set as random integers u.d. in $[(1-\delta)c_j, c_j]$ and $[c_j, (1+\delta)c_j]$, respectively, with $\delta \in \{0.1, 0.2, 0.3\}$,
\end{itemize}
obtaining in total 486 \mbox{MMR-KP} instances.

\begin{table}[htbp]
\centering
\fontsize{9pt}{10.5pt}\selectfont
\setlength{\tabcolsep}{4.5pt}
\caption{MMR-KP results}
\label{tbl:kp}
\begin{tabular}{cl*{16}{r}}
\hline
                             && \multicolumn{2}{c}{Best Known}                  && \multicolumn{3}{c}{DS}                                                                  && \multicolumn{4}{c}{iDS-H}                                                                  && \multicolumn{4}{c}{iDS-B}                                                                      \\ \cline{3-4} \cline{6-8} \cline{10-13} \cline{15-18} 
\multicolumn{1}{c}{instances} && \multicolumn{1}{c}{\%gap} & \multicolumn{1}{c}{\#opt} && \multicolumn{1}{c}{time} & \multicolumn{1}{c}{\%gap} & \multicolumn{1}{c}{\#b(w)} && \multicolumn{1}{c}{time} & \multicolumn{1}{c}{iter} & \multicolumn{1}{c}{\%gap} & \multicolumn{1}{c}{\#b(w)} && \multicolumn{1}{c}{time} & \multicolumn{1}{c}{iter} & \multicolumn{1}{c}{\%gap} & \multicolumn{1}{c}{\#b(w)} \\ \hline
Type 1	&&	0.000 	&	54	&&	0.01 	&	0.262 	&	0 (5)&&	0.01 	&	1.17 	&	0.000 	&	0 (0)&&	0.01 	&	1.15 	&	0.000 	&	0 (0)\\
Type 2	&&	1.307 	&	48	&&	0.05 	&	1.604 	&	0 (10)&&	0.06 	&	1.22 	&	1.423 	&	0 (1)&&	0.06 	&	1.19 	&	1.423 	&	0 (1)\\
Type 3	&&	11.239 	&	18	&&	0.27 	&	11.276 	&	0 (9)&&	0.42 	&	1.28 	&	11.237 	&	1 (0)&&	0.40 	&	1.28 	&	11.237 	&	1 (0)\\
Type 4	&&	9.895 	&	18	&&	0.21 	&	9.964 	&	0 (10)&&	0.27 	&	1.24 	&	9.895 	&	0 (0)&&	0.26 	&	1.24 	&	9.895 	&	0 (0)\\
Type 5	&&	8.151 	&	24	&&	0.19 	&	8.407 	&	0 (16)&&	0.30 	&	1.44 	&	8.201 	&	1 (1)&&	0.30 	&	1.44 	&	8.202 	&	0 (1)\\
Type 6	&&	22.144 	&	0	&&	0.44 	&	22.182 	&	0 (15)&&	5.19 	&	6.43 	&	22.140 	&	3 (0)&&	2.73 	&	3.89 	&	22.140 	&	3 (0)\\
Type 7	&&	22.268 	&	0	&&	0.50 	&	22.293 	&	4 (17)&&	5.40 	&	4.65 	&	22.253 	&	14 (5)&&	3.62 	&	3.17 	&	22.253 	&	14 (5)\\
Type 8	&&	9.627 	&	20	&&	0.25 	&	9.801 	&	0 (20)&&	0.37 	&	1.39 	&	9.723 	&	0 (9)&&	0.36 	&	1.39 	&	9.723 	&	0 (9)\\
Type 9	&&	0.000 	&	54	&&	0.01 	&	0.005 	&	0 (1)&&	0.01 	&	1.02 	&	0.000 	&	0 (0)&&	0.01 	&	1.02 	&	0.000 	&	0 (0)\\\hline
overall	&&	9.4033 	&	236	&&	0.22 	&	9.5328 	&	4 (103)&&	1.34 	&	2.20 	&	9.4303 	&	19 (16)&&	0.86 	&	1.75 	&	9.4304 	&	18 (16)\\\hline\end{tabular}
\end{table}

Table~\ref{tbl:kp} shows the results obtained by DS, \mbox{iDS-H} and \mbox{iDS-B}.
Each row of the table refers to the 54 instances of the same type.
The ``Best Known'' values (optimality gaps and number of instances solved to proven optimality) are the values obtained from the three heuristic algorithms and the three exact algorithms by \citet{FIMY15}.

The entries ``\%gap,'' ``\#opt,'' ``time,'' and ``iter'' provide the same information as in Table~\ref{tbl:mkp}.
In columns ``\#b(w),'' the value outside (resp. inside) parentheses shows the total number of instances for which the objective function value was better (resp. worse) than the best known solution value.
The final line reports the overall average and total values over the 486 instances.

For all 108 instances of Types~1 and 9, both \mbox{iDS-H} and \mbox{iDS-B} obtained optimal solutions within 0.01~seconds on average.
For the 162 instances of Types~3, 4, and 6,
both \mbox{iDS-H} and \mbox{iDS-B} obtained solutions of value equal to or better than the best known solution value, improving the best known solution values for 4 instances.
For instances of Types~2 and 5, both iDS algorithms very quickly obtained solutions with objective values equal to the best known solution values for most instances (106 out of 108),
with \mbox{iDS-H} updating one best known objective value.
For Type~7 instances, the iDS algorithms updated the best known objective values for 14 out of 54 instances,
although they obtained worse solution values for 4 instances.
For no Type~8 instance the iDS algorithms improved the best known solution value.
We next evaluate the iDS algorithms by comparing them with the results of the DS algorithm.
The DS updated 4 best known objective values, but obtained worse solutions for 103 instances.
Both \mbox{iDS-H} and \mbox{iDS-B} improved most of these worse solutions (86 out of 103),
and \mbox{iDS-H} (resp. iDS-B) further updated the best known objective values for 15 (resp. 14) more instances.

In total, the iDS algorithms updated the best known objective values for 19 instances,
and obtained worse solutions for 16 instances.
They obtained optimal solutions for most instances (235 out of 236, see appendix~\ref{Asec:KP}) whose optimal solutions were known.
These results suggest that the iDS algorithms can be used as a heuristic for quickly obtaining high-quality solutions for the \mbox{MMR-KP}, except for Type~8 instances.

Comparing the two iDS algorithms, on the basis of the detailed results in appendix~\ref{Asec:KP},
it turns out that \mbox{iDS-B} was the winner (better solution or solution of the same value in fewer iterations) in 25 instances, while \mbox{iDS-H} won only once.
For Type~6 and 7 instances in particular, \mbox{iDS-B} won in 23 instances.

\subsection{Min--Max Regret Set Covering Problem} \label{sec:RSCP}
For the \mbox{MMR-SCP}, 
we used the benchmark instances of \citet{PA13},
which were generated from 25 benchmark instances for the classical SCP, namely instance sets~4, 5, and 6 in the OR-Library \citep[see][]{B90}. 
Instance set 4 contains 10 instances with $n=1000$ and $m=200$, instance set 5 contains 10 instances with $n=2000$ and $m=200$, and instance set 6 contains 5 instances with $n=1000$ and $m=200$.
In all cases, the original cost values range between 1 and 100. The total number of classical SCP instances is 25.
The extremes of the cost intervals $c^-_j$ and $c^+_j$ for each instance family of the \mbox{MMR-SCP} were set according to the following three types:
\begin{description}
\item[\quad{\bf B}:] the extremes of cost intervals $c^-_j$ and $c^+_j$ were generated as random integers u.d. in $[(1-\delta)c_j, c_j]$ and $[c_j, (1+\delta)c_j]$, respectively, with $\delta \in \{0.1, 0.3, 0.5\}$,
where $c_j$ are the original costs of the corresponding SCP instances from the OR-Library.
The total number of Type-B instances is thus 75;
\item[\quad{\bf M}:] for every $j\in N$, $c^+_j$ was generated as a random integer u.d. in $[0, 1000]$, and $c^-_j$ as a random integer u.d. in $[0, c_j^+]$.
Three instances were generated from each original SCP instance, resulting in a total of 75 Type-M instances;
\item[\quad{\bf K}:] for every $j\in N$, $c^-_j$ was generated as a random integer u.d. in $[0, 1000]$, and $c^+_j$ as a random integer u.d. in $[c_j^-, c_j^++1000]$.
Three instances were generated from each original SCP instance, producing in total 75 Type-K instances.
\end{description}

\begin{table}[htbp]
\centering \fontsize{9pt}{10.5pt}\selectfont
\setlength{\tabcolsep}{4.4pt}
\caption{MMR-SCP results}
\label{tbl:scp}
\begin{tabular}{cl*{16}{r}}
\hline
                             && \multicolumn{2}{c}{Best Known}                  && \multicolumn{3}{c}{DS}                                                                  && \multicolumn{4}{c}{iDS-H}                                                                  && \multicolumn{4}{c}{iDS-B}                                                                      \\ \cline{3-4} \cline{6-8} \cline{10-13} \cline{15-18} 
\multicolumn{1}{c}{instances} && \multicolumn{1}{c}{\%gap} & \multicolumn{1}{c}{\#opt} && \multicolumn{1}{c}{time} & \multicolumn{1}{c}{\%gap} & \multicolumn{1}{c}{\#b(w)} && \multicolumn{1}{c}{time} & \multicolumn{1}{c}{iter} & \multicolumn{1}{c}{\%gap} & \multicolumn{1}{c}{\#b(w)} && \multicolumn{1}{c}{time} & \multicolumn{1}{c}{iter} & \multicolumn{1}{c}{\%gap} & \multicolumn{1}{c}{\#b(w)} \\ \hline		
B4	&&	0.000 	&	30 	&&	0.4 	&	0.834 	&	0 (13)&&	8.2 	&	9.03 	&	0.000 	&	0 (0)&&	4.8 	&	4.83 	&	0.000 	&	0 (0)\\
B5	&&	0.000 	&	30 	&&	0.4 	&	1.752 	&	0 (14)&&	1.7 	&	3.23 	&	0.000 	&	0 (0)&&	1.5 	&	2.33 	&	0.000 	&	0 (0)\\
B6	&&	0.000 	&	15 	&&	0.6 	&	1.499 	&	0 (6)&&	5.8 	&	7.47 	&	0.000 	&	0 (0)&&	2.0 	&	2.67 	&	0.000 	&	0 (0)\\\cline{1-1} \cline{3-4} \cline{6-8} \cline{10-13}  \cline{15-18}
M4	&&	0.000 	&	30 	&&	0.5 	&	0.021 	&	0 (4)&&	1.5 	&	2.20 	&	0.000 	&	0 (0)&&	1.2 	&	1.53 	&	0.000 	&	0 (0)\\
M5	&&	0.000 	&	30 	&&	0.4 	&	0.008 	&	0 (1)&&	0.5 	&	1.13 	&	0.000 	&	0 (0)&&	0.4 	&	1.07 	&	0.000 	&	0 (0)\\
M6	&&	0.000 	&	15 	&&	0.6 	&	0.063 	&	0 (3)&&	1.2 	&	1.60 	&	0.000 	&	0 (0)&&	1.0 	&	1.40 	&	0.000 	&	0 (0)\\\cline{1-1} \cline{3-4} \cline{6-8} \cline{10-13}  \cline{15-18}
K4	&&	11.583 	&	0 	&&	150.1 	&	11.531 	&	6 (6)&&	241.8 	&	1.63 	&	11.490 	&	8 (0)&&	352.1 	&	1.73 	&	11.489 	&	8 (0)\\
K5	&&	6.036 	&	1 	&&	46.9 	&	6.040 	&	3 (8)&&	71.2 	&	1.70 	&	6.004 	&	3 (0)&&	66.5 	&	1.70 	&	6.004 	&	3 (0)\\
K6	&&	1.349 	&	8 	&&	134.1 	&	1.410 	&	0 (3)&&	146.8 	&	1.20 	&	1.349 	&	0 (0)&&	144.6 	&	1.20 	&	1.349 	&	0 (0)\\\hline
overall	&&	2.4392 	&	159 	&&	35.5 	&	2.8896 	&	9	(58)&&	53.6 	&	3.21 	&	2.4225 	&	11	(0)&&	66.7 	&	2.11 	&	2.4223 	&	11	(0)\\\hline
\end{tabular}
\end{table}

Table~\ref{tbl:scp} shows the results obtained by the DS and iDS algorithms for each instance group.
A row ``\mbox{$Tx$}'' refers to all Type-$T$ instances whose corresponding SCP instance is in family $x$ from the \mbox{OR-Library}.
The ``Best Known'' values come from the algorithms by \citet{PA13}.
All notations are the same as in Table~\ref{tbl:kp}.

For all 150 Type~B and M instances, both \mbox{iDS-H} and \mbox{iDS-B} required less than 10~seconds on average.
For all 75 Type~K instances,
both algorithms obtained solutions with objective function value equal to or better than the best known solution value,
and they updated the best known solution values for 11 instances.
Most best solutions obtained by the iDS algorithms were found in early iterations.
For the 159 instances whose optimal values are known, both algorithms provided all optimal solutions.

For 58 instances for which the best solution values obtained by the DS algorithm were worse than the best known solution values,
the iDS algorithms improved all the solutions in or after the second iteration. 
The detailed results in appendix~\ref{Asec:SCP}
show that \mbox{iDS-B} is the winner for 26 out of 64 instances whose best solutions were obtained in or after the second iteration,while both iDS algorithms show the same performance for the other instances.

\subsection{Min--Max Regret Generalized Assignment Problem} \label{sec:RGAP}
For the \mbox{MMR-GAP}, 
we used the benchmark instances by \citet{WIMY18},
available at \url{http://www.co.mi.i.nagoya-u.ac.jp/~yagiura/mmr-gap/}\,.
The instances were generated from the classical GAP benchmark instances (see \citealp{CB97}, and \citealp{LKGG95})
of the following types:
\begin{description}
\item[\quad{\bf A}:] $\forall i,j$, $a_{ij}$ is a random integer u.d. in $[5,25]$, $c_{ij}$ is a random integer u.d. in $[10,50]$, and $b_i=0.6(n/m)15+0.4\gamma$, where $\gamma=\max_{i\in I}\sum_{j\in J,\,\theta_j=i}a_{ij}$ and $\theta_j = \min \{i\mid c_{ij}\le c_{kj}, \forall k\in I\}$.
\item[\quad{\bf B}:] $a_{ij}$ and $c_{ij}$ as for Type A; $b_i$ is set to $70\%$ of the Type A value.
\item[\quad{\bf C}:] $a_{ij}$ and $c_{ij}$ as for Type A; $b_i=0.8\sum^n_{j=1}a_{ij}/m$. 
\item[\quad{\bf E}:] $\forall i,j$, $a_{ij}=1-10\ln e_2$ ($e_2$ is a random number u.d. in $(0,1]$), $c_{ij}=1000/a_{ij}-10e_3$ ($e_3$ is a random number u.d. in $[0,1]$); $b_i=0.8\sum^n_{j=1}a_{ij}/m$.
\end{description}
The GAP instances were generated with a number of agents $m\in\{5,10\}$ and a number of jobs $n\in\{40,80\}$.
The cost interval extremes $c^-_{ij}$ and $c^+_{ij}$ were generated as random integers u.d. in $[(1-\delta)c_{ij}, c_{ij}]$ and $[c_{ij}, (1+\delta)c_{ij}]$, respectively, with $\delta \in \{0.10, 0.25, 0.50\}$.
By generating 5 instances for each $\delta$, we obtained 240 instances in total.

\begin{table}[htbp]
\centering \fontsize{9pt}{10.5pt}\selectfont
\setlength{\tabcolsep}{4pt}
\caption{MMR-GAP results}
\label{tbl:gap}
\begin{tabular}{cl*{16}{r}}
\hline
                             && \multicolumn{2}{c}{Best Known}                  && \multicolumn{3}{c}{DS}                                                                  && \multicolumn{4}{c}{iDS-H}                                                                  && \multicolumn{4}{c}{iDS-B}                                                                      \\ \cline{3-4} \cline{6-8} \cline{10-13} \cline{15-18} 
\multicolumn{1}{c}{instances} && \multicolumn{1}{c}{\%gap} & \multicolumn{1}{c}{\#opt} && \multicolumn{1}{c}{time} & \multicolumn{1}{c}{\%gap} & \multicolumn{1}{c}{\#b(w)} && \multicolumn{1}{c}{time} & \multicolumn{1}{c}{iter} & \multicolumn{1}{c}{\%gap} & \multicolumn{1}{c}{\#b(w)} && \multicolumn{1}{c}{time} & \multicolumn{1}{c}{iter} & \multicolumn{1}{c}{\%gap} & \multicolumn{1}{c}{\#b(w)} \\ \hline
A05	&&	5.26 	&	23 	&&	0.2 	&	5.54 	&	0 (3)&&	0.2 	&	1.13 	&	5.26 	&	0 (0)&&	0.2 	&	1.10 	&	5.26 	&	0 (0)\\
A10	&&	9.29 	&	14 	&&	0.5 	&	9.57 	&	0 (2)&&	15.6 	&	11.50 	&	9.24 	&	4 (0)&&	23.1 	&	11.53 	&	9.21 	&	5 (0)\\\cline{1-1} \cline{3-4} \cline{6-8} \cline{10-13}  \cline{15-18}
B05	&&	6.93 	&	20 	&&	11.4 	&	8.21 	&	0 (3)&&	15.0 	&	1.67 	&	6.90 	&	2 (0)&&	14.8 	&	1.53 	&	6.92 	&	1 (0)\\
B10	&&	8.51 	&	15 	&&	514.5 	&	10.54 	&	0 (12)&&	559.2 	&	2.47 	&	8.41 	&	5 (1)&&	552.9 	&	2.47 	&	8.41 	&	5 (1)\\\cline{1-1} \cline{3-4} \cline{6-8} \cline{10-13}  \cline{15-18}
C05	&&	7.31 	&	19 	&&	9.1 	&	7.32 	&	0 (1)&&	12.5 	&	1.13 	&	7.21 	&	2 (0)&&	13.2 	&	1.13 	&	7.21 	&	2 (0)\\
C10	&&	8.02 	&	17 	&&	380.1 	&	9.83 	&	0 (13)&&	481.1 	&	6.10 	&	7.92 	&	4 (0)&&	478.2 	&	5.97 	&	7.92 	&	4 (0)\\\cline{1-1} \cline{3-4} \cline{6-8} \cline{10-13}  \cline{15-18}
E05	&&	18.18 	&	11 	&&	872.3 	&	18.35 	&	1 (7)&&	894.5 	&	1.93 	&	18.06 	&	4 (3)&&	893.0 	&	1.93 	&	18.06 	&	4 (3)\\
E10	&&	24.50 	&	3 	&&	2143.2 	&	25.74 	&	2 (17)&&	2294.8 	&	1.70 	&	25.44 	&	4 (16)&&	2391.0 	&	1.80 	&	25.44 	&	5 (16)\\\hline
overall	&&	11.00 	&	122	&&	491.4 	&	11.89 	&	3 (58)&&	534.1 	&	3.45 	&	11.06 	&	25 (20)&&	545.8 	&	3.43 	&	11.05 	&	26 (20)\\\hline
\end{tabular}
\end{table}

Table~\ref{tbl:gap} shows the results
for the \mbox{MMR-GAP} for each instance group.
A row denoted ``\mbox{$Txx$}'' includes 30 Type-$T$ instances whose number of agents is $xx$.
The ``Best Known'' values are taken from \citet{WIMY18}.
All notations are the same as in Table~\ref{tbl:kp}.

For most instances of Types A, B, and C (179 out of 180), both \mbox{iDS-H} and \mbox{iDS-B} obtained solutions whose objective values were equal to or better than the best known objective values.
Both algorithms updated 17 best known objective values out of 180 instances.
For the 60 instances of Type E, \mbox{iDS-B} updated 9 best known objective values, while \mbox{iDS-H} updated 8,
but both obtained worse solutions for 19 instances.
Note that in most of these 19 instances, the iDS algorithms failed to complete the first iteration (see appendix~\ref{Asec:GAP}, Table~\ref{tbl:gap-e}),
indicating that the performance of iDS algorithms depends on how fast the classical DS can exactly solve the \mbox{D-MMR-BIP}.

The iDS algorithms improved solutions for 38 out of 58 instances for which the best solution values obtained by the DS were worse than the best known solution values,
and \mbox{iDS-H} (resp. iDS-B) further updated the best known objective values for 22 (resp. 23) more instances than the DS.

A similar observation can be made for the \mbox{MMR-GAP}, namely that most best solutions obtained by the iDS algorithms were found in early iterations.
For the 122 instances whose optimal values are known, the iDS algorithms provided all optimal solutions.
Thus, similarly to what happens for the \mbox{MMR-MKP}, \mbox{MMR-KP}, and \mbox{MMR-SCP} cases,
the iDS algorithms provide a good heuristic for quickly obtaining optimal or near-optimal solutions for \mbox{MMR-GAP}, especially for Type A, B, and C instances.

By comparing \mbox{iDS-B} and \mbox{iDS-H} through the results in~appendix~\ref{Asec:GAP}, 
focusing on the 60 instances whose best solutions were obtained in or after the second iteration,
it turns out that
\mbox{iDS-B} outperformed \mbox{iDS-H} for 9 instances, while \mbox{iDS-H} was superior for only 2 instances,
suggesting that \mbox{iDS-B} is a better choice than \mbox{iDS-H} for the \mbox{MMR-GAP}.

\section{Conclusions}
We studied a robust version of the binary integer programming problem (BIP) called the min--max regret BIP.
We proposed a new heuristic framework, the iterated dual substitution (iDS) algorithm.
In the iDS approach, we considered two kinds of constraints to exclude from the search space already checked solutions:
constraints pruning checked solutions based on Hamming distance and constraints using a lemma on dominance between solutionsbased on the best scenario.
We tested the two resulting heuristics (\mbox{iDS-H} and \mbox{iDS-B}) on four representative problems under the min--max regret criterion:
the knapsack problem, the multidimensional knapsack problem, the generalized assignment problem, and the set covering problem.

For the min--max regret multidimensional knapsack problem, we compared the proposed algorithms with two classical approaches:
a branch-and-cut algorithm and a fixed-scenario algorithm.
Computational results showed that the iDS algorithms outperformed in terms of solution quality the other algorithms for most tested instances.
No clear winner emerged between \mbox{iDS-H} and \mbox{iDS-B}.

For the min--max regret knapsack problem,
the min--max regret generalized assignment problem,
and the min--max regret set covering problem,
we compared the iDS algorithms with state-of-the-art results.
For all these problems, 
the iDS algorithms successfully updated records for a number of benchmark instances,
and \mbox{iDS-B} exhibited better performance than \mbox{iDS-H}.

Overall, the computational results for all tested problems suggest that
the iDS approach is an effective heuristic for binary integer programming problems.

\section*{Acknowledgement}
This work was supported by JSPS KAKENHI [Grant No.15K12460, 15H02969, 19K11843, 20H02388],
and by the Air Force Office of Scientific Research under Award number FA8655-20-1-7012.

\bibliographystyle{abbrvnat}  
\bibliography{references}

\newpage
\begin{appendices}
\numberwithin{table}{section}
\section{Detailed results for the Min--Max Regret Multidimensional Knapsack Problem}\label{Asec:MKP}

Tables~\ref{tbl:mkp-05100}--\ref{tbl:mkp-05250} show the results of the algorithms
including
the branch-and-cut algorithm (``B\&C''),
the fixed-scenario algorithm (``Fix''),
the DS algorithm (``DS''),
iDS using Hamming-distance constraints (``\mbox{iDS-H}''),
and iDS using best-scenario constraints (``\mbox{iDS-B}'').
An \mbox{MMR-MKP} instance denoted by ``\mbox{$wwxxxyy$-$zz$}'' is generated from the $zz$th MKP instance with $m=ww$, $n=xxx$, $100\delta=yy$.
Concerning the branch-and-cut algorithm,
each entry shows the best obtained solution value (``obj''), the CPU time in seconds required to obtain that solution (``time''),
the lower-bound value obtained within the time limit (``LB''), and the optimality gap as a percentage (``\%gap''), that is, the gap between the solution value and the best lower-bound values.
Since the fixed-scenario approach under the median-value scenario is a \mbox{2-approximation} algorithm, half of its solution value is a valid lower bound.
The lower-bound value used to compute the percentage optimality gap was the best between such value and the lower bound produced by branch-and-cut.
For \mbox{iDS-H} and \mbox{iDS-B}, we provide the number of iterations (``iter'') required to obtain the best solution.
Values in bold signify that the corresponding algorithm(s) obtained the smallest percentage optimality gap among the tested algorithms.

\begin{table}[htbp]
\centering \fontsize{6pt}{6.8pt}\selectfont
\setlength{\tabcolsep}{4pt}
\caption{MMR-MKP results with $m=5,n=100$}
\label{tbl:mkp-05100}
\begin{tabular}{ll*{22}{r}}
\hline
                             && \multicolumn{4}{c}{B\&C}                  && \multicolumn{3}{c}{Fix}                  && \multicolumn{3}{c}{DS}                  && \multicolumn{4}{c}{iDS-H}                                                                  && \multicolumn{4}{c}{iDS-B}                                                                      \\ \cline{3-6} \cline{8-10} \cline{12-14} \cline{16-19} \cline{21-24} 
\multicolumn{1}{c}{instance} && \multicolumn{1}{c}{obj} & \multicolumn{1}{c}{time} & \multicolumn{1}{c}{LB} & \multicolumn{1}{c}{\%gap} && \multicolumn{1}{c}{obj} & \multicolumn{1}{c}{time} & \multicolumn{1}{c}{\%gap}  && \multicolumn{1}{c}{obj} & \multicolumn{1}{c}{time} & \multicolumn{1}{c}{\%gap}  && \multicolumn{1}{c}{obj} & \multicolumn{1}{c}{time} & \multicolumn{1}{c}{iter} & \multicolumn{1}{c}{\%gap} && \multicolumn{1}{c}{obj} & \multicolumn{1}{c}{time} & \multicolumn{1}{c}{iter} & \multicolumn{1}{c}{\%gap} \\ \bhline{1.5pt}
 0510010-01	&&	\textbf{662}	&	73.7 	&	662 	&	0.0 	&&	\textbf{662}	&	0.5 	&	0.0 	&&	\textbf{662}	&	1.2 	&	0.0 	&&	\textbf{662}	&	1.2 	&	1	&	0.0 	&&	\textbf{662}	&	1.2 	&	1	&	0.0 	\\
 0510010-02	&&	\textbf{663}	&	94.9 	&	663 	&	0.0 	&&	\textbf{663}	&	1.8 	&	0.0 	&&		676 		&	0.7 	&	1.9 	&&	\textbf{663}	&	2.1 	&	2	&	0.0 	&&	\textbf{663}	&	2.6 	&	2	&	0.0 	\\
 0510010-03	&&	\textbf{426}	&	51.7 	&	426 	&	0.0 	&&		457 		&	2.1 	&	6.8 	&&		441 		&	1.3 	&	3.4 	&&	\textbf{426}	&	6.7 	&	5	&	0.0 	&&	\textbf{426}	&	6.3 	&	5	&	0.0 	\\
 0510010-04	&&	\textbf{809}	&	783.7 	&	809 	&	0.0 	&&	\textbf{809}	&	1.9 	&	0.0 	&&	\textbf{809}	&	11.3 	&	0.0 	&&	\textbf{809}	&	11.3 	&	1	&	0.0 	&&	\textbf{809}	&	11.3 	&	1	&	0.0 	\\
 0510010-05	&&	\textbf{737}	&	85.9 	&	737 	&	0.0 	&&		755 		&	3.5 	&	2.4 	&&	\textbf{737}	&	1.7 	&	0.0 	&&	\textbf{737}	&	1.7 	&	1	&	0.0 	&&	\textbf{737}	&	1.7 	&	1	&	0.0 	\\
 0510010-06	&&	\textbf{482}	&	121.8 	&	482 	&	0.0 	&&	\textbf{482}	&	0.4 	&	0.0 	&&		541 		&	1.7 	&	10.9 	&&	\textbf{482}	&	3.7 	&	2	&	0.0 	&&	\textbf{482}	&	3.3 	&	2	&	0.0 	\\
 0510010-07	&&	\textbf{603}	&	34.0 	&	603 	&	0.0 	&&	\textbf{603}	&	0.5 	&	0.0 	&&	\textbf{603}	&	0.8 	&	0.0 	&&	\textbf{603}	&	0.8 	&	1	&	0.0 	&&	\textbf{603}	&	0.8 	&	1	&	0.0 	\\
 0510010-08	&&	\textbf{406}	&	341.7 	&	406 	&	0.0 	&&		482 		&	1.2 	&	15.8 	&&	\textbf{406}	&	0.8 	&	0.0 	&&	\textbf{406}	&	0.8 	&	1	&	0.0 	&&	\textbf{406}	&	0.8 	&	1	&	0.0 	\\
 0510010-09	&&	\textbf{469}	&	110.4 	&	469 	&	0.0 	&&		499 		&	1.9 	&	6.0 	&&		499 		&	1.3 	&	6.0 	&&	\textbf{469}	&	3.1 	&	2	&	0.0 	&&	\textbf{469}	&	3.2 	&	2	&	0.0 	\\
 0510010-10	&&	\textbf{587}	&	362.3 	&	587 	&	0.0 	&&		644 		&	2.0 	&	8.9 	&&		604 		&	3.0 	&	2.8 	&&	\textbf{587}	&	8.6 	&	2	&	0.0 	&&	\textbf{587}	&	8.2 	&	2	&	0.0 	\\
 0510010-11	&&	\textbf{422}	&	62.2 	&	422 	&	0.0 	&&	\textbf{422}	&	0.1 	&	0.0 	&&	\textbf{422}	&	0.4 	&	0.0 	&&	\textbf{422}	&	0.4 	&	1	&	0.0 	&&	\textbf{422}	&	0.4 	&	1	&	0.0 	\\
 0510010-12	&&	\textbf{667}	&	47.3 	&	667 	&	0.0 	&&		696 		&	0.8 	&	4.2 	&&	\textbf{667}	&	2.5 	&	0.0 	&&	\textbf{667}	&	2.5 	&	1	&	0.0 	&&	\textbf{667}	&	2.5 	&	1	&	0.0 	\\
 0510010-13	&&	\textbf{618}	&	204.2 	&	618 	&	0.0 	&&		633 		&	1.9 	&	2.4 	&&		633 		&	6.9 	&	2.4 	&&	\textbf{618}	&	22.3 	&	3	&	0.0 	&&	\textbf{618}	&	22.4 	&	3	&	0.0 	\\
 0510010-14	&&	\textbf{442}	&	78.7 	&	442 	&	0.0 	&&		466 		&	1.9 	&	5.2 	&&		476 		&	1.8 	&	7.1 	&&	\textbf{442}	&	10.8 	&	4	&	0.0 	&&	\textbf{442}	&	11.3 	&	4	&	0.0 	\\
 0510010-15	&&	\textbf{409}	&	62.4 	&	409 	&	0.0 	&&	\textbf{409}	&	0.8 	&	0.0 	&&	\textbf{409}	&	1.0 	&	0.0 	&&	\textbf{409}	&	1.0 	&	1	&	0.0 	&&	\textbf{409}	&	1.0 	&	1	&	0.0 	\\
 0510010-16	&&	\textbf{507}	&	53.0 	&	507 	&	0.0 	&&		603 		&	0.6 	&	15.9 	&&	\textbf{507}	&	1.0 	&	0.0 	&&	\textbf{507}	&	1.0 	&	1	&	0.0 	&&	\textbf{507}	&	1.0 	&	1	&	0.0 	\\
 0510010-17	&&	\textbf{413}	&	38.1 	&	413 	&	0.0 	&&	\textbf{413}	&	0.3 	&	0.0 	&&		417 		&	0.6 	&	1.0 	&&	\textbf{413}	&	1.6 	&	2	&	0.0 	&&	\textbf{413}	&	1.5 	&	2	&	0.0 	\\
 0510010-18	&&	\textbf{579}	&	85.3 	&	579 	&	0.0 	&&		638 		&	3.0 	&	9.2 	&&	\textbf{579}	&	1.6 	&	0.0 	&&	\textbf{579}	&	1.6 	&	1	&	0.0 	&&	\textbf{579}	&	1.6 	&	1	&	0.0 	\\
 0510010-19	&&	\textbf{646}	&	187.7 	&	646 	&	0.0 	&&		647 		&	0.7 	&	0.2 	&&		647 		&	0.9 	&	0.2 	&&	\textbf{646}	&	5.2 	&	3	&	0.0 	&&	\textbf{646}	&	4.4 	&	3	&	0.0 	\\
 0510010-20	&&	\textbf{525}	&	46.8 	&	525 	&	0.0 	&&		554 		&	0.5 	&	5.2 	&&		554 		&	0.9 	&	5.2 	&&	\textbf{525}	&	3.2 	&	3	&	0.0 	&&	\textbf{525}	&	3.3 	&	3	&	0.0 	\\
 0510010-21	&&	\textbf{326}	&	19.6 	&	326 	&	0.0 	&&		377 		&	0.4 	&	13.5 	&&	\textbf{326}	&	0.2 	&	0.0 	&&	\textbf{326}	&	0.2 	&	1	&	0.0 	&&	\textbf{326}	&	0.2 	&	1	&	0.0 	\\
 0510010-22	&&	\textbf{413}	&	33.0 	&	413 	&	0.0 	&&	\textbf{413}	&	0.2 	&	0.0 	&&	\textbf{413}	&	1.4 	&	0.0 	&&	\textbf{413}	&	1.4 	&	1	&	0.0 	&&	\textbf{413}	&	1.4 	&	1	&	0.0 	\\
 0510010-23	&&	\textbf{488}	&	71.6 	&	488 	&	0.0 	&&		494 		&	1.1 	&	1.2 	&&		494 		&	1.9 	&	1.2 	&&	\textbf{488}	&	12.2 	&	4	&	0.0 	&&	\textbf{488}	&	13.9 	&	4	&	0.0 	\\
 0510010-24	&&	\textbf{417}	&	54.8 	&	417 	&	0.0 	&&	\textbf{417}	&	0.6 	&	0.0 	&&	\textbf{417}	&	1.2 	&	0.0 	&&	\textbf{417}	&	1.2 	&	1	&	0.0 	&&	\textbf{417}	&	1.2 	&	1	&	0.0 	\\
 0510010-25	&&	\textbf{363}	&	22.2 	&	363 	&	0.0 	&&		366 		&	3.0 	&	0.8 	&&		380 		&	0.6 	&	4.5 	&&	\textbf{363}	&	3.2 	&	3	&	0.0 	&&	\textbf{363}	&	3.1 	&	3	&	0.0 	\\
 0510010-26	&&	\textbf{497}	&	21.9 	&	497 	&	0.0 	&&		544 		&	1.0 	&	8.6 	&&	\textbf{497}	&	1.4 	&	0.0 	&&	\textbf{497}	&	1.4 	&	1	&	0.0 	&&	\textbf{497}	&	1.4 	&	1	&	0.0 	\\
 0510010-27	&&	\textbf{350}	&	42.3 	&	350 	&	0.0 	&&	\textbf{350}	&	0.3 	&	0.0 	&&	\textbf{350}	&	0.4 	&	0.0 	&&	\textbf{350}	&	0.4 	&	1	&	0.0 	&&	\textbf{350}	&	0.4 	&	1	&	0.0 	\\
 0510010-28	&&	\textbf{363}	&	9.5 	&	363 	&	0.0 	&&	\textbf{363}	&	0.3 	&	0.0 	&&	\textbf{363}	&	0.4 	&	0.0 	&&	\textbf{363}	&	0.4 	&	1	&	0.0 	&&	\textbf{363}	&	0.4 	&	1	&	0.0 	\\
 0510010-29	&&	\textbf{381}	&	32.0 	&	381 	&	0.0 	&&	\textbf{381}	&	0.2 	&	0.0 	&&	\textbf{381}	&	0.4 	&	0.0 	&&	\textbf{381}	&	0.4 	&	1	&	0.0 	&&	\textbf{381}	&	0.4 	&	1	&	0.0 	\\
 0510010-30	&&	\textbf{340}	&	49.7 	&	340 	&	0.0 	&&		383 		&	1.6 	&	11.2 	&&	\textbf{340}	&	1.0 	&	0.0 	&&	\textbf{340}	&	1.0 	&	1	&	0.0 	&&	\textbf{340}	&	1.0 	&	1	&	0.0 	\\
\hline																																															
 0510020-01	&&	\textbf{1963}	&	764.6 	&	1963 	&	0.0 	&&		1968 		&	1.7 	&	0.3 	&&		1984 		&	7.5 	&	1.1 	&&	\textbf{1963}	&	45.3 	&	4	&	0.0 	&&	\textbf{1963}	&	44.0 	&	4	&	0.0 	\\
 0510020-02	&&	\textbf{1434}	&	465.5 	&	1434 	&	0.0 	&&	\textbf{1434}	&	1.0 	&	0.0 	&&		1457 		&	2.3 	&	1.6 	&&	\textbf{1434}	&	8.6 	&	3	&	0.0 	&&	\textbf{1434}	&	8.7 	&	3	&	0.0 	\\
 0510020-03	&&	\textbf{1363}	&	81.7 	&	1363 	&	0.0 	&&		1474 		&	2.4 	&	7.5 	&&	\textbf{1363}	&	1.5 	&	0.0 	&&	\textbf{1363}	&	1.5 	&	1	&	0.0 	&&	\textbf{1363}	&	1.5 	&	1	&	0.0 	\\
 0510020-04	&&	\textbf{2190}	&	3418.2 	&	1987 	&	9.3 	&&		2240 		&	0.4 	&	11.3 	&&	\textbf{2190}	&	11.1 	&	9.3 	&&	\textbf{2190}	&	11.1 	&	1	&	9.3 	&&	\textbf{2190}	&	11.1 	&	1	&	9.3 	\\
 0510020-05	&&	\textbf{1732}	&	1906.1 	&	1732 	&	0.0 	&&	\textbf{1732}	&	2.6 	&	0.0 	&&	\textbf{1732}	&	3.0 	&	0.0 	&&	\textbf{1732}	&	3.0 	&	1	&	0.0 	&&	\textbf{1732}	&	3.0 	&	1	&	0.0 	\\
 0510020-06	&&	\textbf{1364}	&	178.8 	&	1364 	&	0.0 	&&	\textbf{1364}	&	0.5 	&	0.0 	&&	\textbf{1364}	&	0.4 	&	0.0 	&&	\textbf{1364}	&	0.4 	&	1	&	0.0 	&&	\textbf{1364}	&	0.4 	&	1	&	0.0 	\\
 0510020-07	&&	\textbf{1679}	&	882.1 	&	1679 	&	0.0 	&&		1742 		&	1.8 	&	3.6 	&&	\textbf{1679}	&	5.6 	&	0.0 	&&	\textbf{1679}	&	5.6 	&	1	&	0.0 	&&	\textbf{1679}	&	5.6 	&	1	&	0.0 	\\
 0510020-08	&&	\textbf{1577}	&	274.9 	&	1577 	&	0.0 	&&	\textbf{1577}	&	0.5 	&	0.0 	&&	\textbf{1577}	&	2.2 	&	0.0 	&&	\textbf{1577}	&	2.2 	&	1	&	0.0 	&&	\textbf{1577}	&	2.2 	&	1	&	0.0 	\\
 0510020-09	&&	\textbf{1393}	&	457.5 	&	1393 	&	0.0 	&&		1424 		&	0.5 	&	2.2 	&&		1424 		&	1.3 	&	2.2 	&&	\textbf{1393}	&	2.6 	&	2	&	0.0 	&&	\textbf{1393}	&	2.7 	&	2	&	0.0 	\\
 0510020-10	&&	\textbf{1707}	&	940.0 	&	1707 	&	0.0 	&&	\textbf{1707}	&	0.5 	&	0.0 	&&		1717 		&	2.9 	&	0.6 	&&	\textbf{1707}	&	5.8 	&	2	&	0.0 	&&	\textbf{1707}	&	5.7 	&	2	&	0.0 	\\
 0510020-11	&&		1615 		&	2785.1 	&	1301 	&	19.4 	&&		1602 		&	2.2 	&	18.8 	&&	\textbf{1574}	&	8.1 	&	17.3 	&&	\textbf{1574}	&	8.1 	&	1	&	17.3 	&&	\textbf{1574}	&	8.1 	&	1	&	17.3 	\\
 0510020-12	&&	\textbf{1224}	&	214.7 	&	1224 	&	0.0 	&&	\textbf{1224}	&	0.3 	&	0.0 	&&	\textbf{1224}	&	1.8 	&	0.0 	&&	\textbf{1224}	&	1.8 	&	1	&	0.0 	&&	\textbf{1224}	&	1.8 	&	1	&	0.0 	\\
 0510020-13	&&	\textbf{1767}	&	3286.1 	&	1493 	&	15.5 	&&	\textbf{1767}	&	0.5 	&	15.5 	&&	\textbf{1767}	&	7.8 	&	15.5 	&&	\textbf{1767}	&	7.8 	&	1	&	15.5 	&&	\textbf{1767}	&	7.8 	&	1	&	15.5 	\\
 0510020-14	&&	\textbf{1109}	&	484.6 	&	1109 	&	0.0 	&&		1123 		&	1.2 	&	1.2 	&&	\textbf{1109}	&	1.5 	&	0.0 	&&	\textbf{1109}	&	1.5 	&	1	&	0.0 	&&	\textbf{1109}	&	1.5 	&	1	&	0.0 	\\
 0510020-15	&&	\textbf{1272}	&	990.9 	&	1272 	&	0.0 	&&	\textbf{1272}	&	0.8 	&	0.0 	&&	\textbf{1272}	&	2.2 	&	0.0 	&&	\textbf{1272}	&	2.2 	&	1	&	0.0 	&&	\textbf{1272}	&	2.2 	&	1	&	0.0 	\\
 0510020-16	&&	\textbf{1229}	&	191.3 	&	1229 	&	0.0 	&&		1346 		&	0.7 	&	8.7 	&&	\textbf{1229}	&	1.4 	&	0.0 	&&	\textbf{1229}	&	1.4 	&	1	&	0.0 	&&	\textbf{1229}	&	1.4 	&	1	&	0.0 	\\
 0510020-17	&&	\textbf{1420}	&	100.8 	&	1420 	&	0.0 	&&		1446 		&	0.2 	&	1.8 	&&	\textbf{1420}	&	0.8 	&	0.0 	&&	\textbf{1420}	&	0.8 	&	1	&	0.0 	&&	\textbf{1420}	&	0.8 	&	1	&	0.0 	\\
 0510020-18	&&	\textbf{1743}	&	791.5 	&	1743 	&	0.0 	&&		1761 		&	1.1 	&	1.0 	&&	\textbf{1743}	&	5.7 	&	0.0 	&&	\textbf{1743}	&	5.7 	&	1	&	0.0 	&&	\textbf{1743}	&	5.7 	&	1	&	0.0 	\\
 0510020-19	&&		2175 		&	299.1 	&	1626 	&	25.2 	&&		2135 		&	3.6 	&	23.8 	&&	\textbf{2122}	&	21.7 	&	23.4 	&&	\textbf{2122}	&	21.7 	&	1	&	23.4 	&&	\textbf{2122}	&	21.7 	&	1	&	23.4 	\\
 0510020-20	&&	\textbf{1592}	&	746.0 	&	1592 	&	0.0 	&&	\textbf{1592}	&	0.4 	&	0.0 	&&	\textbf{1592}	&	1.9 	&	0.0 	&&	\textbf{1592}	&	1.9 	&	1	&	0.0 	&&	\textbf{1592}	&	1.9 	&	1	&	0.0 	\\
 0510020-21	&&	\textbf{1055}	&	137.0 	&	1055 	&	0.0 	&&	\textbf{1055}	&	0.3 	&	0.0 	&&	\textbf{1055}	&	0.8 	&	0.0 	&&	\textbf{1055}	&	0.8 	&	1	&	0.0 	&&	\textbf{1055}	&	0.8 	&	1	&	0.0 	\\
 0510020-22	&&	\textbf{809}	&	235.5 	&	809 	&	0.0 	&&	\textbf{809}	&	0.4 	&	0.0 	&&	\textbf{809}	&	0.5 	&	0.0 	&&	\textbf{809}	&	0.5 	&	1	&	0.0 	&&	\textbf{809}	&	0.5 	&	1	&	0.0 	\\
 0510020-23	&&	\textbf{1143}	&	245.5 	&	1143 	&	0.0 	&&	\textbf{1143}	&	0.4 	&	0.0 	&&	\textbf{1143}	&	2.3 	&	0.0 	&&	\textbf{1143}	&	2.3 	&	1	&	0.0 	&&	\textbf{1143}	&	2.3 	&	1	&	0.0 	\\
 0510020-24	&&	\textbf{1011}	&	287.9 	&	1011 	&	0.0 	&&	\textbf{1011}	&	0.1 	&	0.0 	&&	\textbf{1011}	&	0.2 	&	0.0 	&&	\textbf{1011}	&	0.2 	&	1	&	0.0 	&&	\textbf{1011}	&	0.2 	&	1	&	0.0 	\\
 0510020-25	&&	\textbf{1232}	&	79.2 	&	1232 	&	0.0 	&&		1354 		&	0.8 	&	9.0 	&&	\textbf{1232}	&	1.0 	&	0.0 	&&	\textbf{1232}	&	1.0 	&	1	&	0.0 	&&	\textbf{1232}	&	1.0 	&	1	&	0.0 	\\
 0510020-26	&&	\textbf{1428}	&	343.6 	&	1428 	&	0.0 	&&	\textbf{1428}	&	0.4 	&	0.0 	&&		1433 		&	3.6 	&	0.3 	&&	\textbf{1428}	&	10.2 	&	2	&	0.0 	&&	\textbf{1428}	&	8.7 	&	2	&	0.0 	\\
 0510020-27	&&	\textbf{1122}	&	1232.4 	&	1122 	&	0.0 	&&	\textbf{1122}	&	0.6 	&	0.0 	&&	\textbf{1122}	&	4.8 	&	0.0 	&&	\textbf{1122}	&	4.8 	&	1	&	0.0 	&&	\textbf{1122}	&	4.8 	&	1	&	0.0 	\\
 0510020-28	&&	\textbf{1003}	&	132.6 	&	1003 	&	0.0 	&&	\textbf{1003}	&	0.3 	&	0.0 	&&	\textbf{1003}	&	1.0 	&	0.0 	&&	\textbf{1003}	&	1.0 	&	1	&	0.0 	&&	\textbf{1003}	&	1.0 	&	1	&	0.0 	\\
 0510020-29	&&	\textbf{1214}	&	479.6 	&	1214 	&	0.0 	&&	\textbf{1214}	&	0.9 	&	0.0 	&&	\textbf{1214}	&	1.1 	&	0.0 	&&	\textbf{1214}	&	1.1 	&	1	&	0.0 	&&	\textbf{1214}	&	1.1 	&	1	&	0.0 	\\
 0510020-30	&&	\textbf{1101}	&	226.3 	&	1101 	&	0.0 	&&	\textbf{1101}	&	0.4 	&	0.0 	&&	\textbf{1101}	&	1.0 	&	0.0 	&&	\textbf{1101}	&	1.0 	&	1	&	0.0 	&&	\textbf{1101}	&	1.0 	&	1	&	0.0 	\\
\hline																																															
 0510030-01	&&		2774 		&	2476.9 	&	2134 	&	23.1 	&&		2672 		&	0.6 	&	20.1 	&&		2672 		&	9.8 	&	20.1 	&&	\textbf{2666}	&	21.9 	&	2	&	20.0 	&&	\textbf{2666}	&	22.2 	&	2	&	20.0 	\\
 0510030-02	&&	\textbf{3187}	&	784.4 	&	2752 	&	13.6 	&&		3327 		&	2.4 	&	17.3 	&&	\textbf{3187}	&	25.1 	&	13.6 	&&	\textbf{3187}	&	25.1 	&	1	&	13.6 	&&	\textbf{3187}	&	25.1 	&	1	&	13.6 	\\
 0510030-03	&&	\textbf{2912}	&	1075.5 	&	2912 	&	0.0 	&&	\textbf{2912}	&	0.4 	&	0.0 	&&		2915 		&	2.8 	&	0.1 	&&	\textbf{2912}	&	6.2 	&	2	&	0.0 	&&	\textbf{2912}	&	5.8 	&	2	&	0.0 	\\
 0510030-04	&&		4122 		&	2400.7 	&	3111 	&	24.5 	&&	\textbf{3927}	&	0.9 	&	20.8 	&&		3979 		&	41.7 	&	21.8 	&&	\textbf{3927}	&	89.1 	&	2	&	20.8 	&&	\textbf{3927}	&	85.1 	&	2	&	20.8 	\\
 0510030-05	&&	\textbf{3046}	&	2193.2 	&	2659 	&	12.7 	&&		3109 		&	2.0 	&	14.5 	&&	\textbf{3046}	&	10.1 	&	12.7 	&&	\textbf{3046}	&	10.1 	&	1	&	12.7 	&&	\textbf{3046}	&	10.1 	&	1	&	12.7 	\\
 0510030-06	&&	\textbf{3122}	&	2503.7 	&	2915 	&	6.6 	&&	\textbf{3122}	&	0.3 	&	6.6 	&&	\textbf{3122}	&	10.6 	&	6.6 	&&	\textbf{3122}	&	10.6 	&	1	&	6.6 	&&	\textbf{3122}	&	10.6 	&	1	&	6.6 	\\
 0510030-07	&&	\textbf{3445}	&	1112.6 	&	3244 	&	5.8 	&&		3500 		&	0.6 	&	7.3 	&&		3500 		&	5.0 	&	7.3 	&&	\textbf{3445}	&	13.4 	&	2	&	5.8 	&&	\textbf{3445}	&	13.0 	&	2	&	5.8 	\\
 0510030-08	&&	\textbf{2556}	&	3161.6 	&	2375 	&	7.1 	&&		2644 		&	1.5 	&	10.2 	&&	\textbf{2556}	&	2.6 	&	7.1 	&&	\textbf{2556}	&	2.6 	&	1	&	7.1 	&&	\textbf{2556}	&	2.6 	&	1	&	7.1 	\\
 0510030-09	&&	\textbf{2350}	&	1641.1 	&	2350 	&	0.0 	&&		2423 		&	0.8 	&	3.0 	&&	\textbf{2350}	&	1.6 	&	0.0 	&&	\textbf{2350}	&	1.6 	&	1	&	0.0 	&&	\textbf{2350}	&	1.6 	&	1	&	0.0 	\\
 0510030-10	&&		3172 		&	1849.5 	&	2467 	&	22.2 	&&	\textbf{3022}	&	1.3 	&	18.4 	&&	\textbf{3022}	&	14.9 	&	18.4 	&&	\textbf{3022}	&	14.9 	&	1	&	18.4 	&&	\textbf{3022}	&	14.9 	&	1	&	18.4 	\\
 0510030-11	&&		3591 		&	2424.5 	&	2778 	&	22.6 	&&		3508 		&	0.6 	&	20.8 	&&		3521 		&	15.6 	&	21.1 	&&	\textbf{3488}	&	31.5 	&	2	&	20.4 	&&	\textbf{3488}	&	31.6 	&	2	&	20.4 	\\
 0510030-12	&&		3226 		&	2388.1 	&	2461 	&	23.7 	&&	\textbf{3055}	&	0.3 	&	19.4 	&&	\textbf{3055}	&	14.0 	&	19.4 	&&	\textbf{3055}	&	14.0 	&	1	&	19.4 	&&	\textbf{3055}	&	14.0 	&	1	&	19.4 	\\
 0510030-13	&&		3814 		&	112.2 	&	2767 	&	27.5 	&&		3697 		&	1.6 	&	25.2 	&&		3677 		&	94.8 	&	24.7 	&&	\textbf{3660}	&	220.8 	&	2	&	24.4 	&&	\textbf{3660}	&	217.1 	&	2	&	24.4 	\\
 0510030-14	&&		3873 		&	775.4 	&	3054 	&	21.1 	&&	\textbf{3771}	&	0.7 	&	19.0 	&&	\textbf{3771}	&	13.1 	&	19.0 	&&	\textbf{3771}	&	13.1 	&	1	&	19.0 	&&	\textbf{3771}	&	13.1 	&	1	&	19.0 	\\
 0510030-15	&&		3096 		&	199.2 	&	2373 	&	23.4 	&&	\textbf{3001}	&	1.1 	&	20.9 	&&	\textbf{3001}	&	7.6 	&	20.9 	&&	\textbf{3001}	&	7.6 	&	1	&	20.9 	&&	\textbf{3001}	&	7.6 	&	1	&	20.9 	\\
 0510030-16	&&	\textbf{2996}	&	283.9 	&	2523 	&	15.8 	&&		3058 		&	3.0 	&	17.5 	&&		2997 		&	14.5 	&	15.8 	&&	\textbf{2996}	&	31.0 	&	2	&	15.8 	&&	\textbf{2996}	&	27.8 	&	2	&	15.8 	\\
 0510030-17	&&		2801 		&	137.7 	&	2347 	&	16.2 	&&		2878 		&	1.4 	&	18.5 	&&		2801 		&	6.8 	&	16.2 	&&	\textbf{2795}	&	17.8 	&	2	&	16.0 	&&	\textbf{2795}	&	18.0 	&	2	&	16.0 	\\
 0510030-18	&&		3294 		&	3476.8 	&	2688 	&	18.4 	&&		3320 		&	0.5 	&	19.0 	&&	\textbf{3249}	&	10.8 	&	17.3 	&&	\textbf{3249}	&	10.8 	&	1	&	17.3 	&&	\textbf{3249}	&	10.8 	&	1	&	17.3 	\\
 0510030-19	&&		3170 		&	1328.2 	&	2406 	&	24.1 	&&	\textbf{3018}	&	0.2 	&	20.3 	&&	\textbf{3018}	&	11.6 	&	20.3 	&&	\textbf{3018}	&	11.6 	&	1	&	20.3 	&&	\textbf{3018}	&	11.6 	&	1	&	20.3 	\\
 0510030-20	&&	\textbf{2933}	&	112.3 	&	2821 	&	3.8 	&&	\textbf{2933}	&	0.2 	&	3.8 	&&	\textbf{2933}	&	2.0 	&	3.8 	&&	\textbf{2933}	&	2.0 	&	1	&	3.8 	&&	\textbf{2933}	&	2.0 	&	1	&	3.8 	\\
 0510030-21	&&	\textbf{2450}	&	1193.4 	&	2334 	&	4.7 	&&	\textbf{2450}	&	0.5 	&	4.7 	&&	\textbf{2450}	&	9.4 	&	4.7 	&&	\textbf{2450}	&	9.4 	&	1	&	4.7 	&&	\textbf{2450}	&	9.4 	&	1	&	4.7 	\\
 0510030-22	&&	\textbf{2453}	&	1543.9 	&	2453 	&	0.0 	&&	\textbf{2453}	&	0.3 	&	0.0 	&&	\textbf{2453}	&	6.7 	&	0.0 	&&	\textbf{2453}	&	6.7 	&	1	&	0.0 	&&	\textbf{2453}	&	6.7 	&	1	&	0.0 	\\
 0510030-23	&&	\textbf{2257}	&	1082.6 	&	2140 	&	5.2 	&&	\textbf{2257}	&	0.3 	&	5.2 	&&	\textbf{2257}	&	6.9 	&	5.2 	&&	\textbf{2257}	&	6.9 	&	1	&	5.2 	&&	\textbf{2257}	&	6.9 	&	1	&	5.2 	\\
 0510030-24	&&	\textbf{2206}	&	2703.0 	&	2080 	&	5.7 	&&		2265 		&	0.5 	&	8.2 	&&		2208 		&	4.4 	&	5.8 	&&	\textbf{2206}	&	10.5 	&	2	&	5.7 	&&	\textbf{2206}	&	10.3 	&	2	&	5.7 	\\
 0510030-25	&&	\textbf{2357}	&	70.8 	&	2357 	&	0.0 	&&	\textbf{2357}	&	0.6 	&	0.0 	&&	\textbf{2357}	&	5.8 	&	0.0 	&&	\textbf{2357}	&	5.8 	&	1	&	0.0 	&&	\textbf{2357}	&	5.8 	&	1	&	0.0 	\\
 0510030-26	&&	\textbf{2168}	&	1205.5 	&	2168 	&	0.0 	&&	\textbf{2168}	&	0.3 	&	0.0 	&&	\textbf{2168}	&	3.7 	&	0.0 	&&	\textbf{2168}	&	3.7 	&	1	&	0.0 	&&	\textbf{2168}	&	3.7 	&	1	&	0.0 	\\
 0510030-27	&&		3202 		&	1013.5 	&	2685 	&	16.1 	&&	\textbf{3103}	&	0.4 	&	13.5 	&&		3142 		&	26.6 	&	14.5 	&&	\textbf{3103}	&	52.5 	&	2	&	13.5 	&&	\textbf{3103}	&	71.2 	&	2	&	13.5 	\\
 0510030-28	&&	\textbf{2902}	&	1462.3 	&	2691 	&	7.3 	&&		3006 		&	0.3 	&	10.5 	&&		2943 		&	6.7 	&	8.6 	&&	\textbf{2902}	&	78.7 	&	7	&	7.3 	&&	\textbf{2902}	&	81.1 	&	7	&	7.3 	\\
 0510030-29	&&	\textbf{2200}	&	1290.5 	&	2200 	&	0.0 	&&		2260 		&	0.7 	&	2.7 	&&	\textbf{2200}	&	6.4 	&	0.0 	&&	\textbf{2200}	&	6.4 	&	1	&	0.0 	&&	\textbf{2200}	&	6.4 	&	1	&	0.0 	\\
 0510030-30	&&	\textbf{2194}	&	466.5 	&	2194 	&	0.0 	&&		2394 		&	1.5 	&	8.4 	&&	\textbf{2194}	&	2.4 	&	0.0 	&&	\textbf{2194}	&	2.4 	&	1	&	0.0 	&&	\textbf{2194}	&	2.4 	&	1	&	0.0 	\\
\bhline{1pt}
\end{tabular}
\end{table}

\begin{table}[htbp]
\centering \fontsize{6pt}{6.8pt}\selectfont
\setlength{\tabcolsep}{3.8pt}
\caption{MMR-MKP results with $m=10,n=100$}
\label{tbl:mkp-10100}
\begin{tabular}{ll*{22}{r}}
\hline
                             && \multicolumn{4}{c}{B\&C}                  && \multicolumn{3}{c}{Fix}                  && \multicolumn{3}{c}{DS}                  && \multicolumn{4}{c}{iDS-H}                                                                  && \multicolumn{4}{c}{iDS-B}                                                                      \\ \cline{3-6} \cline{8-10} \cline{12-14} \cline{16-19} \cline{21-24} 
\multicolumn{1}{c}{instance} && \multicolumn{1}{c}{obj} & \multicolumn{1}{c}{time} & \multicolumn{1}{c}{LB} & \multicolumn{1}{c}{\%gap} && \multicolumn{1}{c}{obj} & \multicolumn{1}{c}{time} & \multicolumn{1}{c}{\%gap}  && \multicolumn{1}{c}{obj} & \multicolumn{1}{c}{time} & \multicolumn{1}{c}{\%gap}  && \multicolumn{1}{c}{obj} & \multicolumn{1}{c}{time} & \multicolumn{1}{c}{iter} & \multicolumn{1}{c}{\%gap} && \multicolumn{1}{c}{obj} & \multicolumn{1}{c}{time} & \multicolumn{1}{c}{iter} & \multicolumn{1}{c}{\%gap} \\ \bhline{1.5pt}
 1010010-01	&&	\textbf{798}	&	872.3 	&	798	&	0.0 	&&		847		&	15.1 	&	5.8 	&&	\textbf{798}	&	21.1 	&	0.0 	&&	\textbf{798}	&	21.1 	&	1 	&	0.0 	&&	\textbf{798}	&	21.1 	&	1 	&	0.0 	\\
 1010010-02	&&	\textbf{474}	&	1170.3 	&	474	&	0.0 	&&	\textbf{474}	&	3.7 	&	0.0 	&&	\textbf{474}	&	3.5 	&	0.0 	&&	\textbf{474}	&	3.5 	&	1 	&	0.0 	&&	\textbf{474}	&	3.5 	&	1 	&	0.0 	\\
 1010010-03	&&	\textbf{759}	&	3600.0 	&	492	&	35.2 	&&		789		&	9.4 	&	37.6 	&&	\textbf{759}	&	15.3 	&	35.2 	&&	\textbf{759}	&	15.3 	&	1 	&	35.2 	&&	\textbf{759}	&	15.3 	&	1 	&	35.2 	\\
 1010010-04	&&		1195		&	2648.4 	&	650	&	45.6 	&&		1148		&	47.4 	&	43.4 	&&	\textbf{1093}	&	69.6 	&	40.5 	&&	\textbf{1093}	&	69.6 	&	1 	&	40.5 	&&	\textbf{1093}	&	69.6 	&	1 	&	40.5 	\\
 1010010-05	&&	\textbf{753}	&	3217.5 	&	753	&	0.0 	&&		881		&	11.7 	&	14.5 	&&		805 		&	20.9 	&	6.5 	&&	\textbf{753}	&	44.7 	&	2 	&	0.0 	&&	\textbf{753}	&	47.9 	&	2 	&	0.0 	\\
 1010010-06	&&	\textbf{807}	&	1633.0 	&	451	&	44.1 	&&	\textbf{807}	&	10.9 	&	44.1 	&&	\textbf{807}	&	33.7 	&	44.1 	&&	\textbf{807}	&	33.7 	&	1 	&	44.1 	&&	\textbf{807}	&	33.7 	&	1 	&	44.1 	\\
 1010010-07	&&	\textbf{640}	&	515.2 	&	640	&	0.0 	&&		699		&	7.5 	&	8.4 	&&		666 		&	10.5 	&	3.9 	&&	\textbf{640}	&	117.0 	&	8 	&	0.0 	&&	\textbf{640}	&	113.4 	&	8 	&	0.0 	\\
 1010010-08	&&	\textbf{684}	&	1085.3 	&	684	&	0.0 	&&		731		&	6.7 	&	6.4 	&&	\textbf{684}	&	6.8 	&	0.0 	&&	\textbf{684}	&	6.8 	&	1 	&	0.0 	&&	\textbf{684}	&	6.8 	&	1 	&	0.0 	\\
 1010010-09	&&	\textbf{611}	&	570.1 	&	611	&	0.0 	&&		613		&	3.2 	&	0.3 	&&		613 		&	6.2 	&	0.3 	&&	\textbf{611}	&	15.6 	&	2 	&	0.0 	&&	\textbf{611}	&	13.5 	&	2 	&	0.0 	\\
 1010010-10	&&	\textbf{593}	&	1057.0 	&	593	&	0.0 	&&	\textbf{593}	&	3.5 	&	0.0 	&&	\textbf{593}	&	5.9 	&	0.0 	&&	\textbf{593}	&	5.9 	&	1 	&	0.0 	&&	\textbf{593}	&	5.9 	&	1 	&	0.0 	\\
 1010010-11	&&	\textbf{552}	&	1912.1 	&	552	&	0.0 	&&	\textbf{552}	&	3.3 	&	0.0 	&&	\textbf{552}	&	4.1 	&	0.0 	&&	\textbf{552}	&	4.1 	&	1 	&	0.0 	&&	\textbf{552}	&	4.1 	&	1 	&	0.0 	\\
 1010010-12	&&	\textbf{661}	&	3115.1 	&	661	&	0.0 	&&	\textbf{661}	&	1.6 	&	0.0 	&&	\textbf{661}	&	3.0 	&	0.0 	&&	\textbf{661}	&	3.0 	&	1 	&	0.0 	&&	\textbf{661}	&	3.0 	&	1 	&	0.0 	\\
 1010010-13	&&	\textbf{523}	&	1485.7 	&	523	&	0.0 	&&	\textbf{523}	&	6.0 	&	0.0 	&&	\textbf{523}	&	2.7 	&	0.0 	&&	\textbf{523}	&	2.7 	&	1 	&	0.0 	&&	\textbf{523}	&	2.7 	&	1 	&	0.0 	\\
 1010010-14	&&		936		&	3445.7 	&	305	&	61.0 	&&	\textbf{729}	&	9.8 	&	49.9 	&&	\textbf{729}	&	10.6 	&	58.2 	&&	\textbf{729}	&	10.6 	&	1 	&	49.9 	&&	\textbf{729}	&	10.6 	&	1 	&	49.9 	\\
 1010010-15	&&	\textbf{724}	&	3071.9 	&	724	&	0.0 	&&	\textbf{724}	&	7.9 	&	0.0 	&&	\textbf{724}	&	9.7 	&	0.0 	&&	\textbf{724}	&	9.7 	&	1 	&	0.0 	&&	\textbf{724}	&	9.7 	&	1 	&	0.0 	\\
 1010010-16	&&	\textbf{721}	&	2072.1 	&	721	&	0.0 	&&		774		&	10.5 	&	6.8 	&&		722 		&	11.2 	&	0.1 	&&	\textbf{721}	&	23.4 	&	2 	&	0.0 	&&	\textbf{721}	&	22.5 	&	2 	&	0.0 	\\
 1010010-17	&&		694		&	3546.0 	&	112	&	47.6 	&&		728		&	25.8 	&	50.0 	&&	\textbf{608}	&	17.6 	&	81.6 	&&	\textbf{608}	&	17.6 	&	1 	&	40.1 	&&	\textbf{608}	&	17.6 	&	1 	&	40.1 	\\
 1010010-18	&&	\textbf{447}	&	2487.3 	&	447	&	0.0 	&&	\textbf{447}	&	9.1 	&	0.0 	&&	\textbf{447}	&	5.6 	&	0.0 	&&	\textbf{447}	&	5.6 	&	1 	&	0.0 	&&	\textbf{447}	&	5.6 	&	1 	&	0.0 	\\
 1010010-19	&&	\textbf{553}	&	531.1 	&	553	&	0.0 	&&		651		&	8.8 	&	15.1 	&&	\textbf{553}	&	10.0 	&	0.0 	&&	\textbf{553}	&	10.0 	&	1 	&	0.0 	&&	\textbf{553}	&	10.0 	&	1 	&	0.0 	\\
 1010010-20	&&		768		&	3600.0 	&	59	&	60.2 	&&		611		&	30.5 	&	49.9 	&&		577 		&	24.4 	&	89.8 	&&	\textbf{568}	&	477.7 	&	10 	&	46.1 	&&	\textbf{568}	&	443.2 	&	10 	&	46.1 	\\
 1010010-21	&&	\textbf{365}	&	78.9 	&	365	&	0.0 	&&		386		&	1.0 	&	5.4 	&&		386 		&	1.1 	&	5.4 	&&	\textbf{365}	&	3.7 	&	2 	&	0.0 	&&	\textbf{365}	&	4.5 	&	2 	&	0.0 	\\
 1010010-22	&&	\textbf{569}	&	1834.2 	&	569	&	0.0 	&&		572		&	5.4 	&	0.5 	&&		572 		&	13.2 	&	0.5 	&&	\textbf{569}	&	79.5 	&	4 	&	0.0 	&&	\textbf{569}	&	77.5 	&	4 	&	0.0 	\\
 1010010-23	&&	\textbf{371}	&	714.5 	&	371	&	0.0 	&&	\textbf{371}	&	3.7 	&	0.0 	&&	\textbf{371}	&	3.0 	&	0.0 	&&	\textbf{371}	&	3.0 	&	1 	&	0.0 	&&	\textbf{371}	&	3.0 	&	1 	&	0.0 	\\
 1010010-24	&&	\textbf{449}	&	527.1 	&	449	&	0.0 	&&	\textbf{449}	&	2.7 	&	0.0 	&&	\textbf{449}	&	3.9 	&	0.0 	&&	\textbf{449}	&	3.9 	&	1 	&	0.0 	&&	\textbf{449}	&	3.9 	&	1 	&	0.0 	\\
 1010010-25	&&	\textbf{280}	&	183.7 	&	280	&	0.0 	&&		337		&	1.6 	&	16.9 	&&	\textbf{280}	&	1.1 	&	0.0 	&&	\textbf{280}	&	1.1 	&	1 	&	0.0 	&&	\textbf{280}	&	1.1 	&	1 	&	0.0 	\\
 1010010-26	&&	\textbf{466}	&	1497.9 	&	466	&	0.0 	&&		552		&	9.2 	&	15.6 	&&	\textbf{466}	&	6.8 	&	0.0 	&&	\textbf{466}	&	6.8 	&	1 	&	0.0 	&&	\textbf{466}	&	6.8 	&	1 	&	0.0 	\\
 1010010-27	&&	\textbf{512}	&	949.7 	&	512	&	0.0 	&&		554		&	6.9 	&	7.6 	&&	\textbf{512}	&	7.4 	&	0.0 	&&	\textbf{512}	&	7.4 	&	1 	&	0.0 	&&	\textbf{512}	&	7.4 	&	1 	&	0.0 	\\
 1010010-28	&&	\textbf{335}	&	156.9 	&	335	&	0.0 	&&	\textbf{335}	&	0.7 	&	0.0 	&&	\textbf{335}	&	1.0 	&	0.0 	&&	\textbf{335}	&	1.0 	&	1 	&	0.0 	&&	\textbf{335}	&	1.0 	&	1 	&	0.0 	\\
 1010010-29	&&	\textbf{396}	&	394.4 	&	396	&	0.0 	&&	\textbf{396}	&	1.8 	&	0.0 	&&	\textbf{396}	&	0.9 	&	0.0 	&&	\textbf{396}	&	0.9 	&	1 	&	0.0 	&&	\textbf{396}	&	0.9 	&	1 	&	0.0 	\\
 1010010-30	&&	\textbf{310}	&	122.5 	&	310	&	0.0 	&&	\textbf{310}	&	1.2 	&	0.0 	&&	\textbf{310}	&	2.0 	&	0.0 	&&	\textbf{310}	&	2.0 	&	1 	&	0.0 	&&	\textbf{310}	&	2.0 	&	1 	&	0.0 	\\
\hline																																															
 1010020-01	&&		1990		&	2696.6 	&	1230	&	38.2 	&&		1877		&	7.0 	&	34.5 	&&		1883 		&	36.4 	&	34.7 	&&	\textbf{1827}	&	76.2 	&	2 	&	32.7 	&&	\textbf{1827}	&	76.3 	&	2 	&	32.7 	\\
 1010020-02	&&		1798		&	3240.1 	&	1126	&	37.4 	&&		1746		&	4.9 	&	35.5 	&&	\textbf{1666}	&	14.3 	&	32.4 	&&	\textbf{1666}	&	14.3 	&	1 	&	32.4 	&&	\textbf{1666}	&	14.3 	&	1 	&	32.4 	\\
 1010020-03	&&	\textbf{1671}	&	1065.1 	&	1671	&	0.0 	&&	\textbf{1671}	&	7.9 	&	0.0 	&&	\textbf{1671}	&	5.6 	&	0.0 	&&	\textbf{1671}	&	5.6 	&	1 	&	0.0 	&&	\textbf{1671}	&	5.6 	&	1 	&	0.0 	\\
 1010020-04	&&		2052		&	1063.6 	&	1198	&	41.6 	&&		2074		&	31.3 	&	42.2 	&&		2082 		&	92.8 	&	42.5 	&&	\textbf{2023}	&	461.8 	&	4 	&	40.8 	&&	\textbf{2023}	&	465.2 	&	4 	&	40.8 	\\
 1010020-05	&&		2213		&	1174.4 	&	1209	&	45.4 	&&	\textbf{1870}	&	0.7 	&	35.3 	&&	\textbf{1870}	&	4.9 	&	35.3 	&&	\textbf{1870}	&	4.9 	&	1 	&	35.3 	&&	\textbf{1870}	&	4.9 	&	1 	&	35.3 	\\
 1010020-06	&&		2514		&	3600.0 	&	1361	&	45.9 	&&		2282		&	7.5 	&	40.4 	&&	\textbf{2232}	&	44.6 	&	39.0 	&&	\textbf{2232}	&	44.6 	&	1 	&	39.0 	&&	\textbf{2232}	&	44.6 	&	1 	&	39.0 	\\
 1010020-07	&&		2019		&	2043.1 	&	1481	&	26.6 	&&		2019		&	3.8 	&	26.6 	&&		2019 		&	7.9 	&	26.6 	&&	\textbf{1969}	&	22.9 	&	2 	&	24.8 	&&	\textbf{1969}	&	21.9 	&	2 	&	24.8 	\\
 1010020-08	&&		2243		&	2544.7 	&	1490	&	33.6 	&&	\textbf{1997}	&	5.0 	&	25.4 	&&	\textbf{1997}	&	18.4 	&	25.4 	&&	\textbf{1997}	&	18.4 	&	1 	&	25.4 	&&	\textbf{1997}	&	18.4 	&	1 	&	25.4 	\\
 1010020-09	&&	\textbf{1652}	&	2510.8 	&	1652	&	0.0 	&&		1687		&	1.2 	&	2.1 	&&		1687 		&	2.8 	&	2.1 	&&	\textbf{1652}	&	10.4 	&	2 	&	0.0 	&&	\textbf{1652}	&	10.1 	&	2 	&	0.0 	\\
 1010020-10	&&		2424		&	3276.4 	&	1279	&	47.2 	&&		2318		&	8.8 	&	44.8 	&&	\textbf{2230}	&	27.8 	&	42.6 	&&	\textbf{2230}	&	27.8 	&	1 	&	42.6 	&&	\textbf{2230}	&	27.8 	&	1 	&	42.6 	\\
 1010020-11	&&		2042		&	2556.9 	&	810	&	54.3 	&&		1866		&	23.3 	&	50.0 	&&		1795 		&	74.9 	&	54.9 	&&	\textbf{1753}	&	450.4 	&	5 	&	46.8 	&&	\textbf{1753}	&	437.2 	&	5 	&	46.8 	\\
 1010020-12	&&		1916		&	3088.7 	&	804	&	56.3 	&&		1675		&	7.8 	&	50.0 	&&		1643 		&	63.3 	&	51.1 	&&	\textbf{1636}	&	198.4 	&	3 	&	48.8 	&&	\textbf{1636}	&	203.0 	&	3 	&	48.8 	\\
 1010020-13	&&		1519		&	3171.3 	&	565	&	60.2 	&&	\textbf{1210}	&	2.3 	&	50.0 	&&	\textbf{1210}	&	2.9 	&	53.3 	&&	\textbf{1210}	&	2.9 	&	1 	&	50.0 	&&	\textbf{1210}	&	2.9 	&	1 	&	50.0 	\\
 1010020-14	&&		2659		&	3098.9 	&	976	&	59.6 	&&	\textbf{2148}	&	21.6 	&	50.0 	&&		2152 		&	71.8 	&	54.6 	&&	\textbf{2148}	&	287.7 	&	3 	&	50.0 	&&	\textbf{2148}	&	261.4 	&	3 	&	50.0 	\\
 1010020-15	&&		1902		&	1672.3 	&	1041	&	45.3 	&&		1793		&	7.9 	&	41.9 	&&	\textbf{1656}	&	11.0 	&	37.1 	&&	\textbf{1656}	&	11.0 	&	1 	&	37.1 	&&	\textbf{1656}	&	11.0 	&	1 	&	37.1 	\\
 1010020-16	&&		1892		&	3510.8 	&	759	&	54.4 	&&		1725		&	9.9 	&	50.0 	&&	\textbf{1595}	&	4.8 	&	52.4 	&&	\textbf{1595}	&	4.8 	&	1 	&	45.9 	&&	\textbf{1595}	&	4.8 	&	1 	&	45.9 	\\
 1010020-17	&&		2583		&	5901.5 	&	1063	&	58.8 	&&		2005		&	2.2 	&	47.0 	&&	\textbf{1954}	&	6.7 	&	45.6 	&&	\textbf{1954}	&	6.7 	&	1 	&	45.6 	&&	\textbf{1954}	&	6.7 	&	1 	&	45.6 	\\
 1010020-18	&&		1866		&	1591.0 	&	1133	&	39.3 	&&		1826		&	6.5 	&	38.0 	&&	\textbf{1718}	&	6.6 	&	34.1 	&&	\textbf{1718}	&	6.6 	&	1 	&	34.1 	&&	\textbf{1718}	&	6.6 	&	1 	&	34.1 	\\
 1010020-19	&&		1519		&	4972.5 	&	564	&	56.4 	&&		1324		&	6.9 	&	50.0 	&&	\textbf{1293}	&	7.8 	&	56.4 	&&	\textbf{1293}	&	7.8 	&	1 	&	48.8 	&&	\textbf{1293}	&	7.8 	&	1 	&	48.8 	\\
 1010020-20	&&		1628		&	5876.1 	&	562	&	57.0 	&&		1399		&	19.7 	&	50.0 	&&	\textbf{1389}	&	10.2 	&	59.5 	&&	\textbf{1389}	&	10.2 	&	1 	&	49.6 	&&	\textbf{1389}	&	10.2 	&	1 	&	49.6 	\\
 1010020-21	&&	\textbf{942}	&	2277.9 	&	789	&	16.2 	&&		949		&	2.6 	&	16.9 	&&	\textbf{942}	&	3.6 	&	16.2 	&&	\textbf{942}	&	3.6 	&	1 	&	16.2 	&&	\textbf{942}	&	3.6 	&	1 	&	16.2 	\\
 1010020-22	&&		1625		&	2786.9 	&	861	&	47.0 	&&		1609		&	23.6 	&	46.5 	&&		1540 		&	77.6 	&	44.1 	&&	\textbf{1528}	&	169.8 	&	2 	&	43.7 	&&	\textbf{1528}	&	169.6 	&	2 	&	43.7 	\\
 1010020-23	&&		1045		&	2962.4 	&	779	&	25.5 	&&		1042		&	3.0 	&	25.2 	&&	\textbf{988}	&	3.9 	&	21.2 	&&	\textbf{988}	&	3.9 	&	1 	&	21.2 	&&	\textbf{988}	&	3.9 	&	1 	&	21.2 	\\
 1010020-24	&&		1563		&	3515.6 	&	1081	&	30.8 	&&	\textbf{1489}	&	1.0 	&	27.4 	&&	\textbf{1489}	&	12.9 	&	27.4 	&&	\textbf{1489}	&	12.9 	&	1 	&	27.4 	&&	\textbf{1489}	&	12.9 	&	1 	&	27.4 	\\
 1010020-25	&&	\textbf{901}	&	522.9 	&	901	&	0.0 	&&	\textbf{901}	&	0.7 	&	0.0 	&&	\textbf{901}	&	1.1 	&	0.0 	&&	\textbf{901}	&	1.1 	&	1 	&	0.0 	&&	\textbf{901}	&	1.1 	&	1 	&	0.0 	\\
 1010020-26	&&		1408		&	3237.9 	&	1039	&	26.2 	&&	\textbf{1352}	&	4.9 	&	23.2 	&&		1354 		&	4.7 	&	23.3 	&&	\textbf{1352}	&	11.3 	&	2 	&	23.2 	&&	\textbf{1352}	&	10.5 	&	2 	&	23.2 	\\
 1010020-27	&&		1376		&	1086.5 	&	813	&	40.9 	&&		1235		&	4.0 	&	34.2 	&&	\textbf{1227}	&	8.6 	&	33.7 	&&	\textbf{1227}	&	8.6 	&	1 	&	33.7 	&&	\textbf{1227}	&	8.6 	&	1 	&	33.7 	\\
 1010020-28	&&	\textbf{924}	&	1032.3 	&	924	&	0.0 	&&		963		&	1.8 	&	4.0 	&&	\textbf{924}	&	2.0 	&	0.0 	&&	\textbf{924}	&	2.0 	&	1 	&	0.0 	&&	\textbf{924}	&	2.0 	&	1 	&	0.0 	\\
 1010020-29	&&	\textbf{1396}	&	1786.9 	&	1396	&	0.0 	&&	\textbf{1396}	&	1.6 	&	0.0 	&&	\textbf{1396}	&	2.9 	&	0.0 	&&	\textbf{1396}	&	2.9 	&	1 	&	0.0 	&&	\textbf{1396}	&	2.9 	&	1 	&	0.0 	\\
 1010020-30	&&	\textbf{981}	&	406.6 	&	981	&	0.0 	&&		988		&	1.6 	&	0.7 	&&		988 		&	0.9 	&	0.7 	&&	\textbf{981}	&	2.2 	&	2 	&	0.0 	&&	\textbf{981}	&	2.1 	&	2 	&	0.0 	\\
\hline																																															
 1010030-01	&&		3218		&	2668.4 	&	2137	&	33.6 	&&	\textbf{3027}	&	0.8 	&	29.4 	&&	\textbf{3027}	&	16.1 	&	29.4 	&&	\textbf{3027}	&	16.1 	&	1 	&	29.4 	&&	\textbf{3027}	&	16.1 	&	1 	&	29.4 	\\
 1010030-02	&&		3437		&	3582.6 	&	2071	&	39.7 	&&		3306		&	7.6 	&	37.4 	&&	\textbf{3219}	&	26.7 	&	35.7 	&&	\textbf{3219}	&	26.7 	&	1 	&	35.7 	&&	\textbf{3219}	&	26.7 	&	1 	&	35.7 	\\
 1010030-03	&&	\textbf{3200}	&	660.3 	&	2267	&	29.2 	&&	\textbf{3200}	&	5.1 	&	29.2 	&&	\textbf{3200}	&	8.5 	&	29.2 	&&	\textbf{3200}	&	8.5 	&	1 	&	29.2 	&&	\textbf{3200}	&	8.5 	&	1 	&	29.2 	\\
 1010030-04	&&		4224		&	340.3 	&	2623	&	37.9 	&&		4069		&	24.0 	&	35.5 	&&		4068 		&	348.7 	&	35.5 	&&	\textbf{4057}	&	2325.0 	&	5 	&	35.3 	&&	\textbf{4057}	&	2534.7 	&	5 	&	35.3 	\\
 1010030-05	&&		3590		&	1434.4 	&	2107	&	41.3 	&&		3530		&	7.0 	&	40.3 	&&	\textbf{3336}	&	22.7 	&	36.8 	&&	\textbf{3336}	&	22.7 	&	1 	&	36.8 	&&	\textbf{3336}	&	22.7 	&	1 	&	36.8 	\\
 1010030-06	&&		4557		&	408.8 	&	2667	&	41.5 	&&		4415		&	16.8 	&	39.6 	&&		4328 		&	286.0 	&	38.4 	&&	\textbf{4284}	&	883.6 	&	2 	&	37.7 	&&	\textbf{4284}	&	864.1 	&	2 	&	37.7 	\\
 1010030-07	&&		2790		&	2027.7 	&	1660	&	40.5 	&&		2845		&	4.6 	&	41.7 	&&	\textbf{2628}	&	6.5 	&	36.8 	&&	\textbf{2628}	&	6.5 	&	1 	&	36.8 	&&	\textbf{2628}	&	6.5 	&	1 	&	36.8 	\\
 1010030-08	&&	\textbf{3183}	&	2124.8 	&	2263	&	28.9 	&&		3268		&	7.9 	&	30.8 	&&		3229 		&	18.3 	&	29.9 	&&	\textbf{3183}	&	39.0 	&	2 	&	28.9 	&&	\textbf{3183}	&	37.2 	&	2 	&	28.9 	\\
 1010030-09	&&		3255		&	2539.4 	&	2174	&	33.2 	&&	\textbf{2937}	&	0.4 	&	26.0 	&&	\textbf{2937}	&	8.3 	&	26.0 	&&	\textbf{2937}	&	8.3 	&	1 	&	26.0 	&&	\textbf{2937}	&	8.3 	&	1 	&	26.0 	\\
 1010030-10	&&		3773		&	848.0 	&	2253	&	40.3 	&&	\textbf{3601}	&	2.8 	&	37.4 	&&	\textbf{3601}	&	33.5 	&	37.4 	&&	\textbf{3601}	&	33.5 	&	1 	&	37.4 	&&	\textbf{3601}	&	33.5 	&	1 	&	37.4 	\\
 1010030-11	&&		3375		&	1677.0 	&	1861	&	44.9 	&&		3326		&	8.6 	&	44.0 	&&		3278 		&	130.4 	&	43.2 	&&	\textbf{3263}	&	427.1 	&	3 	&	43.0 	&&	\textbf{3263}	&	424.0 	&	3 	&	43.0 	\\
 1010030-12	&&		3965		&	973.2 	&	2152	&	45.7 	&&		3622		&	9.2 	&	40.6 	&&		3618 		&	137.7 	&	40.5 	&&	\textbf{3608}	&	297.6 	&	2 	&	40.4 	&&	\textbf{3608}	&	300.7 	&	2 	&	40.4 	\\
 1010030-13	&&		3584		&	2495.2 	&	1914	&	46.6 	&&		3623		&	5.3 	&	47.2 	&&	\textbf{3479}	&	173.1 	&	45.0 	&&	\textbf{3479}	&	172.9 	&	1 	&	45.0 	&&	\textbf{3479}	&	173.1 	&	1 	&	45.0 	\\
 1010030-14	&&		4587		&	475.4 	&	2077	&	54.7 	&&	\textbf{3937}	&	32.8 	&	47.2 	&&	\textbf{3937}	&	825.0 	&	47.2 	&&	\textbf{3937}	&	823.1 	&	1 	&	47.2 	&&	\textbf{3937}	&	825.0 	&	1 	&	47.2 	\\
 1010030-15	&&		3650		&	1745.7 	&	1987	&	45.6 	&&	\textbf{3337}	&	2.7 	&	40.5 	&&	\textbf{3337}	&	127.5 	&	40.5 	&&	\textbf{3337}	&	127.7 	&	1 	&	40.5 	&&	\textbf{3337}	&	127.5 	&	1 	&	40.5 	\\
 1010030-16	&&		3386		&	1511.6 	&	2054	&	39.3 	&&		3364		&	7.8 	&	38.9 	&&		3294 		&	47.8 	&	37.6 	&&	\textbf{3273}	&	111.7 	&	2 	&	37.2 	&&	\textbf{3273}	&	109.2 	&	2 	&	37.2 	\\
 1010030-17	&&		3956		&	1830.9 	&	1913	&	51.6 	&&	\textbf{3384}	&	18.6 	&	43.5 	&&	\textbf{3384}	&	137.2 	&	43.5 	&&	\textbf{3384}	&	137.7 	&	1 	&	43.5 	&&	\textbf{3384}	&	137.2 	&	1 	&	43.5 	\\
 1010030-18	&&		3600		&	729.0 	&	1228	&	65.0 	&&	\textbf{2522}	&	11.2 	&	50.0 	&&	\textbf{2522}	&	17.3 	&	51.3 	&&	\textbf{2522}	&	17.5 	&	1 	&	50.0 	&&	\textbf{2522}	&	17.3 	&	1 	&	50.0 	\\
 1010030-19	&&		3931		&	3600.0 	&	1746	&	55.6 	&&		3472		&	8.2 	&	49.7 	&&		3471 		&	207.1 	&	49.7 	&&	\textbf{3461}	&	902.6 	&	4 	&	49.6 	&&	\textbf{3461}	&	910.2 	&	4 	&	49.6 	\\
 1010030-20	&&		3422		&	2848.7 	&	1611	&	52.9 	&&		3190		&	11.0 	&	49.5 	&&	\textbf{3142}	&	93.1 	&	48.7 	&&	\textbf{3142}	&	93.8 	&	1 	&	48.7 	&&	\textbf{3142}	&	93.1 	&	1 	&	48.7 	\\
 1010030-21	&&		2327		&	1393.7 	&	1575	&	32.3 	&&		2264		&	1.5 	&	30.4 	&&		2264 		&	10.8 	&	30.4 	&&	\textbf{2251}	&	48.5 	&	3 	&	30.0 	&&	\textbf{2251}	&	48.9 	&	3 	&	30.0 	\\
 1010030-22	&&	\textbf{3200}	&	3600.0 	&	2178	&	31.9 	&&	\textbf{3200}	&	7.0 	&	31.9 	&&		3243 		&	565.3 	&	32.8 	&&		3214		&	3600.0 	&	6 	&	32.2 	&&		3214		&	3600.0 	&	6 	&	32.2 	\\
 1010030-23	&&		2610		&	2990.7 	&	1703	&	34.8 	&&		2473		&	3.3 	&	31.1 	&&	\textbf{2380}	&	10.7 	&	28.4 	&&	\textbf{2380}	&	10.7 	&	1 	&	28.4 	&&	\textbf{2380}	&	10.7 	&	1 	&	28.4 	\\
 1010030-24	&&		2638		&	618.9 	&	1790	&	32.1 	&&	\textbf{2385}	&	8.1 	&	24.9 	&&	\textbf{2385}	&	23.4 	&	24.9 	&&	\textbf{2385}	&	23.4 	&	1 	&	24.9 	&&	\textbf{2385}	&	23.4 	&	1 	&	24.9 	\\
 1010030-25	&&	\textbf{2059}	&	3069.8 	&	2059	&	0.0 	&&	\textbf{2059}	&	0.5 	&	0.0 	&&	\textbf{2059}	&	2.8 	&	0.0 	&&	\textbf{2059}	&	2.8 	&	1 	&	0.0 	&&	\textbf{2059}	&	2.8 	&	1 	&	0.0 	\\
 1010030-26	&&		2871		&	1071.0 	&	1687	&	41.2 	&&		2677		&	8.3 	&	37.0 	&&		2644 		&	26.0 	&	36.2 	&&	\textbf{2589}	&	55.5 	&	2 	&	34.8 	&&	\textbf{2589}	&	57.9 	&	2 	&	34.8 	\\
 1010030-27	&&		2970		&	937.0 	&	1636	&	44.9 	&&		2673		&	5.7 	&	38.8 	&&	\textbf{2659}	&	85.6 	&	38.5 	&&	\textbf{2659}	&	86.2 	&	1 	&	38.5 	&&	\textbf{2659}	&	85.6 	&	1 	&	38.5 	\\
 1010030-28	&&		2420		&	3131.4 	&	1596	&	34.0 	&&		2266		&	2.0 	&	29.6 	&&	\textbf{2099}	&	1.2 	&	24.0 	&&	\textbf{2099}	&	1.2 	&	1 	&	24.0 	&&	\textbf{2099}	&	1.2 	&	1 	&	24.0 	\\
 1010030-29	&&	\textbf{2092}	&	3600.0 	&	1855	&	11.3 	&&	\textbf{2092}	&	1.1 	&	11.3 	&&	\textbf{2092}	&	3.7 	&	11.3 	&&	\textbf{2092}	&	3.7 	&	1 	&	11.3 	&&	\textbf{2092}	&	3.7 	&	1 	&	11.3 	\\
 1010030-30	&&	\textbf{2134}	&	1998.1 	&	2134	&	0.0 	&&	\textbf{2134}	&	0.4 	&	0.0 	&&	\textbf{2134}	&	1.5 	&	0.0 	&&	\textbf{2134}	&	1.5 	&	1 	&	0.0 	&&	\textbf{2134}	&	1.5 	&	1 	&	0.0 	\\
\bhline{1pt}
\end{tabular}
\end{table}

\begin{table}[htbp]
\centering \fontsize{6pt}{6.8pt}\selectfont
\setlength{\tabcolsep}{3.6pt}
\caption{MMR-MKP results with $m=5,n=250$}
\label{tbl:mkp-05250}
\begin{tabular}{ll*{22}{r}}
\hline
                             && \multicolumn{4}{c}{B\&C}                  && \multicolumn{3}{c}{Fix}                  && \multicolumn{3}{c}{DS}                  && \multicolumn{4}{c}{iDS-H}                                                                  && \multicolumn{4}{c}{iDS-B}                                                                      \\ \cline{3-6} \cline{8-10} \cline{12-14} \cline{16-19} \cline{21-24} 
\multicolumn{1}{c}{instance} && \multicolumn{1}{c}{obj} & \multicolumn{1}{c}{time} & \multicolumn{1}{c}{LB} & \multicolumn{1}{c}{\%gap} && \multicolumn{1}{c}{obj} & \multicolumn{1}{c}{time} & \multicolumn{1}{c}{\%gap}  && \multicolumn{1}{c}{obj} & \multicolumn{1}{c}{time} & \multicolumn{1}{c}{\%gap}  && \multicolumn{1}{c}{obj} & \multicolumn{1}{c}{time} & \multicolumn{1}{c}{iter} & \multicolumn{1}{c}{\%gap} && \multicolumn{1}{c}{obj} & \multicolumn{1}{c}{time} & \multicolumn{1}{c}{iter} & \multicolumn{1}{c}{\%gap} \\ \bhline{1.5pt}
 0525010-01	&&		1176 		&	3284.8 	&	481 	&	58.6 	&&		973 		&	9.4 	&	49.9 	&&		976 		&	71.8 	&	50.7 	&&	\textbf{940}	&	145.4 	&	2	&	48.2 	&&	\textbf{940}	&	144.1 	&	2	&	48.2 	\\
 0525010-02	&&		1378 		&	3600.0 	&	501 	&	63.6 	&&		993 		&	25.4 	&	49.5 	&&		993 		&	59.1 	&	49.5 	&&	\textbf{978}	&	123.6 	&	2	&	48.8 	&&	\textbf{978}	&	129.8 	&	2	&	48.8 	\\
 0525010-03	&&		1288 		&	1073.5 	&	353 	&	62.7 	&&		959 		&	8.3 	&	49.9 	&&		959 		&	110.9 	&	63.2 	&&	\textbf{956}	&	530.2 	&	4	&	49.8 	&&	\textbf{956}	&	529.3 	&	4	&	49.8 	\\
 0525010-04	&&		1943 		&	574.9 	&	600 	&	66.0 	&&	\textbf{1321}	&	121.2 	&	50.0 	&&	\textbf{1321}	&	575.0 	&	54.6 	&&	\textbf{1321}	&	575.0 	&	1	&	50.0 	&&	\textbf{1321}	&	575.0 	&	1	&	50.0 	\\
 0525010-05	&&		1203 		&	3133.8 	&	311 	&	61.0 	&&		937 		&	17.0 	&	49.9 	&&	\textbf{919}	&	65.7 	&	66.2 	&&	\textbf{919}	&	65.7 	&	1	&	49.0 	&&	\textbf{919}	&	65.7 	&	1	&	49.0 	\\
 0525010-06	&&		1554 		&	466.3 	&	645 	&	58.5 	&&	\textbf{1209}	&	18.1 	&	46.7 	&&	\textbf{1209}	&	202.1 	&	46.7 	&&	\textbf{1209}	&	202.1 	&	1	&	46.7 	&&	\textbf{1209}	&	202.1 	&	1	&	46.7 	\\
 0525010-07	&&		1472 		&	1959.1 	&	696 	&	52.7 	&&		1263 		&	63.8 	&	44.9 	&&	\textbf{1218}	&	298.1 	&	42.9 	&&	\textbf{1218}	&	298.1 	&	1	&	42.9 	&&	\textbf{1218}	&	298.1 	&	1	&	42.9 	\\
 0525010-08	&&		1580 		&	3221.4 	&	534 	&	66.0 	&&		1073 		&	10.5 	&	50.0 	&&		1073 		&	88.3 	&	50.2 	&&	\textbf{1067}	&	302.9 	&	3	&	49.7 	&&	\textbf{1067}	&	282.4 	&	3	&	49.7 	\\
 0525010-09	&&		1480 		&	2924.7 	&	661 	&	55.3 	&&	\textbf{1154}	&	7.6 	&	42.7 	&&	\textbf{1154}	&	97.8 	&	42.7 	&&	\textbf{1154}	&	97.8 	&	1	&	42.7 	&&	\textbf{1154}	&	97.8 	&	1	&	42.7 	\\
 0525010-10	&&		1342 		&	3600.0 	&	635 	&	52.7 	&&	\textbf{1086}	&	11.2 	&	41.5 	&&	\textbf{1086}	&	84.9 	&	41.5 	&&	\textbf{1086}	&	84.9 	&	1	&	41.5 	&&	\textbf{1086}	&	84.9 	&	1	&	41.5 	\\
 0525010-11	&&		1750 		&	3600.0 	&	644 	&	63.2 	&&		1240 		&	42.7 	&	48.1 	&&	\textbf{1233}	&	347.3 	&	47.8 	&&	\textbf{1233}	&	347.3 	&	1	&	47.8 	&&	\textbf{1233}	&	347.3 	&	1	&	47.8 	\\
 0525010-12	&&		1046 		&	2894.1 	&	573 	&	45.2 	&&	\textbf{947}	&	7.3 	&	39.5 	&&	\textbf{947}	&	79.7 	&	39.5 	&&	\textbf{947}	&	79.7 	&	1	&	39.5 	&&	\textbf{947}	&	79.7 	&	1	&	39.5 	\\
 0525010-13	&&		1854 		&	3600.0 	&	504 	&	72.8 	&&	\textbf{1004}	&	9.2 	&	49.8 	&&	\textbf{1004}	&	110.9 	&	49.8 	&&	\textbf{1004}	&	110.9 	&	1	&	49.8 	&&	\textbf{1004}	&	110.9 	&	1	&	49.8 	\\
 0525010-14	&&		1658 		&	3600.0 	&	535 	&	67.7 	&&	\textbf{1021}	&	7.9 	&	47.6 	&&	\textbf{1021}	&	83.8 	&	47.6 	&&	\textbf{1021}	&	83.8 	&	1	&	47.6 	&&	\textbf{1021}	&	83.8 	&	1	&	47.6 	\\
 0525010-15	&&		2845 		&	1111.0 	&	531 	&	79.6 	&&		1162 		&	19.3 	&	50.0 	&&	\textbf{1135}	&	671.2 	&	53.2 	&&	\textbf{1135}	&	671.2 	&	1	&	48.8 	&&	\textbf{1135}	&	671.2 	&	1	&	48.8 	\\
 0525010-16	&&		1653 		&	1389.0 	&	581 	&	64.9 	&&		1072 		&	50.8 	&	45.8 	&&		1072 		&	481.9 	&	45.8 	&&	\textbf{1061}	&	983.6 	&	2	&	45.2 	&&	\textbf{1061}	&	1003.1 	&	2	&	45.2 	\\
 0525010-17	&&		1397 		&	3285.8 	&	573 	&	59.0 	&&		1103 		&	17.1 	&	48.1 	&&	\textbf{1099}	&	155.0 	&	47.9 	&&	\textbf{1099}	&	155.0 	&	1	&	47.9 	&&	\textbf{1099}	&	155.0 	&	1	&	47.9 	\\
 0525010-18	&&		2622 		&	942.6 	&	648 	&	71.5 	&&		1496 		&	82.3 	&	50.0 	&&	\textbf{1476}	&	1028.9 	&	56.1 	&&	\textbf{1476}	&	1028.9 	&	1	&	49.3 	&&	\textbf{1476}	&	1028.9 	&	1	&	49.3 	\\
 0525010-19	&&		2819 		&	3600.0 	&	311 	&	85.3 	&&	\textbf{828}	&	4.3 	&	50.0 	&&	\textbf{828}	&	33.6 	&	62.4 	&&	\textbf{828}	&	33.6 	&	1	&	50.0 	&&	\textbf{828}	&	33.6 	&	1	&	50.0 	\\
 0525010-20	&&		1355 		&	3600.0 	&	448 	&	66.3 	&&		911 		&	15.7 	&	49.9 	&&		921 		&	84.2 	&	51.4 	&&	\textbf{904}	&	311.0 	&	3	&	49.6 	&&	\textbf{904}	&	302.8 	&	3	&	49.6 	\\
 0525010-21	&&		1407 		&	3600.0 	&	487 	&	64.5 	&&		999 		&	79.1 	&	49.9 	&&	\textbf{968}	&	315.2 	&	49.7 	&&	\textbf{968}	&	315.2 	&	1	&	48.3 	&&	\textbf{968}	&	315.2 	&	1	&	48.3 	\\
 0525010-22	&&		579 		&	3455.3 	&	376 	&	35.1 	&&	\textbf{543}	&	1.2 	&	30.8 	&&	\textbf{543}	&	10.5 	&	30.8 	&&	\textbf{543}	&	10.5 	&	1	&	30.8 	&&	\textbf{543}	&	10.5 	&	1	&	30.8 	\\
 0525010-23	&&		1169 		&	2469.9 	&	511 	&	56.3 	&&	\textbf{919}	&	36.6 	&	44.4 	&&	\textbf{919}	&	226.3 	&	44.4 	&&	\textbf{919}	&	226.3 	&	1	&	44.4 	&&	\textbf{919}	&	226.3 	&	1	&	44.4 	\\
 0525010-24	&&		908 		&	3169.6 	&	345 	&	61.1 	&&	\textbf{706}	&	18.5 	&	50.0 	&&		708 		&	41.6 	&	51.3 	&&	\textbf{706}	&	208.8 	&	3	&	50.0 	&&	\textbf{706}	&	197.2 	&	3	&	50.0 	\\
 0525010-25	&&		1123 		&	5206.2 	&	387 	&	65.5 	&&	\textbf{749}	&	13.3 	&	48.3 	&&	\textbf{749}	&	27.4 	&	48.3 	&&	\textbf{749}	&	27.4 	&	1	&	48.3 	&&	\textbf{749}	&	27.4 	&	1	&	48.3 	\\
 0525010-26	&&		959 		&	2131.1 	&	437 	&	54.4 	&&	\textbf{712}	&	3.3 	&	38.6 	&&	\textbf{712}	&	14.2 	&	38.6 	&&	\textbf{712}	&	14.2 	&	1	&	38.6 	&&	\textbf{712}	&	14.2 	&	1	&	38.6 	\\
 0525010-27	&&		667 		&	3167.1 	&	242 	&	60.0 	&&		533 		&	5.2 	&	49.9 	&&	\textbf{532}	&	34.4 	&	54.5 	&&	\textbf{532}	&	34.4 	&	1	&	49.8 	&&	\textbf{532}	&	34.4 	&	1	&	49.8 	\\
 0525010-28	&&		790 		&	3103.4 	&	550 	&	30.4 	&&	\textbf{766}	&	3.4 	&	28.2 	&&		790 		&	25.4 	&	30.4 	&&	\textbf{766}	&	62.7 	&	2	&	28.2 	&&	\textbf{766}	&	58.0 	&	2	&	28.2 	\\
 0525010-29	&&		771 		&	3050.9 	&	442 	&	42.7 	&&		703 		&	7.4 	&	37.1 	&&		685 		&	9.0 	&	35.5 	&&	\textbf{669}	&	24.7 	&	2	&	33.9 	&&	\textbf{669}	&	23.8 	&	2	&	33.9 	\\
 0525010-30	&&	\textbf{639}	&	1752.0 	&	543 	&	15.0 	&&		696 		&	7.5 	&	22.0 	&&	\textbf{639}	&	12.4 	&	15.0 	&&	\textbf{639}	&	12.4 	&	1	&	15.0 	&&	\textbf{639}	&	12.4 	&	1	&	15.0 	\\
\hline																																															
 0525020-01	&&		4758 		&	2158.3 	&	2057 	&	56.8 	&&	\textbf{3404}	&	3.6 	&	39.6 	&&	\textbf{3404}	&	818.9 	&	39.6 	&&	\textbf{3404}	&	818.9 	&	1	&	39.6 	&&	\textbf{3404}	&	818.9 	&	1	&	39.6 	\\
 0525020-02	&&		4977 		&	1013.2 	&	1728 	&	64.9 	&&	\textbf{3492}	&	7.7 	&	50.0 	&&	\textbf{3492}	&	1509.9 	&	50.5 	&&	\textbf{3492}	&	1509.9 	&	1	&	50.0 	&&	\textbf{3492}	&	1509.9 	&	1	&	50.0 	\\
 0525020-03	&&		5207 		&	1686.4 	&	2445 	&	53.0 	&&		4253 		&	17.6 	&	42.5 	&&	\textbf{4205}	&	2243.5 	&	41.9 	&&	\textbf{4205}	&	2243.5 	&	1	&	41.9 	&&	\textbf{4205}	&	2243.5 	&	1	&	41.9 	\\
 0525020-04	&&		6016 		&	2642.2 	&	3034 	&	49.6 	&&	\textbf{5025}	&	50.5 	&	39.6 	&&		5043 		&	3600.0 	&	39.8 	&&		5043		&	3600.0 	&	1	&	39.8 	&&		5043		&	3600.0 	&	1	&	39.8 	\\
 0525020-05	&&		6697 		&	468.7 	&	1720 	&	74.3 	&&		3317 		&	7.0 	&	48.1 	&&		3317 		&	709.1 	&	48.1 	&&	\textbf{3302}	&	1258.0 	&	2	&	47.9 	&&	\textbf{3302}	&	1181.1 	&	2	&	47.9 	\\
 0525020-06	&&		5263 		&	2611.0 	&	2316 	&	56.0 	&&		4020 		&	83.7 	&	42.4 	&&	\textbf{3962}	&	3600.0 	&	41.5 	&&	\textbf{3962}	&	3600.0 	&	1	&	41.5 	&&	\textbf{3962}	&	3600.0 	&	1	&	41.5 	\\
 0525020-07	&&		4810 		&	842.4 	&	1605 	&	65.5 	&&		3318 		&	12.0 	&	50.0 	&&	\textbf{3289}	&	1888.1 	&	51.2 	&&	\textbf{3289}	&	1888.1 	&	1	&	49.6 	&&	\textbf{3289}	&	1888.1 	&	1	&	49.6 	\\
 0525020-08	&&		4773 		&	2593.0 	&	2237 	&	53.1 	&&		3837 		&	59.4 	&	41.7 	&&	\textbf{3813}	&	3600.0 	&	41.3 	&&	\textbf{3813}	&	3600.0 	&	1	&	41.3 	&&	\textbf{3813}	&	3600.0 	&	1	&	41.3 	\\
 0525020-09	&&		4079 		&	3600.0 	&	1808 	&	55.7 	&&		3324 		&	6.7 	&	45.6 	&&	\textbf{3288}	&	637.7 	&	45.0 	&&	\textbf{3288}	&	637.7 	&	1	&	45.0 	&&	\textbf{3288}	&	637.7 	&	1	&	45.0 	\\
 0525020-10	&&		3805 		&	3600.0 	&	1770 	&	53.5 	&&	\textbf{3104}	&	9.1 	&	43.0 	&&	\textbf{3104}	&	1457.5 	&	43.0 	&&	\textbf{3104}	&	1457.5 	&	1	&	43.0 	&&	\textbf{3104}	&	1457.5 	&	1	&	43.0 	\\
 0525020-11	&&		7673 		&	1684.9 	&	2155 	&	71.3 	&&	\textbf{4408}	&	20.3 	&	50.0 	&&		4467 		&	3600.0 	&	51.8 	&&		4467		&	3600.0 	&	1	&	50.7 	&&		4467		&	3600.0 	&	1	&	50.7 	\\
 0525020-12	&&		4635 		&	387.8 	&	2045 	&	55.9 	&&		3477 		&	14.5 	&	41.2 	&&	\textbf{3427}	&	2635.2 	&	40.3 	&&	\textbf{3427}	&	2635.2 	&	1	&	40.3 	&&	\textbf{3427}	&	2635.2 	&	1	&	40.3 	\\
 0525020-13	&&		4583 		&	3600.0 	&	1937 	&	57.7 	&&	\textbf{3323}	&	5.6 	&	41.7 	&&		3346 		&	3600.0 	&	42.1 	&&		3346		&	3600.0 	&	1	&	42.1 	&&		3346		&	3600.0 	&	1	&	42.1 	\\
 0525020-14	&&		4671 		&	3379.2 	&	2406 	&	48.5 	&&		4134 		&	16.1 	&	41.8 	&&	\textbf{4093}	&	3600.0 	&	41.2 	&&	\textbf{4093}	&	3600.0 	&	1	&	41.2 	&&	\textbf{4093}	&	3600.0 	&	1	&	41.2 	\\
 0525020-15	&&		5615 		&	3600.0 	&	2386 	&	57.5 	&&		4390 		&	42.0 	&	45.6 	&&	\textbf{4365}	&	3600.0 	&	45.3 	&&	\textbf{4365}	&	3600.0 	&	1	&	45.3 	&&	\textbf{4365}	&	3600.0 	&	1	&	45.3 	\\
 0525020-16	&&		4905 		&	3310.5 	&	2220 	&	54.7 	&&		3816 		&	7.5 	&	41.8 	&&	\textbf{3774}	&	3600.0 	&	41.2 	&&	\textbf{3774}	&	3600.0 	&	1	&	41.2 	&&	\textbf{3774}	&	3600.0 	&	1	&	41.2 	\\
 0525020-17	&&		6666 		&	615.2 	&	1651 	&	75.2 	&&	\textbf{3161}	&	16.0 	&	47.8 	&&	\textbf{3161}	&	778.0 	&	47.8 	&&	\textbf{3161}	&	778.0 	&	1	&	47.8 	&&	\textbf{3161}	&	778.0 	&	1	&	47.8 	\\
 0525020-18	&&		5722 		&	3600.0 	&	2398 	&	58.1 	&&		4061 		&	18.5 	&	41.0 	&&	\textbf{4034}	&	3600.0 	&	40.6 	&&	\textbf{4034}	&	3600.0 	&	1	&	40.6 	&&	\textbf{4034}	&	3600.0 	&	1	&	40.6 	\\
 0525020-19	&&		7501 		&	449.4 	&	1770 	&	76.4 	&&		3371 		&	4.2 	&	47.5 	&&	\textbf{3362}	&	2077.0 	&	47.4 	&&	\textbf{3362}	&	2077.0 	&	1	&	47.4 	&&	\textbf{3362}	&	2077.0 	&	1	&	47.4 	\\
 0525020-20	&&		6733 		&	817.7 	&	1661 	&	75.3 	&&	\textbf{3315}	&	2.1 	&	49.9 	&&	\textbf{3315}	&	2196.7 	&	49.9 	&&	\textbf{3315}	&	2196.7 	&	1	&	49.9 	&&	\textbf{3315}	&	2196.7 	&	1	&	49.9 	\\
 0525020-21	&&		4950 		&	2804.3 	&	2088 	&	57.8 	&&	\textbf{3516}	&	21.1 	&	40.6 	&&		3543 		&	3600.0 	&	41.1 	&&		3543		&	3600.0 	&	1	&	41.1 	&&		3543		&	3600.0 	&	1	&	41.1 	\\
 0525020-22	&&		3559 		&	2303.4 	&	1366 	&	61.6 	&&		2435 		&	13.2 	&	43.9 	&&	\textbf{2411}	&	179.0 	&	43.3 	&&	\textbf{2411}	&	179.0 	&	1	&	43.3 	&&	\textbf{2411}	&	179.0 	&	1	&	43.3 	\\
 0525020-23	&&		3593 		&	2388.0 	&	1708 	&	52.5 	&&		2786 		&	7.3 	&	38.7 	&&	\textbf{2776}	&	1204.1 	&	38.5 	&&	\textbf{2776}	&	1204.1 	&	1	&	38.5 	&&	\textbf{2776}	&	1204.1 	&	1	&	38.5 	\\
 0525020-24	&&		3556 		&	3390.4 	&	1420 	&	60.1 	&&	\textbf{2405}	&	7.5 	&	41.0 	&&	\textbf{2405}	&	1607.5 	&	41.0 	&&	\textbf{2405}	&	1607.5 	&	1	&	41.0 	&&	\textbf{2405}	&	1607.5 	&	1	&	41.0 	\\
 0525020-25	&&		4129 		&	3001.2 	&	1795 	&	56.5 	&&	\textbf{3023}	&	17.2 	&	40.6 	&&	\textbf{3023}	&	3600.0 	&	40.6 	&&	\textbf{3023}	&	3600.0 	&	1	&	40.6 	&&	\textbf{3023}	&	3600.0 	&	1	&	40.6 	\\
 0525020-26	&&		4075 		&	3046.0 	&	1578 	&	61.3 	&&		2785 		&	50.4 	&	43.3 	&&	\textbf{2772}	&	3600.0 	&	43.1 	&&	\textbf{2772}	&	3600.0 	&	1	&	43.1 	&&	\textbf{2772}	&	3600.0 	&	1	&	43.1 	\\
 0525020-27	&&		3005 		&	3057.1 	&	1328 	&	55.8 	&&		2254 		&	2.6 	&	41.1 	&&		2252 		&	338.5 	&	41.0 	&&	\textbf{2247}	&	1622.3 	&	5	&	40.9 	&&	\textbf{2247}	&	1589.9 	&	5	&	40.9 	\\
 0525020-28	&&		2999 		&	3541.9 	&	1594 	&	46.8 	&&		2550 		&	11.3 	&	37.5 	&&	\textbf{2514}	&	2314.7 	&	36.6 	&&	\textbf{2514}	&	2314.7 	&	1	&	36.6 	&&	\textbf{2514}	&	2314.7 	&	1	&	36.6 	\\
 0525020-29	&&		2959 		&	1209.4 	&	1230 	&	58.4 	&&	\textbf{2190}	&	1.0 	&	43.8 	&&	\textbf{2190}	&	35.3 	&	43.8 	&&	\textbf{2190}	&	35.3 	&	1	&	43.8 	&&	\textbf{2190}	&	35.3 	&	1	&	43.8 	\\
 0525020-30	&&		3547 		&	3981.8 	&	1292 	&	63.6 	&&	\textbf{2207}	&	3.4 	&	41.5 	&&	\textbf{2207}	&	96.7 	&	41.5 	&&	\textbf{2207}	&	96.7 	&	1	&	41.5 	&&	\textbf{2207}	&	96.7 	&	1	&	41.5 	\\
\hline																																															
 0525030-01	&&		9578 		&	2959.9 	&	4616 	&	51.8 	&&		7806 		&	39.4 	&	40.9 	&&	\textbf{7793}	&	3600.0 	&	40.8 	&&	\textbf{7793}	&	3600.0 	&	1	&	40.8 	&&	\textbf{7793}	&	3600.0 	&	1	&	40.8 	\\
 0525030-02	&&		9933 		&	3469.1 	&	4083 	&	58.9 	&&	\textbf{7011}	&	3.0 	&	41.8 	&&	\textbf{7011}	&	2047.9 	&	41.8 	&&	\textbf{7011}	&	2047.9 	&	1	&	41.8 	&&	\textbf{7011}	&	2047.9 	&	1	&	41.8 	\\
 0525030-03	&&		10528 		&	3003.9 	&	4956 	&	52.9 	&&		8202 		&	19.0 	&	39.6 	&&	\textbf{8176}	&	3600.0 	&	39.4 	&&	\textbf{8176}	&	3600.0 	&	1	&	39.4 	&&	\textbf{8176}	&	3600.0 	&	1	&	39.4 	\\
 0525030-04	&&		8941 		&	2415.6 	&	4486 	&	49.8 	&&	\textbf{7601}	&	13.5 	&	41.0 	&&		7610 		&	3600.0 	&	41.1 	&&		7610		&	3600.0 	&	1	&	41.1 	&&		7610		&	3600.0 	&	1	&	41.1 	\\
 0525030-05	&&		8061 		&	2872.0 	&	4077 	&	49.4 	&&		6678 		&	14.6 	&	38.9 	&&	\textbf{6668}	&	3600.0 	&	38.9 	&&	\textbf{6668}	&	3600.0 	&	1	&	38.9 	&&	\textbf{6668}	&	3600.0 	&	1	&	38.9 	\\
 0525030-06	&&		9247 		&	3874.1 	&	4283 	&	53.7 	&&	\textbf{7055}	&	7.9 	&	39.3 	&&	\textbf{7055}	&	3600.0 	&	39.3 	&&	\textbf{7055}	&	3600.0 	&	1	&	39.3 	&&	\textbf{7055}	&	3600.0 	&	1	&	39.3 	\\
 0525030-07	&&		9285 		&	2391.7 	&	3319 	&	64.3 	&&		6754 		&	16.7 	&	50.9 	&&	\textbf{6706}	&	3179.0 	&	50.5 	&&	\textbf{6706}	&	3179.0 	&	1	&	50.5 	&&	\textbf{6706}	&	3179.0 	&	1	&	50.5 	\\
 0525030-08	&&		10985 		&	2492.1 	&	5174 	&	52.9 	&&	\textbf{8441}	&	6.3 	&	38.7 	&&	\textbf{8441}	&	3600.0 	&	38.7 	&&	\textbf{8441}	&	3600.0 	&	1	&	38.7 	&&	\textbf{8441}	&	3600.0 	&	1	&	38.7 	\\
 0525030-09	&&		8741 		&	847.9 	&	4568 	&	47.7 	&&	\textbf{7172}	&	2.8 	&	36.3 	&&	\textbf{7172}	&	2562.5 	&	36.3 	&&	\textbf{7172}	&	2562.5 	&	1	&	36.3 	&&	\textbf{7172}	&	2562.5 	&	1	&	36.3 	\\
 0525030-10	&&		8110 		&	2533.9 	&	4172 	&	48.6 	&&		6642 		&	11.2 	&	37.2 	&&	\textbf{6641}	&	3600.0 	&	37.2 	&&	\textbf{6641}	&	3600.0 	&	1	&	37.2 	&&	\textbf{6641}	&	3600.0 	&	1	&	37.2 	\\
 0525030-11	&&		12736 		&	2330.6 	&	4754 	&	62.7 	&&	\textbf{8338}	&	46.4 	&	43.0 	&&		8341 		&	3600.0 	&	43.0 	&&		8341		&	3600.0 	&	1	&	43.0 	&&		8341		&	3600.0 	&	1	&	43.0 	\\
 0525030-12	&&		10717 		&	3391.2 	&	4760 	&	55.6 	&&		7947 		&	6.5 	&	40.1 	&&	\textbf{7885}	&	3600.0 	&	39.6 	&&	\textbf{7885}	&	3600.0 	&	1	&	39.6 	&&	\textbf{7885}	&	3600.0 	&	1	&	39.6 	\\
 0525030-13	&&		8479 		&	4933.2 	&	3926 	&	53.7 	&&		6888 		&	32.7 	&	43.0 	&&	\textbf{6820}	&	3600.0 	&	42.4 	&&	\textbf{6820}	&	3600.0 	&	1	&	42.4 	&&	\textbf{6820}	&	3600.0 	&	1	&	42.4 	\\
 0525030-14	&&		10423 		&	3844.6 	&	4590 	&	56.0 	&&		7616 		&	33.9 	&	39.7 	&&	\textbf{7544}	&	3600.0 	&	39.2 	&&	\textbf{7544}	&	3600.0 	&	1	&	39.2 	&&	\textbf{7544}	&	3600.0 	&	1	&	39.2 	\\
 0525030-15	&&		18185 		&	1740.9 	&	4542 	&	75.0 	&&	\textbf{9067}	&	24.7 	&	49.9 	&&		9093 		&	3600.0 	&	50.0 	&&		9093		&	3600.0 	&	1	&	50.0 	&&		9093		&	3600.0 	&	1	&	50.0 	\\
 0525030-16	&&		12987 		&	838.5 	&	4120 	&	68.3 	&&		7148 		&	12.5 	&	42.4 	&&	\textbf{7114}	&	3600.0 	&	42.1 	&&	\textbf{7114}	&	3600.0 	&	1	&	42.1 	&&	\textbf{7114}	&	3600.0 	&	1	&	42.1 	\\
 0525030-17	&&		9429 		&	2795.1 	&	4584 	&	51.4 	&&	\textbf{7471}	&	7.7 	&	38.6 	&&	\textbf{7471}	&	3600.0 	&	38.6 	&&	\textbf{7471}	&	3600.0 	&	1	&	38.6 	&&	\textbf{7471}	&	3600.0 	&	1	&	38.6 	\\
 0525030-18	&&		9985 		&	2914.9 	&	5068 	&	49.2 	&&	\textbf{8644}	&	28.7 	&	41.4 	&&	\textbf{8644}	&	3600.0 	&	41.4 	&&	\textbf{8644}	&	3600.0 	&	1	&	41.4 	&&	\textbf{8644}	&	3600.0 	&	1	&	41.4 	\\
 0525030-19	&&		10166 		&	2852.4 	&	5293 	&	47.9 	&&		8891 		&	6.6 	&	40.5 	&&	\textbf{8851}	&	3600.0 	&	40.2 	&&	\textbf{8851}	&	3600.0 	&	1	&	40.2 	&&	\textbf{8851}	&	3600.0 	&	1	&	40.2 	\\
 0525030-20	&&		11712 		&	3187.5 	&	5262 	&	55.1 	&&	\textbf{8774}	&	79.3 	&	40.0 	&&		8798 		&	3600.0 	&	40.2 	&&		8798		&	3600.0 	&	1	&	40.2 	&&		8798		&	3600.0 	&	1	&	40.2 	\\
 0525030-21	&&		6545 		&	2734.0 	&	3239 	&	50.5 	&&		5418 		&	8.8 	&	40.2 	&&	\textbf{5369}	&	679.3 	&	39.7 	&&	\textbf{5369}	&	679.3 	&	1	&	39.7 	&&	\textbf{5369}	&	679.3 	&	1	&	39.7 	\\
 0525030-22	&&		6471 		&	2002.1 	&	3179 	&	50.9 	&&	\textbf{5204}	&	4.5 	&	38.9 	&&	\textbf{5204}	&	3600.0 	&	38.9 	&&	\textbf{5204}	&	3600.0 	&	1	&	38.9 	&&	\textbf{5204}	&	3600.0 	&	1	&	38.9 	\\
 0525030-23	&&		8702 		&	2665.0 	&	3833 	&	56.0 	&&		6359 		&	22.7 	&	39.7 	&&	\textbf{6312}	&	3600.0 	&	39.3 	&&	\textbf{6312}	&	3600.0 	&	1	&	39.3 	&&	\textbf{6312}	&	3600.0 	&	1	&	39.3 	\\
 0525030-24	&&		6274 		&	3412.4 	&	3330 	&	46.9 	&&	\textbf{5246}	&	10.9 	&	36.5 	&&	\textbf{5246}	&	3600.0 	&	36.5 	&&	\textbf{5246}	&	3600.0 	&	1	&	36.5 	&&	\textbf{5246}	&	3600.0 	&	1	&	36.5 	\\
 0525030-25	&&		8802 		&	1918.8 	&	3815 	&	56.7 	&&	\textbf{6070}	&	15.8 	&	37.1 	&&		6080 		&	3600.0 	&	37.3 	&&		6080		&	3600.0 	&	1	&	37.3 	&&		6080		&	3600.0 	&	1	&	37.3 	\\
 0525030-26	&&		6501 		&	1458.4 	&	3618 	&	44.3 	&&		5728 		&	12.2 	&	36.8 	&&	\textbf{5726}	&	3600.0 	&	36.8 	&&	\textbf{5726}	&	3600.0 	&	1	&	36.8 	&&	\textbf{5726}	&	3600.0 	&	1	&	36.8 	\\
 0525030-27	&&		6429 		&	1461.1 	&	2993 	&	53.4 	&&		4813 		&	15.9 	&	37.8 	&&	\textbf{4776}	&	3600.0 	&	37.3 	&&	\textbf{4776}	&	3600.0 	&	1	&	37.3 	&&	\textbf{4776}	&	3600.0 	&	1	&	37.3 	\\
 0525030-28	&&		5973 		&	2471.4 	&	3322 	&	44.4 	&&	\textbf{5130}	&	12.6 	&	35.2 	&&		5134 		&	3600.0 	&	35.3 	&&		5134		&	3600.0 	&	1	&	35.3 	&&		5134		&	3600.0 	&	1	&	35.3 	\\
 0525030-29	&&		6226 		&	1665.4 	&	3359 	&	46.0 	&&	\textbf{5273}	&	8.7 	&	36.3 	&&	\textbf{5273}	&	2141.8 	&	36.3 	&&	\textbf{5273}	&	2141.8 	&	1	&	36.3 	&&	\textbf{5273}	&	2141.8 	&	1	&	36.3 	\\
 0525030-30	&&		6611 		&	1034.3 	&	3658 	&	44.7 	&&	\textbf{5747}	&	4.4 	&	36.3 	&&		5768 		&	3600.0 	&	36.6 	&&		5768		&	3600.0 	&	1	&	36.6 	&&		5768		&	3600.0 	&	1	&	36.6 	\\
\bhline{1pt}
\end{tabular}
\end{table}

\section{Detailed results for the Min--Max Regret Knapsack Problem}\label{Asec:KP}
Tables~\ref{tbl:kp-1}--\ref{tbl:kp-9} show
the results of the DS algorithm (``DS'') and iDS algorithms using Hamming-distance constraints (``\mbox{iDS-H}'') and best-scenario constraints (``\mbox{iDS-B}'') for the \mbox{MMR-SCP} for each instance type.
The \mbox{MMR-KP} instances are denoted by ``\mbox{$v$-$ww$-$xx$-$yy$-$zz$},'' where $\textrm{type}=f$, $n=ww$, $\bar{R}/1000=xx$, $100\gamma=yy$, and $100\delta=zz$.
Best known lower-bound values (``LB'') and solution values (``UB'') are the best results as obtained from three heuristic algorithms and three exact algorithms by \citet{FIMY15}.
Concerning the best solution obtained by DS and iDS for each instance,
the table shows its objective value (``obj''), required CPU time in seconds (``time''), and optimality gap as a percentage (``\%gap''),
that is, the gap between the solution value and the best known lower-bound value (``LB'').
For the best solution obtained by iDS, the table shows iteration index (``iter'') other than ``obj,'' ``time'' and ``gap.''
Values in ``obj'' columns marked by ``$\downarrow$'' (or ``$\uparrow$'') indicate instances
whose objective function obtained by the proposed algorithm were better (or worse) than the best known solution values in ``UB'' columns.
Bold values in ``obj'' columns indicate better objective values obtained within the time limit (3600 seconds) between \mbox{iDS-H} and \mbox{iDS-B}.
Bold values in ``iter'' columns signify that fewer iterations were needed when \mbox{iDS-H} and \mbox{iDS-B} obtained solutions with the same objective value in ``obj'' columns.

\begin{table}[htbp]
\centering \fontsize{9pt}{11.2pt}\selectfont
\setlength{\tabcolsep}{3.9pt}
\caption{MMR-KP results for type-1 instances}
\label{tbl:kp-1}

\end{table}

\section{Detailed results for the Min--Max Regret Set Covering Problem}\label{Asec:SCP}
Tables~\ref{tbl:scp-b}--\ref{tbl:scp-k} show
the results of the DS algorithm (``DS'') and iDS algorithms using Hamming-distance constraints (``\mbox{iDS-H}'') and best-scenario constraints (``\mbox{iDS-B}'') for \mbox{MMR-SCP} for each instance type.
An \mbox{MMR-SCP} instance denoted by ``\mbox{B$xyyzz$}'' indicates a Type-B instance whose corresponding SCP instance is the $yy$th instance in family $x$ from the \mbox{OR-Library} with $zz=100\delta$,
while ``\mbox{M$xyy$-$z$}'' (or ``\mbox{K$xyy$-$z$}'') stands for the $z$th Type-M (or Type-K) instance whose corresponding SCP instance is the $yy$th instance in family $x$.
The best known lower-bound value (``LB'') and solution value (``UB'') are the results obtained from three heuristic algorithms and three exact algorithms by \citet{PA13}.
The notations ``obj,'' ``time,'' ``ite,'' ``gap,'' ``LB,'' and ``$\downarrow$,''
as well as the bold values in columns ``obj'' and columns ``ite,'' are the same as in Tables~\ref{tbl:kp-1}--\ref{tbl:kp-9} for the \mbox{MMR-KP}.

\begin{table}[htbp]
\centering \fontsize{7.5pt}{8.2pt}\selectfont
\setlength{\tabcolsep}{6.1pt}
\caption{MMR-SCP results for type-B instances}
\label{tbl:scp-b}
\begin{tabular}{ll*{16}{r}}
\hline
                             && \multicolumn{2}{c}{Best Known}                  && \multicolumn{3}{c}{DS}                  && \multicolumn{4}{c}{iDS-H}                                                                  && \multicolumn{4}{c}{iDS-B}                                                                      \\ \cline{3-4} \cline{6-8} \cline{10-13} \cline{15-18} 
\multicolumn{1}{c}{instance} && \multicolumn{1}{c}{LB} & \multicolumn{1}{c}{UB} &&
\multicolumn{1}{c}{obj} & \multicolumn{1}{c}{time} & \multicolumn{1}{c}{\%gap} &&
\multicolumn{1}{c}{obj} & \multicolumn{1}{c}{time} & \multicolumn{1}{c}{iter} & \multicolumn{1}{c}{\%gap} &&
\multicolumn{1}{c}{obj} & \multicolumn{1}{c}{time} & \multicolumn{1}{c}{iter} & \multicolumn{1}{c}{\%gap} \\ \hline
 B40110	&&	21	&	21	&&	21	{\,\,\,\,}	&	0.493 	&	0.0 	&&		21		{\,\,\,\,}	&	0.05 	&		1		&	0.0 	&&		21		{\,\,\,\,}	&	0.05 	&		1		&	0.0 	\\
 B40130	&&	67	&	67	&&	67	{\,\,\,\,}	&	0.516 	&	0.0 	&&		67		{\,\,\,\,}	&	0.26 	&		1		&	0.0 	&&		67		{\,\,\,\,}	&	0.26 	&		1		&	0.0 	\\
 B40150	&&	124	&	124	&&	126	\,$\uparrow$	&	0.124 	&	1.6 	&&		124		{\,\,\,\,}	&	3.78 	&		8		&	0.0 	&&		124		{\,\,\,\,}	&	1.60 	&	\textbf{4}	&	0.0 	\\
 B40210	&&	23	&	23	&&	23	{\,\,\,\,}	&	0.273 	&	0.0 	&&		23		{\,\,\,\,}	&	0.12 	&		1		&	0.0 	&&		23		{\,\,\,\,}	&	0.12 	&		1		&	0.0 	\\
 B40230	&&	93	&	93	&&	94	\,$\uparrow$	&	0.422 	&	1.1 	&&		93		{\,\,\,\,}	&	3.88 	&		9		&	0.0 	&&		93		{\,\,\,\,}	&	2.58 	&	\textbf{5}	&	0.0 	\\
 B40250	&&	163	&	163	&&	164	\,$\uparrow$	&	0.085 	&	0.6 	&&		163		{\,\,\,\,}	&	1.71 	&		3		&	0.0 	&&		163		{\,\,\,\,}	&	2.11 	&		3		&	0.0 	\\
 B40310	&&	12	&	12	&&	12	{\,\,\,\,}	&	0.268 	&	0.0 	&&		12		{\,\,\,\,}	&	0.03 	&		1		&	0.0 	&&		12		{\,\,\,\,}	&	0.03 	&		1		&	0.0 	\\
 B40330	&&	86	&	86	&&	86	{\,\,\,\,}	&	0.538 	&	0.0 	&&		86		{\,\,\,\,}	&	0.31 	&		1		&	0.0 	&&		86		{\,\,\,\,}	&	0.31 	&		1		&	0.0 	\\
 B40350	&&	190	&	190	&&	191	\,$\uparrow$	&	0.212 	&	0.5 	&&		190		{\,\,\,\,}	&	5.22 	&		7		&	0.0 	&&		190		{\,\,\,\,}	&	5.69 	&		7		&	0.0 	\\
 B40410	&&	20	&	20	&&	20	{\,\,\,\,}	&	0.456 	&	0.0 	&&		20		{\,\,\,\,}	&	0.07 	&		1		&	0.0 	&&		20		{\,\,\,\,}	&	0.07 	&		1		&	0.0 	\\
 B40430	&&	85	&	85	&&	85	{\,\,\,\,}	&	0.557 	&	0.0 	&&		85		{\,\,\,\,}	&	0.55 	&		1		&	0.0 	&&		85		{\,\,\,\,}	&	0.55 	&		1		&	0.0 	\\
 B40450	&&	160	&	160	&&	160	{\,\,\,\,}	&	0.334 	&	0.0 	&&		160		{\,\,\,\,}	&	0.94 	&		1		&	0.0 	&&		160		{\,\,\,\,}	&	0.94 	&		1		&	0.0 	\\
 B40510	&&	13	&	13	&&	13	{\,\,\,\,}	&	0.349 	&	0.0 	&&		13		{\,\,\,\,}	&	0.04 	&		1		&	0.0 	&&		13		{\,\,\,\,}	&	0.04 	&		1		&	0.0 	\\
 B40530	&&	73	&	73	&&	74	\,$\uparrow$	&	0.868 	&	1.4 	&&		73		{\,\,\,\,}	&	5.66 	&		15		&	0.0 	&&		73		{\,\,\,\,}	&	2.65 	&	\textbf{7}	&	0.0 	\\
 B40550	&&	165	&	165	&&	165	{\,\,\,\,}	&	0.110 	&	0.0 	&&		165		{\,\,\,\,}	&	0.56 	&		1		&	0.0 	&&		165		{\,\,\,\,}	&	0.56 	&		1		&	0.0 	\\
 B40610	&&	16	&	16	&&	17	\,$\uparrow$	&	0.444 	&	5.9 	&&		16		{\,\,\,\,}	&	0.27 	&		3		&	0.0 	&&		16		{\,\,\,\,}	&	0.36 	&		3		&	0.0 	\\
 B40630	&&	84	&	84	&&	85	\,$\uparrow$	&	0.615 	&	1.2 	&&		84		{\,\,\,\,}	&	3.32 	&		8		&	0.0 	&&		84		{\,\,\,\,}	&	3.97 	&	\textbf{7}	&	0.0 	\\
 B40650	&&	175	&	175	&&	181	\,$\uparrow$	&	0.045 	&	3.3 	&&		175		{\,\,\,\,}	&	199.30 	&		165		&	0.0 	&&		175		{\,\,\,\,}	&	113.01 	&	\textbf{73}	&	0.0 	\\
 B40710	&&	17	&	17	&&	17	{\,\,\,\,}	&	0.181 	&	0.0 	&&		17		{\,\,\,\,}	&	0.09 	&		1		&	0.0 	&&		17		{\,\,\,\,}	&	0.09 	&		1		&	0.0 	\\
 B40730	&&	59	&	59	&&	60	\,$\uparrow$	&	0.519 	&	1.7 	&&		59		{\,\,\,\,}	&	0.82 	&		3		&	0.0 	&&		59		{\,\,\,\,}	&	0.59 	&	\textbf{2}	&	0.0 	\\
 B40750	&&	104	&	104	&&	106	\,$\uparrow$	&	0.415 	&	1.9 	&&		104		{\,\,\,\,}	&	1.13 	&		5		&	0.0 	&&		104		{\,\,\,\,}	&	0.40 	&	\textbf{2}	&	0.0 	\\
 B40810	&&	24	&	24	&&	24	{\,\,\,\,}	&	0.835 	&	0.0 	&&		24		{\,\,\,\,}	&	0.21 	&		1		&	0.0 	&&		24		{\,\,\,\,}	&	0.21 	&		1		&	0.0 	\\
 B40830	&&	54	&	54	&&	56	\,$\uparrow$	&	0.864 	&	3.6 	&&		54		{\,\,\,\,}	&	0.97 	&		4		&	0.0 	&&		54		{\,\,\,\,}	&	1.03 	&	\textbf{3}	&	0.0 	\\
 B40850	&&	155	&	155	&&	155	{\,\,\,\,}	&	0.493 	&	0.0 	&&		155		{\,\,\,\,}	&	0.79 	&		1		&	0.0 	&&		155		{\,\,\,\,}	&	0.79 	&		1		&	0.0 	\\
 B40910	&&	33	&	33	&&	33	{\,\,\,\,}	&	0.546 	&	0.0 	&&		33		{\,\,\,\,}	&	0.36 	&		1		&	0.0 	&&		33		{\,\,\,\,}	&	0.36 	&		1		&	0.0 	\\
 B40930	&&	99	&	99	&&	100	\,$\uparrow$	&	0.894 	&	1.0 	&&		99		{\,\,\,\,}	&	12.93 	&		16		&	0.0 	&&		99		{\,\,\,\,}	&	2.09 	&	\textbf{4}	&	0.0 	\\
 B40950	&&	263	&	263	&&	263	{\,\,\,\,}	&	0.081 	&	0.0 	&&		263		{\,\,\,\,}	&	1.14 	&		1		&	0.0 	&&		263		{\,\,\,\,}	&	1.14 	&		1		&	0.0 	\\
 B41010	&&	17	&	17	&&	17	{\,\,\,\,}	&	0.568 	&	0.0 	&&		17		{\,\,\,\,}	&	0.03 	&		1		&	0.0 	&&		17		{\,\,\,\,}	&	0.03 	&		1		&	0.0 	\\
 B41030	&&	63	&	63	&&	63	{\,\,\,\,}	&	0.592 	&	0.0 	&&		63		{\,\,\,\,}	&	0.21 	&		1		&	0.0 	&&		63		{\,\,\,\,}	&	0.21 	&		1		&	0.0 	\\
 B41050	&&	141	&	141	&&	143	\,$\uparrow$	&	0.157 	&	1.4 	&&		141		{\,\,\,\,}	&	2.01 	&		8		&	0.0 	&&		141		{\,\,\,\,}	&	2.09 	&		8		&	0.0 	\\
 B50110	&&	24	&	24	&&	24	{\,\,\,\,}	&	0.262 	&	0.0 	&&		24		{\,\,\,\,}	&	0.55 	&		1		&	0.0 	&&		24		{\,\,\,\,}	&	0.55 	&		1		&	0.0 	\\
 B50130	&&	53	&	53	&&	54	\,$\uparrow$	&	0.567 	&	1.9 	&&		53		{\,\,\,\,}	&	16.80 	&		20		&	0.0 	&&		53		{\,\,\,\,}	&	17.42 	&	\textbf{16}	&	0.0 	\\
 B50150	&&	93	&	93	&&	95	\,$\uparrow$	&	0.496 	&	2.1 	&&		93		{\,\,\,\,}	&	4.77 	&		5		&	0.0 	&&		93		{\,\,\,\,}	&	4.66 	&		5		&	0.0 	\\
 B50210	&&	19	&	19	&&	19	{\,\,\,\,}	&	0.902 	&	0.0 	&&		19		{\,\,\,\,}	&	0.48 	&		1		&	0.0 	&&		19		{\,\,\,\,}	&	0.48 	&		1		&	0.0 	\\
 B50230	&&	69	&	69	&&	72	\,$\uparrow$	&	1.219 	&	4.2 	&&		69		{\,\,\,\,}	&	2.24 	&		2		&	0.0 	&&		69		{\,\,\,\,}	&	2.33 	&		2		&	0.0 	\\
 B50250	&&	121	&	121	&&	122	\,$\uparrow$	&	0.12 	&	0.8 	&&		121		{\,\,\,\,}	&	2.54 	&		2		&	0.0 	&&		121		{\,\,\,\,}	&	2.48 	&		2		&	0.0 	\\
 B50310	&&	12	&	12	&&	13	\,$\uparrow$	&	0.38 	&	7.7 	&&		12		{\,\,\,\,}	&	0.75 	&		7		&	0.0 	&&		12		{\,\,\,\,}	&	0.41 	&	\textbf{3}	&	0.0 	\\
 B50330	&&	24	&	24	&&	24	{\,\,\,\,}	&	0.42 	&	0.0 	&&		24		{\,\,\,\,}	&	0.10 	&		1		&	0.0 	&&		24		{\,\,\,\,}	&	0.10 	&		1		&	0.0 	\\
 B50350	&&	66	&	66	&&	67	\,$\uparrow$	&	0.07 	&	1.5 	&&		66		{\,\,\,\,}	&	1.43 	&		5		&	0.0 	&&		66		{\,\,\,\,}	&	0.54 	&	\textbf{2}	&	0.0 	\\
 B50410	&&	18	&	18	&&	19	\,$\uparrow$	&	0.67 	&	5.3 	&&		18		{\,\,\,\,}	&	1.12 	&		4		&	0.0 	&&		18		{\,\,\,\,}	&	1.47 	&		4		&	0.0 	\\
 B50430	&&	50	&	50	&&	51	\,$\uparrow$	&	0.10 	&	2.0 	&&		50		{\,\,\,\,}	&	1.49 	&		3		&	0.0 	&&		50		{\,\,\,\,}	&	1.23 	&	\textbf{2}	&	0.0 	\\
 B50450	&&	84	&	84	&&	86	\,$\uparrow$	&	0.17 	&	2.3 	&&		84		{\,\,\,\,}	&	1.15 	&		2		&	0.0 	&&		84		{\,\,\,\,}	&	1.21 	&		2		&	0.0 	\\
 B50510	&&	10	&	10	&&	11	\,$\uparrow$	&	0.57 	&	9.1 	&&		10		{\,\,\,\,}	&	0.25 	&		2		&	0.0 	&&		10		{\,\,\,\,}	&	0.29 	&		2		&	0.0 	\\
 B50530	&&	34	&	34	&&	37	\,$\uparrow$	&	0.48 	&	8.1 	&&		34		{\,\,\,\,}	&	1.22 	&		4		&	0.0 	&&		34		{\,\,\,\,}	&	1.51 	&		4		&	0.0 	\\
 B50550	&&	57	&	57	&&	57	{\,\,\,\,}	&	0.86 	&	0.0 	&&		57		{\,\,\,\,}	&	0.42 	&		1		&	0.0 	&&		57		{\,\,\,\,}	&	0.42 	&		1		&	0.0 	\\
 B50610	&&	13	&	13	&&	13	{\,\,\,\,}	&	0.11 	&	0.0 	&&		13		{\,\,\,\,}	&	0.09 	&		1		&	0.0 	&&		13		{\,\,\,\,}	&	0.09 	&		1		&	0.0 	\\
 B50630	&&	29	&	29	&&	29	{\,\,\,\,}	&	0.33 	&	0.0 	&&		29		{\,\,\,\,}	&	0.27 	&		1		&	0.0 	&&		29		{\,\,\,\,}	&	0.27 	&		1		&	0.0 	\\
 B50650	&&	51	&	51	&&	52	\,$\uparrow$	&	0.44 	&	1.9 	&&		51		{\,\,\,\,}	&	8.77 	&		17		&	0.0 	&&		51		{\,\,\,\,}	&	2.08 	&	\textbf{4}	&	0.0 	\\
 B50710	&&	17	&	17	&&	17	{\,\,\,\,}	&	0.16 	&	0.0 	&&		17		{\,\,\,\,}	&	0.21 	&		1		&	0.0 	&&		17		{\,\,\,\,}	&	0.21 	&		1		&	0.0 	\\
 B50730	&&	54	&	54	&&	54	{\,\,\,\,}	&	0.19 	&	0.0 	&&		54		{\,\,\,\,}	&	0.46 	&		1		&	0.0 	&&		54		{\,\,\,\,}	&	0.46 	&		1		&	0.0 	\\
 B50750	&&	88	&	88	&&	88	{\,\,\,\,}	&	1.04 	&	0.0 	&&		88		{\,\,\,\,}	&	0.56 	&		1		&	0.0 	&&		88		{\,\,\,\,}	&	0.56 	&		1		&	0.0 	\\
 B50810	&&	24	&	24	&&	25	\,$\uparrow$	&	0.32 	&	4.0 	&&		24		{\,\,\,\,}	&	2.24 	&		5		&	0.0 	&&		24		{\,\,\,\,}	&	1.13 	&	\textbf{3}	&	0.0 	\\
 B50830	&&	57	&	57	&&	57	{\,\,\,\,}	&	0.47 	&	0.0 	&&		57		{\,\,\,\,}	&	0.35 	&		1		&	0.0 	&&		57		{\,\,\,\,}	&	0.35 	&		1		&	0.0 	\\
 B50850	&&	105	&	105	&&	105	{\,\,\,\,}	&	0.31 	&	0.0 	&&		105		{\,\,\,\,}	&	0.87 	&		1		&	0.0 	&&		105		{\,\,\,\,}	&	0.87 	&		1		&	0.0 	\\
 B50910	&&	21	&	21	&&	21	{\,\,\,\,}	&	0.29 	&	0.0 	&&		21		{\,\,\,\,}	&	0.11 	&		1		&	0.0 	&&		21		{\,\,\,\,}	&	0.11 	&		1		&	0.0 	\\
 B50930	&&	56	&	56	&&	57	\,$\uparrow$	&	0.53 	&	1.8 	&&		56		{\,\,\,\,}	&	1.38 	&		3		&	0.0 	&&		56		{\,\,\,\,}	&	1.66 	&		3		&	0.0 	\\
 B50950	&&	91	&	91	&&	91	{\,\,\,\,}	&	0.19 	&	0.0 	&&		91		{\,\,\,\,}	&	0.62 	&		1		&	0.0 	&&		91		{\,\,\,\,}	&	0.62 	&		1		&	0.0 	\\
 B51010	&&	10	&	10	&&	10	{\,\,\,\,}	&	0.13 	&	0.0 	&&		10		{\,\,\,\,}	&	0.05 	&		1		&	0.0 	&&		10		{\,\,\,\,}	&	0.05 	&		1		&	0.0 	\\
 B51030	&&	37	&	37	&&	37	{\,\,\,\,}	&	0.49 	&	0.0 	&&		37		{\,\,\,\,}	&	0.18 	&		1		&	0.0 	&&		37		{\,\,\,\,}	&	0.18 	&		1		&	0.0 	\\
 B51050	&&	76	&	76	&&	76	{\,\,\,\,}	&	0.25 	&	0.0 	&&		76		{\,\,\,\,}	&	0.52 	&		1		&	0.0 	&&		76		{\,\,\,\,}	&	0.52 	&		1		&	0.0 	\\
 B60110	&&	17	&	17	&&	17	{\,\,\,\,}	&	0.55 	&	0.0 	&&		17		{\,\,\,\,}	&	0.42 	&		1		&	0.0 	&&		17		{\,\,\,\,}	&	0.42 	&		1		&	0.0 	\\
 B60130	&&	38	&	38	&&	39	\,$\uparrow$	&	0.82 	&	2.6 	&&		38		{\,\,\,\,}	&	71.73 	&		80		&	0.0 	&&		38		{\,\,\,\,}	&	17.99 	&	\textbf{18}	&	0.0 	\\
 B60150	&&	58	&	58	&&	58	{\,\,\,\,}	&	0.90 	&	0.0 	&&		58		{\,\,\,\,}	&	0.86 	&		1		&	0.0 	&&		58		{\,\,\,\,}	&	0.86 	&		1		&	0.0 	\\
 B60210	&&	15	&	15	&&	15	{\,\,\,\,}	&	0.74 	&	0.0 	&&		15		{\,\,\,\,}	&	0.49 	&		1		&	0.0 	&&		15		{\,\,\,\,}	&	0.49 	&		1		&	0.0 	\\
 B60230	&&	40	&	40	&&	41	\,$\uparrow$	&	1.01 	&	2.4 	&&		40		{\,\,\,\,}	&	1.69 	&		3		&	0.0 	&&		40		{\,\,\,\,}	&	1.42 	&	\textbf{2}	&	0.0 	\\
 B60250	&&	56	&	56	&&	59	\,$\uparrow$	&	0.89 	&	5.1 	&&		56		{\,\,\,\,}	&	1.85 	&		2		&	0.0 	&&		56		{\,\,\,\,}	&	1.80 	&		2		&	0.0 	\\
 B60310	&&	6	&	6	&&	6	{\,\,\,\,}	&	0.29 	&	0.0 	&&		6		{\,\,\,\,}	&	0.08 	&		1		&	0.0 	&&		6		{\,\,\,\,}	&	0.08 	&		1		&	0.0 	\\
 B60330	&&	33	&	33	&&	33	{\,\,\,\,}	&	0.42 	&	0.0 	&&		33		{\,\,\,\,}	&	0.57 	&		1		&	0.0 	&&		33		{\,\,\,\,}	&	0.57 	&		1		&	0.0 	\\
 B60350	&&	54	&	54	&&	54	{\,\,\,\,}	&	0.68 	&	0.0 	&&		54		{\,\,\,\,}	&	0.59 	&		1		&	0.0 	&&		54		{\,\,\,\,}	&	0.59 	&		1		&	0.0 	\\
 B60410	&&	13	&	13	&&	14	\,$\uparrow$	&	0.18 	&	7.1 	&&		13		{\,\,\,\,}	&	0.57 	&		3		&	0.0 	&&		13		{\,\,\,\,}	&	0.49 	&	\textbf{2}	&	0.0 	\\
 B60430	&&	31	&	31	&&	32	\,$\uparrow$	&	0.70 	&	3.1 	&&		31		{\,\,\,\,}	&	3.99 	&		12		&	0.0 	&&		31		{\,\,\,\,}	&	1.66 	&	\textbf{4}	&	0.0 	\\
 B60450	&&	46	&	46	&&	47	\,$\uparrow$	&	0.62 	&	2.1 	&&		46		{\,\,\,\,}	&	1.73 	&		3		&	0.0 	&&		46		{\,\,\,\,}	&	1.47 	&		3		&	0.0 	\\
 B60510	&&	13	&	13	&&	13	{\,\,\,\,}	&	0.59 	&	0.0 	&&		13		{\,\,\,\,}	&	0.50 	&		1		&	0.0 	&&		13		{\,\,\,\,}	&	0.50 	&		1		&	0.0 	\\
 B60530	&&	43	&	43	&&	43	{\,\,\,\,}	&	0.54 	&	0.0 	&&		43		{\,\,\,\,}	&	0.90 	&		1		&	0.0 	&&		43		{\,\,\,\,}	&	0.90 	&		1		&	0.0 	\\
 B60550	&&	71	&	71	&&	71	{\,\,\,\,}	&	0.35 	&	0.0 	&&		71		{\,\,\,\,}	&	1.22 	&		1		&	0.0 	&&		71		{\,\,\,\,}	&	1.22 	&		1		&	0.0 	\\
\hline
\end{tabular}
\end{table}

\begin{table}[htbp]
\centering \fontsize{7.5pt}{8.2pt}\selectfont
\setlength{\tabcolsep}{6.1pt}
\caption{MMR-SCP results for type-M instances}
\label{tbl:scp-m}
\begin{tabular}{ll*{16}{r}}
\hline
                             && \multicolumn{2}{c}{Best Known}                  && \multicolumn{3}{c}{DS}                  && \multicolumn{4}{c}{iDS-H}                                                                  && \multicolumn{4}{c}{iDS-B}                                                                      \\ \cline{3-4} \cline{6-8} \cline{10-13} \cline{15-18} 
\multicolumn{1}{c}{instance} && \multicolumn{1}{c}{LB} & \multicolumn{1}{c}{UB} &&
\multicolumn{1}{c}{obj} & \multicolumn{1}{c}{time} & \multicolumn{1}{c}{\%gap} &&
\multicolumn{1}{c}{obj} & \multicolumn{1}{c}{time} & \multicolumn{1}{c}{iter} & \multicolumn{1}{c}{\%gap} &&
\multicolumn{1}{c}{obj} & \multicolumn{1}{c}{time} & \multicolumn{1}{c}{iter} & \multicolumn{1}{c}{\%gap} \\ \hline
 M401-1	&&	3160	&	3160	&&	3160	{\,\,\,\,}	&	0.68 	&	0.0 	&&		3160		{\,\,\,\,}	&	0.68 	&		1		&	0.0 	&&		3160		{\,\,\,\,}	&	0.68 	&		1		&	0.0 	\\
 M401-2	&&	3495	&	3495	&&	3495	{\,\,\,\,}	&	0.32 	&	0.0 	&&		3495		{\,\,\,\,}	&	0.32 	&		1		&	0.0 	&&		3495		{\,\,\,\,}	&	0.32 	&		1		&	0.0 	\\
 M401-3	&&	3382	&	3382	&&	3382	{\,\,\,\,}	&	0.93 	&	0.0 	&&		3382		{\,\,\,\,}	&	0.93 	&		1		&	0.0 	&&		3382		{\,\,\,\,}	&	0.93 	&		1		&	0.0 	\\
 M402-1	&&	3610	&	3610	&&	3610	{\,\,\,\,}	&	1.00 	&	0.0 	&&		3610		{\,\,\,\,}	&	1.00 	&		1		&	0.0 	&&		3610		{\,\,\,\,}	&	1.00 	&		1		&	0.0 	\\
 M402-2	&&	2986	&	2986	&&	2986	{\,\,\,\,}	&	0.36 	&	0.0 	&&		2986		{\,\,\,\,}	&	0.36 	&		1		&	0.0 	&&		2986		{\,\,\,\,}	&	0.36 	&		1		&	0.0 	\\
 M402-3	&&	4294	&	4294	&&	4294	{\,\,\,\,}	&	1.12 	&	0.0 	&&		4294		{\,\,\,\,}	&	1.12 	&		1		&	0.0 	&&		4294		{\,\,\,\,}	&	1.12 	&		1		&	0.0 	\\
 M403-1	&&	3233	&	3233	&&	3233	{\,\,\,\,}	&	0.24 	&	0.0 	&&		3233		{\,\,\,\,}	&	0.24 	&		1		&	0.0 	&&		3233		{\,\,\,\,}	&	0.24 	&		1		&	0.0 	\\
 M403-2	&&	4205	&	4205	&&	4205	{\,\,\,\,}	&	0.60 	&	0.0 	&&		4205		{\,\,\,\,}	&	0.60 	&		1		&	0.0 	&&		4205		{\,\,\,\,}	&	0.60 	&		1		&	0.0 	\\
 M403-3	&&	3589	&	3589	&&	3594	\,$\uparrow$	&	0.79 	&	0.1 	&&		3589		{\,\,\,\,}	&	9.20 	&		11		&	0.0 	&&		3589		{\,\,\,\,}	&	1.94 	&	\textbf{2}	&	0.0 	\\
 M404-1	&&	3628	&	3628	&&	3628	{\,\,\,\,}	&	0.28 	&	0.0 	&&		3628		{\,\,\,\,}	&	0.28 	&		1		&	0.0 	&&		3628		{\,\,\,\,}	&	0.28 	&		1		&	0.0 	\\
 M404-2	&&	3954	&	3954	&&	3954	{\,\,\,\,}	&	0.80 	&	0.0 	&&		3954		{\,\,\,\,}	&	0.80 	&		1		&	0.0 	&&		3954		{\,\,\,\,}	&	0.80 	&		1		&	0.0 	\\
 M404-3	&&	2957	&	2957	&&	2957	{\,\,\,\,}	&	0.19 	&	0.0 	&&		2957		{\,\,\,\,}	&	0.19 	&		1		&	0.0 	&&		2957		{\,\,\,\,}	&	0.19 	&		1		&	0.0 	\\
 M405-1	&&	3546	&	3546	&&	3546	{\,\,\,\,}	&	0.70 	&	0.0 	&&		3546		{\,\,\,\,}	&	0.70 	&		1		&	0.0 	&&		3546		{\,\,\,\,}	&	0.70 	&		1		&	0.0 	\\
 M405-2	&&	3589	&	3589	&&	3589	{\,\,\,\,}	&	0.25 	&	0.0 	&&		3589		{\,\,\,\,}	&	0.25 	&		1		&	0.0 	&&		3589		{\,\,\,\,}	&	0.25 	&		1		&	0.0 	\\
 M405-3	&&	3698	&	3698	&&	3698	{\,\,\,\,}	&	0.66 	&	0.0 	&&		3698		{\,\,\,\,}	&	0.66 	&		1		&	0.0 	&&		3698		{\,\,\,\,}	&	0.66 	&		1		&	0.0 	\\
 M406-1	&&	3549	&	3549	&&	3549	{\,\,\,\,}	&	0.98 	&	0.0 	&&		3549		{\,\,\,\,}	&	0.98 	&		1		&	0.0 	&&		3549		{\,\,\,\,}	&	0.98 	&		1		&	0.0 	\\
 M406-2	&&	2975	&	2975	&&	2979	\,$\uparrow$	&	0.18 	&	0.1 	&&		2975		{\,\,\,\,}	&	1.89 	&		9		&	0.0 	&&		2975		{\,\,\,\,}	&	1.23 	&	\textbf{4}	&	0.0 	\\
 M406-3	&&	3199	&	3199	&&	3199	{\,\,\,\,}	&	0.26 	&	0.0 	&&		3199		{\,\,\,\,}	&	0.26 	&		1		&	0.0 	&&		3199		{\,\,\,\,}	&	0.26 	&		1		&	0.0 	\\
 M407-1	&&	3115	&	3115	&&	3115	{\,\,\,\,}	&	0.61 	&	0.0 	&&		3115		{\,\,\,\,}	&	0.61 	&		1		&	0.0 	&&		3115		{\,\,\,\,}	&	0.61 	&		1		&	0.0 	\\
 M407-2	&&	4674	&	4674	&&	4680	\,$\uparrow$	&	0.98 	&	0.1 	&&		4674		{\,\,\,\,}	&	20.78 	&		18		&	0.0 	&&		4674		{\,\,\,\,}	&	19.19 	&	\textbf{12}	&	0.0 	\\
 M407-3	&&	4136	&	4136	&&	4136	{\,\,\,\,}	&	0.41 	&	0.0 	&&		4136		{\,\,\,\,}	&	0.41 	&		1		&	0.0 	&&		4136		{\,\,\,\,}	&	0.41 	&		1		&	0.0 	\\
 M408-1	&&	3802	&	3802	&&	3802	{\,\,\,\,}	&	0.63 	&	0.0 	&&		3802		{\,\,\,\,}	&	0.63 	&		1		&	0.0 	&&		3802		{\,\,\,\,}	&	0.63 	&		1		&	0.0 	\\
 M408-2	&&	3302	&	3302	&&	3310	\,$\uparrow$	&	0.36 	&	0.2 	&&		3302		{\,\,\,\,}	&	0.61 	&		2		&	0.0 	&&		3302		{\,\,\,\,}	&	0.89 	&		2		&	0.0 	\\
 M408-3	&&	3147	&	3147	&&	3147	{\,\,\,\,}	&	0.21 	&	0.0 	&&		3147		{\,\,\,\,}	&	0.21 	&		1		&	0.0 	&&		3147		{\,\,\,\,}	&	0.21 	&		1		&	0.0 	\\
 M409-1	&&	3602	&	3602	&&	3602	{\,\,\,\,}	&	0.21 	&	0.0 	&&		3602		{\,\,\,\,}	&	0.21 	&		1		&	0.0 	&&		3602		{\,\,\,\,}	&	0.21 	&		1		&	0.0 	\\
 M409-2	&&	4222	&	4222	&&	4222	{\,\,\,\,}	&	0.58 	&	0.0 	&&		4222		{\,\,\,\,}	&	0.58 	&		1		&	0.0 	&&		4222		{\,\,\,\,}	&	0.58 	&		1		&	0.0 	\\
 M409-3	&&	3365	&	3365	&&	3365	{\,\,\,\,}	&	0.25 	&	0.0 	&&		3365		{\,\,\,\,}	&	0.25 	&		1		&	0.0 	&&		3365		{\,\,\,\,}	&	0.25 	&		1		&	0.0 	\\
 M410-1	&&	3000	&	3000	&&	3000	{\,\,\,\,}	&	0.11 	&	0.0 	&&		3000		{\,\,\,\,}	&	0.11 	&		1		&	0.0 	&&		3000		{\,\,\,\,}	&	0.11 	&		1		&	0.0 	\\
 M410-2	&&	4340	&	4340	&&	4340	{\,\,\,\,}	&	0.71 	&	0.0 	&&		4340		{\,\,\,\,}	&	0.71 	&		1		&	0.0 	&&		4340		{\,\,\,\,}	&	0.71 	&		1		&	0.0 	\\
 M410-3	&&	3235	&	3235	&&	3235	{\,\,\,\,}	&	0.10 	&	0.0 	&&		3235		{\,\,\,\,}	&	0.10 	&		1		&	0.0 	&&		3235		{\,\,\,\,}	&	0.10 	&		1		&	0.0 	\\
 M501-1	&&	1864	&	1864	&&	1864	{\,\,\,\,}	&	0.28 	&	0.0 	&&		1864		{\,\,\,\,}	&	0.28 	&		1		&	0.0 	&&		1864		{\,\,\,\,}	&	0.28 	&		1		&	0.0 	\\
 M501-2	&&	1683	&	1683	&&	1683	{\,\,\,\,}	&	0.09 	&	0.0 	&&		1683		{\,\,\,\,}	&	0.09 	&		1		&	0.0 	&&		1683		{\,\,\,\,}	&	0.09 	&		1		&	0.0 	\\
 M501-3	&&	1708	&	1708	&&	1708	{\,\,\,\,}	&	0.53 	&	0.0 	&&		1708		{\,\,\,\,}	&	0.53 	&		1		&	0.0 	&&		1708		{\,\,\,\,}	&	0.53 	&		1		&	0.0 	\\
 M502-1	&&	1972	&	1972	&&	1972	{\,\,\,\,}	&	0.88 	&	0.0 	&&		1972		{\,\,\,\,}	&	0.88 	&		1		&	0.0 	&&		1972		{\,\,\,\,}	&	0.88 	&		1		&	0.0 	\\
 M502-2	&&	1805	&	1805	&&	1805	{\,\,\,\,}	&	0.07 	&	0.0 	&&		1805		{\,\,\,\,}	&	0.07 	&		1		&	0.0 	&&		1805		{\,\,\,\,}	&	0.07 	&		1		&	0.0 	\\
 M502-3	&&	1930	&	1930	&&	1930	{\,\,\,\,}	&	0.12 	&	0.0 	&&		1930		{\,\,\,\,}	&	0.12 	&		1		&	0.0 	&&		1930		{\,\,\,\,}	&	0.12 	&		1		&	0.0 	\\
 M503-1	&&	1889	&	1889	&&	1889	{\,\,\,\,}	&	0.38 	&	0.0 	&&		1889		{\,\,\,\,}	&	0.38 	&		1		&	0.0 	&&		1889		{\,\,\,\,}	&	0.38 	&		1		&	0.0 	\\
 M503-2	&&	2220	&	2220	&&	2220	{\,\,\,\,}	&	0.42 	&	0.0 	&&		2220		{\,\,\,\,}	&	0.42 	&		1		&	0.0 	&&		2220		{\,\,\,\,}	&	0.42 	&		1		&	0.0 	\\
 M503-3	&&	1571	&	1571	&&	1571	{\,\,\,\,}	&	0.07 	&	0.0 	&&		1571		{\,\,\,\,}	&	0.07 	&		1		&	0.0 	&&		1571		{\,\,\,\,}	&	0.07 	&		1		&	0.0 	\\
 M504-1	&&	2179	&	2179	&&	2184	\,$\uparrow$	&	0.67 	&	0.2 	&&		2179		{\,\,\,\,}	&	3.32 	&		5		&	0.0 	&&		2179		{\,\,\,\,}	&	2.16 	&	\textbf{3}	&	0.0 	\\
 M504-2	&&	1902	&	1902	&&	1902	{\,\,\,\,}	&	0.10 	&	0.0 	&&		1902		{\,\,\,\,}	&	0.10 	&		1		&	0.0 	&&		1902		{\,\,\,\,}	&	0.10 	&		1		&	0.0 	\\
 M504-3	&&	1870	&	1870	&&	1870	{\,\,\,\,}	&	0.17 	&	0.0 	&&		1870		{\,\,\,\,}	&	0.17 	&		1		&	0.0 	&&		1870		{\,\,\,\,}	&	0.17 	&		1		&	0.0 	\\
 M505-1	&&	1998	&	1998	&&	1998	{\,\,\,\,}	&	0.57 	&	0.0 	&&		1998		{\,\,\,\,}	&	0.57 	&		1		&	0.0 	&&		1998		{\,\,\,\,}	&	0.57 	&		1		&	0.0 	\\
 M505-2	&&	1781	&	1781	&&	1781	{\,\,\,\,}	&	0.48 	&	0.0 	&&		1781		{\,\,\,\,}	&	0.48 	&		1		&	0.0 	&&		1781		{\,\,\,\,}	&	0.48 	&		1		&	0.0 	\\
 M505-3	&&	1869	&	1869	&&	1869	{\,\,\,\,}	&	0.86 	&	0.0 	&&		1869		{\,\,\,\,}	&	0.86 	&		1		&	0.0 	&&		1869		{\,\,\,\,}	&	0.86 	&		1		&	0.0 	\\
 M506-1	&&	1803	&	1803	&&	1803	{\,\,\,\,}	&	0.11 	&	0.0 	&&		1803		{\,\,\,\,}	&	0.11 	&		1		&	0.0 	&&		1803		{\,\,\,\,}	&	0.11 	&		1		&	0.0 	\\
 M506-2	&&	1943	&	1943	&&	1943	{\,\,\,\,}	&	0.33 	&	0.0 	&&		1943		{\,\,\,\,}	&	0.33 	&		1		&	0.0 	&&		1943		{\,\,\,\,}	&	0.33 	&		1		&	0.0 	\\
 M506-3	&&	2001	&	2001	&&	2001	{\,\,\,\,}	&	0.44 	&	0.0 	&&		2001		{\,\,\,\,}	&	0.44 	&		1		&	0.0 	&&		2001		{\,\,\,\,}	&	0.44 	&		1		&	0.0 	\\
 M507-1	&&	2075	&	2075	&&	2075	{\,\,\,\,}	&	0.16 	&	0.0 	&&		2075		{\,\,\,\,}	&	0.16 	&		1		&	0.0 	&&		2075		{\,\,\,\,}	&	0.16 	&		1		&	0.0 	\\
 M507-2	&&	1878	&	1878	&&	1878	{\,\,\,\,}	&	0.19 	&	0.0 	&&		1878		{\,\,\,\,}	&	0.19 	&		1		&	0.0 	&&		1878		{\,\,\,\,}	&	0.19 	&		1		&	0.0 	\\
 M507-3	&&	1791	&	1791	&&	1791	{\,\,\,\,}	&	1.04 	&	0.0 	&&		1791		{\,\,\,\,}	&	1.04 	&		1		&	0.0 	&&		1791		{\,\,\,\,}	&	1.04 	&		1		&	0.0 	\\
 M508-1	&&	1656	&	1656	&&	1656	{\,\,\,\,}	&	0.32 	&	0.0 	&&		1656		{\,\,\,\,}	&	0.32 	&		1		&	0.0 	&&		1656		{\,\,\,\,}	&	0.32 	&		1		&	0.0 	\\
 M508-2	&&	1570	&	1570	&&	1570	{\,\,\,\,}	&	0.47 	&	0.0 	&&		1570		{\,\,\,\,}	&	0.47 	&		1		&	0.0 	&&		1570		{\,\,\,\,}	&	0.47 	&		1		&	0.0 	\\
 M508-3	&&	1765	&	1765	&&	1765	{\,\,\,\,}	&	0.31 	&	0.0 	&&		1765		{\,\,\,\,}	&	0.31 	&		1		&	0.0 	&&		1765		{\,\,\,\,}	&	0.31 	&		1		&	0.0 	\\
 M509-1	&&	1710	&	1710	&&	1710	{\,\,\,\,}	&	0.29 	&	0.0 	&&		1710		{\,\,\,\,}	&	0.29 	&		1		&	0.0 	&&		1710		{\,\,\,\,}	&	0.29 	&		1		&	0.0 	\\
 M509-2	&&	1769	&	1769	&&	1769	{\,\,\,\,}	&	0.53 	&	0.0 	&&		1769		{\,\,\,\,}	&	0.53 	&		1		&	0.0 	&&		1769		{\,\,\,\,}	&	0.53 	&		1		&	0.0 	\\
 M509-3	&&	2003	&	2003	&&	2003	{\,\,\,\,}	&	0.19 	&	0.0 	&&		2003		{\,\,\,\,}	&	0.19 	&		1		&	0.0 	&&		2003		{\,\,\,\,}	&	0.19 	&		1		&	0.0 	\\
 M510-1	&&	2217	&	2217	&&	2217	{\,\,\,\,}	&	0.13 	&	0.0 	&&		2217		{\,\,\,\,}	&	0.13 	&		1		&	0.0 	&&		2217		{\,\,\,\,}	&	0.13 	&		1		&	0.0 	\\
 M510-2	&&	1937	&	1937	&&	1937	{\,\,\,\,}	&	0.49 	&	0.0 	&&		1937		{\,\,\,\,}	&	0.49 	&		1		&	0.0 	&&		1937		{\,\,\,\,}	&	0.49 	&		1		&	0.0 	\\
 M510-3	&&	1838	&	1838	&&	1838	{\,\,\,\,}	&	0.25 	&	0.0 	&&		1838		{\,\,\,\,}	&	0.25 	&		1		&	0.0 	&&		1838		{\,\,\,\,}	&	0.25 	&		1		&	0.0 	\\
 M601-1	&&	800	&	800	&&	800	{\,\,\,\,}	&	0.55 	&	0.0 	&&		800		{\,\,\,\,}	&	0.55 	&		1		&	0.0 	&&		800		{\,\,\,\,}	&	0.55 	&		1		&	0.0 	\\
 M601-2	&&	1340	&	1340	&&	1340	{\,\,\,\,}	&	0.82 	&	0.0 	&&		1340		{\,\,\,\,}	&	0.82 	&		1		&	0.0 	&&		1340		{\,\,\,\,}	&	0.82 	&		1		&	0.0 	\\
 M601-3	&&	1091	&	1091	&&	1096	\,$\uparrow$	&	0.90 	&	0.5 	&&		1091		{\,\,\,\,}	&	2.73 	&		3		&	0.0 	&&		1091		{\,\,\,\,}	&	3.18 	&		3		&	0.0 	\\
 M602-1	&&	1112	&	1112	&&	1114	\,$\uparrow$	&	0.74 	&	0.2 	&&		1112		{\,\,\,\,}	&	3.55 	&		4		&	0.0 	&&		1112		{\,\,\,\,}	&	2.55 	&	\textbf{3}	&	0.0 	\\
 M602-2	&&	1052	&	1052	&&	1052	{\,\,\,\,}	&	1.01 	&	0.0 	&&		1052		{\,\,\,\,}	&	1.01 	&		1		&	0.0 	&&		1052		{\,\,\,\,}	&	1.01 	&		1		&	0.0 	\\
 M602-3	&&	1264	&	1264	&&	1268	\,$\uparrow$	&	0.89 	&	0.3 	&&		1264		{\,\,\,\,}	&	4.46 	&		5		&	0.0 	&&		1264		{\,\,\,\,}	&	2.65 	&	\textbf{3}	&	0.0 	\\
 M603-1	&&	683	&	683	&&	683	{\,\,\,\,}	&	0.29 	&	0.0 	&&		683		{\,\,\,\,}	&	0.29 	&		1		&	0.0 	&&		683		{\,\,\,\,}	&	0.29 	&		1		&	0.0 	\\
 M603-2	&&	966	&	966	&&	966	{\,\,\,\,}	&	0.42 	&	0.0 	&&		966		{\,\,\,\,}	&	0.42 	&		1		&	0.0 	&&		966		{\,\,\,\,}	&	0.42 	&		1		&	0.0 	\\
 M603-3	&&	1200	&	1200	&&	1200	{\,\,\,\,}	&	0.68 	&	0.0 	&&		1200		{\,\,\,\,}	&	0.68 	&		1		&	0.0 	&&		1200		{\,\,\,\,}	&	0.68 	&		1		&	0.0 	\\
 M604-1	&&	1022	&	1022	&&	1022	{\,\,\,\,}	&	0.18 	&	0.0 	&&		1022		{\,\,\,\,}	&	0.18 	&		1		&	0.0 	&&		1022		{\,\,\,\,}	&	0.18 	&		1		&	0.0 	\\
 M604-2	&&	1055	&	1055	&&	1055	{\,\,\,\,}	&	0.70 	&	0.0 	&&		1055		{\,\,\,\,}	&	0.70 	&		1		&	0.0 	&&		1055		{\,\,\,\,}	&	0.70 	&		1		&	0.0 	\\
 M604-3	&&	971	&	971	&&	971	{\,\,\,\,}	&	0.62 	&	0.0 	&&		971		{\,\,\,\,}	&	0.62 	&		1		&	0.0 	&&		971		{\,\,\,\,}	&	0.62 	&		1		&	0.0 	\\
 M605-1	&&	1083	&	1083	&&	1083	{\,\,\,\,}	&	0.59 	&	0.0 	&&		1083		{\,\,\,\,}	&	0.59 	&		1		&	0.0 	&&		1083		{\,\,\,\,}	&	0.59 	&		1		&	0.0 	\\
 M605-2	&&	1053	&	1053	&&	1053	{\,\,\,\,}	&	0.54 	&	0.0 	&&		1053		{\,\,\,\,}	&	0.54 	&		1		&	0.0 	&&		1053		{\,\,\,\,}	&	0.54 	&		1		&	0.0 	\\
 M605-3	&&	1031	&	1031	&&	1031	{\,\,\,\,}	&	0.35 	&	0.0 	&&		1031		{\,\,\,\,}	&	0.35 	&		1		&	0.0 	&&		1031		{\,\,\,\,}	&	0.35 	&		1		&	0.0 	\\
\hline
\end{tabular}
\end{table}

\begin{table}[htbp]
\centering \fontsize{7.5pt}{8.2pt}\selectfont
\setlength{\tabcolsep}{4.7pt}
\caption{MMR-SCP results for type-K instances}
\label{tbl:scp-k}
\begin{tabular}{ll*{16}{r}}
\hline
                             && \multicolumn{2}{c}{Best Known}                  && \multicolumn{3}{c}{DS}                  && \multicolumn{4}{c}{iDS-H}                                                                  && \multicolumn{4}{c}{iDS-B}                                                                      \\ \cline{3-4} \cline{6-8} \cline{10-13} \cline{15-18} 
\multicolumn{1}{c}{instance} && \multicolumn{1}{c}{LB} & \multicolumn{1}{c}{UB} &&
\multicolumn{1}{c}{obj} & \multicolumn{1}{c}{time} & \multicolumn{1}{c}{\%gap} &&
\multicolumn{1}{c}{obj} & \multicolumn{1}{c}{time} & \multicolumn{1}{c}{iter} & \multicolumn{1}{c}{\%gap} &&
\multicolumn{1}{c}{obj} & \multicolumn{1}{c}{time} & \multicolumn{1}{c}{iter} & \multicolumn{1}{c}{\%gap} \\ \hline
 K401-1	&&	13365	&	14440	&&	14440	{\,\,\,\,}	&	20.403 	&	7.4 	&&		14440		{\,\,\,\,}	&	20.4 	&		1		&	7.4 	&&		14440		{\,\,\,\,}	&	20.4 	&		1		&	7.4 	\\
 K401-2	&&	14026	&	16372	&&	16247	\,$\downarrow$	&	419.292 	&	13.7 	&&		16243		\,$\downarrow$	&	897.8 	&		2		&	13.6 	&&		16243		\,$\downarrow$	&	912.1 	&		2		&	13.6 	\\
 K401-3	&&	11803	&	12974	&&	12974	{\,\,\,\,}	&	45.133 	&	9.0 	&&		12974		{\,\,\,\,}	&	45.2 	&		1		&	9.0 	&&		12974		{\,\,\,\,}	&	45.1 	&		1		&	9.0 	\\
 K402-1	&&	12628	&	14235	&&	14235	{\,\,\,\,}	&	77.207 	&	11.3 	&&		14235		{\,\,\,\,}	&	77.2 	&		1		&	11.3 	&&		14235		{\,\,\,\,}	&	77.2 	&		1		&	11.3 	\\
 K402-2	&&	14325	&	16335	&&	16351	\,$\uparrow$	&	155.660 	&	12.4 	&&		16335		{\,\,\,\,}	&	496.7 	&		3		&	12.3 	&&		16335		{\,\,\,\,}	&	488.5 	&		3		&	12.3 	\\
 K402-3	&&	12445	&	14306	&&	14306	{\,\,\,\,}	&	351.183 	&	13.0 	&&		14306		{\,\,\,\,}	&	351.2 	&		1		&	13.0 	&&		14306		{\,\,\,\,}	&	351.4 	&		1		&	13.0 	\\
 K403-1	&&	14031	&	15139	&&	15139	{\,\,\,\,}	&	28.780 	&	7.3 	&&		15139		{\,\,\,\,}	&	28.8 	&		1		&	7.3 	&&		15139		{\,\,\,\,}	&	28.8 	&		1		&	7.3 	\\
 K403-2	&&	14349	&	16523	&&	16523	{\,\,\,\,}	&	176.895 	&	13.2 	&&		16523		{\,\,\,\,}	&	177.3 	&		1		&	13.2 	&&		16523		{\,\,\,\,}	&	177.6 	&		1		&	13.2 	\\
 K403-3	&&	12436	&	13613	&&	13613	{\,\,\,\,}	&	30.240 	&	8.6 	&&		13613		{\,\,\,\,}	&	30.2 	&		1		&	8.6 	&&		13613		{\,\,\,\,}	&	30.2 	&		1		&	8.6 	\\
 K404-1	&&	13046	&	13472	&&	13472	{\,\,\,\,}	&	9.412 	&	3.2 	&&		13472		{\,\,\,\,}	&	9.4 	&		1		&	3.2 	&&		13472		{\,\,\,\,}	&	9.4 	&		1		&	3.2 	\\
 K404-2	&&	13291	&	15871	&&	15820	\,$\downarrow$	&	580.562 	&	16.0 	&&		15820		\,$\downarrow$	&	580.6 	&		1		&	16.0 	&&	\textbf{15813}	\,$\downarrow$	&	4110.3 	&		7		&	15.9 	\\
 K404-3	&&	13255	&	14898	&&	14954	\,$\uparrow$	&	38.057 	&	11.4 	&&		14881		\,$\downarrow$	&	152.8 	&		4		&	10.9 	&&		14881		\,$\downarrow$	&	146.6 	&		4		&	10.9 	\\
 K405-1	&&	13518	&	16242	&&	16242	{\,\,\,\,}	&	296.749 	&	16.8 	&&		16242		{\,\,\,\,}	&	297.8 	&		1		&	16.8 	&&		16242		{\,\,\,\,}	&	296.7 	&		1		&	16.8 	\\
 K405-2	&&	14204	&	16311	&&	16355	\,$\uparrow$	&	77.639 	&	13.2 	&&		16311		{\,\,\,\,}	&	372.7 	&		4		&	12.9 	&&		16311		{\,\,\,\,}	&	348.7 	&		4		&	12.9 	\\
 K405-3	&&	12933	&	14549	&&	14549	{\,\,\,\,}	&	71.876 	&	11.1 	&&		14539		\,$\downarrow$	&	147.4 	&		2		&	11.0 	&&		14539		\,$\downarrow$	&	167.1 	&		2		&	11.0 	\\
 K406-1	&&	12885	&	14067	&&	14067	{\,\,\,\,}	&	90.892 	&	8.4 	&&		14067		{\,\,\,\,}	&	91.1 	&		1		&	8.4 	&&		14067		{\,\,\,\,}	&	90.9 	&		1		&	8.4 	\\
 K406-2	&&	12522	&	14551	&&	14555	\,$\uparrow$	&	199.734 	&	14.0 	&&		14551		{\,\,\,\,}	&	427.5 	&		2		&	13.9 	&&		14551		{\,\,\,\,}	&	390.6 	&		2		&	13.9 	\\
 K406-3	&&	12312	&	13941	&&	13941	{\,\,\,\,}	&	177.886 	&	11.7 	&&		13941		{\,\,\,\,}	&	178.4 	&		1		&	11.7 	&&		13941		{\,\,\,\,}	&	177.9 	&		1		&	11.7 	\\
 K407-1	&&	13055	&	14928	&&	14908	\,$\downarrow$	&	123.242 	&	12.4 	&&		14908		\,$\downarrow$	&	123.9 	&		1		&	12.4 	&&		14908		\,$\downarrow$	&	123.2 	&		1		&	12.4 	\\
 K407-2	&&	13509	&	15875	&&	15901	\,$\uparrow$	&	380.145 	&	15.0 	&&		15875		{\,\,\,\,}	&	1166.1 	&		3		&	14.9 	&&		15875		{\,\,\,\,}	&	1151.9 	&		3		&	14.9 	\\
 K407-3	&&	13465	&	15262	&&	15268	\,$\uparrow$	&	41.457 	&	11.8 	&&		15262		{\,\,\,\,}	&	246.8 	&		5		&	11.8 	&&		15262		{\,\,\,\,}	&	108.8 	&	\textbf{2}	&	11.8 	\\
 K408-1	&&	12193	&	13752	&&	13654	\,$\downarrow$	&	91.841 	&	10.7 	&&		13654		\,$\downarrow$	&	91.8 	&		1		&	10.7 	&&		13654		\,$\downarrow$	&	92.0 	&		1		&	10.7 	\\
 K408-2	&&	13907	&	15843	&&	15843	{\,\,\,\,}	&	47.826 	&	12.2 	&&		15843		{\,\,\,\,}	&	47.9 	&		1		&	12.2 	&&		15843		{\,\,\,\,}	&	47.8 	&		1		&	12.2 	\\
 K408-3	&&	12430	&	13566	&&	13566	{\,\,\,\,}	&	34.834 	&	8.4 	&&		13566		{\,\,\,\,}	&	34.8 	&		1		&	8.4 	&&		13566		{\,\,\,\,}	&	34.8 	&		1		&	8.4 	\\
 K409-1	&&	13283	&	14872	&&	14872	{\,\,\,\,}	&	160.887 	&	10.7 	&&		14872		{\,\,\,\,}	&	161.2 	&		1		&	10.7 	&&		14872		{\,\,\,\,}	&	160.9 	&		1		&	10.7 	\\
 K409-2	&&	12285	&	14020	&&	13971	\,$\downarrow$	&	117.825 	&	12.1 	&&		13971		\,$\downarrow$	&	117.8 	&		1		&	12.1 	&&		13971		\,$\downarrow$	&	117.8 	&		1		&	12.1 	\\
 K409-3	&&	12949	&	14414	&&	14414	{\,\,\,\,}	&	50.856 	&	10.2 	&&		14414		{\,\,\,\,}	&	50.9 	&		1		&	10.2 	&&		14414		{\,\,\,\,}	&	50.9 	&		1		&	10.2 	\\
 K410-1	&&	13049	&	15367	&&	15287	\,$\downarrow$	&	93.437 	&	14.6 	&&		15254		\,$\downarrow$	&	316.7 	&		3		&	14.5 	&&		15254		\,$\downarrow$	&	291.5 	&		3		&	14.5 	\\
 K410-2	&&	14265	&	15903	&&	15903	{\,\,\,\,}	&	168.399 	&	10.3 	&&		15903		{\,\,\,\,}	&	168.4 	&		1		&	10.3 	&&		15903		{\,\,\,\,}	&	168.7 	&		1		&	10.3 	\\
 K410-3	&&	13983	&	16635	&&	16635	{\,\,\,\,}	&	344.680 	&	15.9 	&&		16635		{\,\,\,\,}	&	346.0 	&		1		&	15.9 	&&		16635		{\,\,\,\,}	&	346.3 	&		1		&	15.9 	\\
 K501-1	&&	10925	&	11743	&&	11743	{\,\,\,\,}	&	36.964 	&	7.0 	&&		11743		{\,\,\,\,}	&	37.0 	&		1		&	7.0 	&&		11743		{\,\,\,\,}	&	37.0 	&		1		&	7.0 	\\
 K501-2	&&	10997	&	11899	&&	11899	{\,\,\,\,}	&	16.887 	&	7.6 	&&		11899		{\,\,\,\,}	&	16.9 	&		1		&	7.6 	&&		11899		{\,\,\,\,}	&	16.9 	&		1		&	7.6 	\\
 K501-3	&&	11328	&	11631	&&	11631	{\,\,\,\,}	&	12.344 	&	2.6 	&&		11631		{\,\,\,\,}	&	12.3 	&		1		&	2.6 	&&		11631		{\,\,\,\,}	&	12.3 	&		1		&	2.6 	\\
 K502-1	&&	9891	&	10144	&&	10152	\,$\uparrow$	&	15.681 	&	2.6 	&&		10144		{\,\,\,\,}	&	67.2 	&		3		&	2.5 	&&		10144		{\,\,\,\,}	&	58.5 	&		3		&	2.5 	\\
 K502-2	&&	11450	&	12422	&&	12422	{\,\,\,\,}	&	125.340 	&	7.8 	&&		12422		{\,\,\,\,}	&	125.3 	&		1		&	7.8 	&&		12422		{\,\,\,\,}	&	125.3 	&		1		&	7.8 	\\
 K502-3	&&	10465	&	11016	&&	11016	{\,\,\,\,}	&	18.901 	&	5.0 	&&		11016		{\,\,\,\,}	&	18.9 	&		1		&	5.0 	&&		11016		{\,\,\,\,}	&	18.9 	&		1		&	5.0 	\\
 K503-1	&&	10265	&	10265	&&	10291	\,$\uparrow$	&	4.596 	&	0.3 	&&		10265		{\,\,\,\,}	&	9.2 	&		2		&	0.0 	&&		10265		{\,\,\,\,}	&	9.5 	&		2		&	0.0 	\\
 K503-2	&&	11305	&	12321	&&	12321	{\,\,\,\,}	&	22.899 	&	8.2 	&&		12321		{\,\,\,\,}	&	22.9 	&		1		&	8.2 	&&		12321		{\,\,\,\,}	&	22.9 	&		1		&	8.2 	\\
 K503-3	&&	10650	&	11957	&&	11919	\,$\downarrow$	&	69.742 	&	10.6 	&&		11912		\,$\downarrow$	&	134.9 	&		2		&	10.6 	&&		11912		\,$\downarrow$	&	128.8 	&		2		&	10.6 	\\
 K504-1	&&	10931	&	11429	&&	11429	{\,\,\,\,}	&	7.445 	&	4.4 	&&		11429		{\,\,\,\,}	&	7.4 	&		1		&	4.4 	&&		11429		{\,\,\,\,}	&	7.4 	&		1		&	4.4 	\\
 K504-2	&&	12234	&	13388	&&	13388	{\,\,\,\,}	&	30.963 	&	8.6 	&&		13388		{\,\,\,\,}	&	31.0 	&		1		&	8.6 	&&		13388		{\,\,\,\,}	&	31.0 	&		1		&	8.6 	\\
 K504-3	&&	11281	&	11943	&&	11943	{\,\,\,\,}	&	22.659 	&	5.5 	&&		11943		{\,\,\,\,}	&	22.7 	&		1		&	5.5 	&&		11943		{\,\,\,\,}	&	22.7 	&		1		&	5.5 	\\
 K505-1	&&	11342	&	12102	&&	12116	\,$\uparrow$	&	10.666 	&	6.4 	&&		12102		{\,\,\,\,}	&	20.6 	&		2		&	6.3 	&&		12102		{\,\,\,\,}	&	21.4 	&		2		&	6.3 	\\
 K505-2	&&	11847	&	13663	&&	13663	{\,\,\,\,}	&	342.522 	&	13.3 	&&		13663		{\,\,\,\,}	&	342.5 	&		1		&	13.3 	&&		13663		{\,\,\,\,}	&	342.7 	&		1		&	13.3 	\\
 K505-3	&&	10815	&	12159	&&	12159	{\,\,\,\,}	&	253.301 	&	11.1 	&&		12159		{\,\,\,\,}	&	253.3 	&		1		&	11.1 	&&		12159		{\,\,\,\,}	&	253.3 	&		1		&	11.1 	\\
 K506-1	&&	10047	&	10232	&&	10232	{\,\,\,\,}	&	10.340 	&	1.8 	&&		10232		{\,\,\,\,}	&	10.3 	&		1		&	1.8 	&&		10232		{\,\,\,\,}	&	10.3 	&		1		&	1.8 	\\
 K506-2	&&	11645	&	12236	&&	12237	\,$\uparrow$	&	31.781 	&	4.8 	&&		12236		{\,\,\,\,}	&	64.9 	&		2		&	4.8 	&&		12236		{\,\,\,\,}	&	63.6 	&		2		&	4.8 	\\
 K506-3	&&	9858	&	10291	&&	10291	{\,\,\,\,}	&	10.506 	&	4.2 	&&		10291		{\,\,\,\,}	&	10.5 	&		1		&	4.2 	&&		10291		{\,\,\,\,}	&	10.5 	&		1		&	4.2 	\\
 K507-1	&&	10605	&	10661	&&	10661	{\,\,\,\,}	&	3.128 	&	0.5 	&&		10661		{\,\,\,\,}	&	3.1 	&		1		&	0.5 	&&		10661		{\,\,\,\,}	&	3.1 	&		1		&	0.5 	\\
 K507-2	&&	11591	&	12200	&&	12200	{\,\,\,\,}	&	27.002 	&	5.0 	&&		12200		{\,\,\,\,}	&	27.0 	&		1		&	5.0 	&&		12200		{\,\,\,\,}	&	27.0 	&		1		&	5.0 	\\
 K507-3	&&	10506	&	11105	&&	11105	{\,\,\,\,}	&	16.122 	&	5.4 	&&		11105		{\,\,\,\,}	&	16.1 	&		1		&	5.4 	&&		11105		{\,\,\,\,}	&	16.1 	&		1		&	5.4 	\\
 K508-1	&&	10641	&	11095	&&	11095	{\,\,\,\,}	&	4.337 	&	4.1 	&&		11095		{\,\,\,\,}	&	4.3 	&		1		&	4.1 	&&		11095		{\,\,\,\,}	&	4.3 	&		1		&	4.1 	\\
 K508-2	&&	11352	&	12557	&&	12527	\,$\downarrow$	&	74.643 	&	9.4 	&&		12514		\,$\downarrow$	&	158.0 	&		2		&	9.3 	&&		12514		\,$\downarrow$	&	138.8 	&		2		&	9.3 	\\
 K508-3	&&	11179	&	11554	&&	11554	{\,\,\,\,}	&	9.094 	&	3.2 	&&		11554		{\,\,\,\,}	&	9.1 	&		1		&	3.2 	&&		11554		{\,\,\,\,}	&	9.1 	&		1		&	3.2 	\\
 K509-1	&&	11362	&	12151	&&	12111	\,$\downarrow$	&	49.222 	&	6.2 	&&		12111		\,$\downarrow$	&	49.2 	&		1		&	6.2 	&&		12111		\,$\downarrow$	&	49.2 	&		1		&	6.2 	\\
 K509-2	&&	12101	&	13236	&&	13246	\,$\uparrow$	&	66.195 	&	8.6 	&&		13236		{\,\,\,\,}	&	197.6 	&		3		&	8.6 	&&		13236		{\,\,\,\,}	&	168.0 	&		3		&	8.6 	\\
 K509-3	&&	11061	&	11862	&&	11871	\,$\uparrow$	&	9.010 	&	6.8 	&&		11862		{\,\,\,\,}	&	36.8 	&		3		&	6.8 	&&		11862		{\,\,\,\,}	&	26.2 	&		3		&	6.8 	\\
 K510-1	&&	10443	&	10969	&&	11006	\,$\uparrow$	&	23.418 	&	5.1 	&&		10969		{\,\,\,\,}	&	256.6 	&		9		&	4.8 	&&		10969		{\,\,\,\,}	&	201.6 	&		9		&	4.8 	\\
 K510-2	&&	11437	&	12298	&&	12300	\,$\uparrow$	&	39.650 	&	7.0 	&&		12298		{\,\,\,\,}	&	128.0 	&		3		&	7.0 	&&		12298		{\,\,\,\,}	&	116.1 	&		3		&	7.0 	\\
 K510-3	&&	11276	&	12253	&&	12253	{\,\,\,\,}	&	41.684 	&	8.0 	&&		12253		{\,\,\,\,}	&	41.7 	&		1		&	8.0 	&&		12253		{\,\,\,\,}	&	41.7 	&		1		&	8.0 	\\
 K601-1	&&	7099	&	7127	&&	7136	\,$\uparrow$	&	110.892 	&	0.5 	&&		7127		{\,\,\,\,}	&	208.5 	&		2		&	0.4 	&&		7127		{\,\,\,\,}	&	199.3 	&		2		&	0.4 	\\
 K601-2	&&	7190	&	7242	&&	7242	{\,\,\,\,}	&	344.573 	&	0.7 	&&		7242		{\,\,\,\,}	&	344.6 	&		1		&	0.7 	&&		7242		{\,\,\,\,}	&	344.6 	&		1		&	0.7 	\\
 K601-3	&&	5834	&	5834	&&	5834	{\,\,\,\,}	&	73.339 	&	0.0 	&&		5834		{\,\,\,\,}	&	73.3 	&		1		&	0.0 	&&		5834		{\,\,\,\,}	&	73.3 	&		1		&	0.0 	\\
 K602-1	&&	6640	&	6640	&&	6640	{\,\,\,\,}	&	77.797 	&	0.0 	&&		6640		{\,\,\,\,}	&	77.8 	&		1		&	0.0 	&&		6640		{\,\,\,\,}	&	77.8 	&		1		&	0.0 	\\
 K602-2	&&	7146	&	7146	&&	7165	\,$\uparrow$	&	41.927 	&	0.3 	&&		7146		{\,\,\,\,}	&	81.3 	&		2		&	0.0 	&&		7146		{\,\,\,\,}	&	74.1 	&		2		&	0.0 	\\
 K602-3	&&	6605	&	6733	&&	6733	{\,\,\,\,}	&	72.275 	&	1.9 	&&		6733		{\,\,\,\,}	&	72.3 	&		1		&	1.9 	&&		6733		{\,\,\,\,}	&	72.3 	&		1		&	1.9 	\\
 K603-1	&&	6986	&	6986	&&	7023	\,$\uparrow$	&	43.893 	&	0.5 	&&		6986		{\,\,\,\,}	&	98.3 	&		2		&	0.0 	&&		6986		{\,\,\,\,}	&	81.9 	&		2		&	0.0 	\\
 K603-2	&&	7404	&	7828	&&	7828	{\,\,\,\,}	&	108.521 	&	5.4 	&&		7828		{\,\,\,\,}	&	108.5 	&		1		&	5.4 	&&		7828		{\,\,\,\,}	&	108.5 	&		1		&	5.4 	\\
 K603-3	&&	6943	&	6943	&&	6943	{\,\,\,\,}	&	64.947 	&	0.0 	&&		6943		{\,\,\,\,}	&	64.9 	&		1		&	0.0 	&&		6943		{\,\,\,\,}	&	64.9 	&		1		&	0.0 	\\
 K604-1	&&	6343	&	6343	&&	6343	{\,\,\,\,}	&	53.900 	&	0.0 	&&		6343		{\,\,\,\,}	&	53.9 	&		1		&	0.0 	&&		6343		{\,\,\,\,}	&	53.9 	&		1		&	0.0 	\\
 K604-2	&&	7271	&	7822	&&	7822	{\,\,\,\,}	&	616.776 	&	7.0 	&&		7822		{\,\,\,\,}	&	616.8 	&		1		&	7.0 	&&		7822		{\,\,\,\,}	&	616.8 	&		1		&	7.0 	\\
 K604-3	&&	5673	&	5673	&&	5673	{\,\,\,\,}	&	4.224 	&	0.0 	&&		5673		{\,\,\,\,}	&	4.2 	&		1		&	0.0 	&&		5673		{\,\,\,\,}	&	4.2 	&		1		&	0.0 	\\
 K605-1	&&	6641	&	6641	&&	6641	{\,\,\,\,}	&	34.390 	&	0.0 	&&		6641		{\,\,\,\,}	&	34.4 	&		1		&	0.0 	&&		6641		{\,\,\,\,}	&	34.4 	&		1		&	0.0 	\\
 K605-2	&&	7589	&	7674	&&	7674	{\,\,\,\,}	&	146.931 	&	1.1 	&&		7674		{\,\,\,\,}	&	146.9 	&		1		&	1.1 	&&		7674		{\,\,\,\,}	&	146.9 	&		1		&	1.1 	\\
 K605-3	&&	7189	&	7462	&&	7462	{\,\,\,\,}	&	216.769 	&	3.7 	&&		7462		{\,\,\,\,}	&	216.8 	&		1		&	3.7 	&&		7462		{\,\,\,\,}	&	216.8 	&		1		&	3.7 	\\
\hline
\end{tabular}
\end{table}

\section{Detailed results for the Min--Max Regret Generalized Assignment Problem}\label{Asec:GAP}
Tables~\ref{tbl:gap-a}--\ref{tbl:gap-e} show
the results of the DS algorithm (``DS'') and iDS algorithms using Hamming-distance constraints (``\mbox{iDS-H}'') and best-scenario constraints (``\mbox{iDS-B}'') for the \mbox{MMR-GAP} for each instance type.
An \mbox{MMR-GAP} instance denoted by ``\mbox{$Txxyyzz$-$i$}'' indicates the $i$th instance of the $(T, xx, yy, zz)$ combination, where $\textrm{type}=T$, $m=xx$, $n=yy$, and $100\delta=zz$.
The best known lower-bound value (``LB'') and solution value (``UB'') are the results obtained from two heuristic algorithms and two exact algorithms by \citet{WIMY18}.
The notations ``obj,'' ``time,'' ``ite,'' ``gap,'' ``LB,'' ``$\downarrow$,'' and ``$\uparrow$,''
as well as the bold values in columns ``obj'' and ``ite,'' are the same as those in Tables~\ref{tbl:kp-1}--\ref{tbl:kp-9} for the \mbox{MMR-KP}.

\begin{table}[htbp]
\centering \fontsize{8.5pt}{10.1pt}\selectfont
\setlength{\tabcolsep}{4.3pt}
\caption{MMR-GAP results for type-A instances}
\label{tbl:gap-a}
\begin{tabular}{ll*{16}{r}}
\hline
                             && \multicolumn{2}{c}{Best Known}                  && \multicolumn{3}{c}{DS}                  && \multicolumn{4}{c}{iDS-H}                                                                  && \multicolumn{4}{c}{iDS-B}                                                                      \\ \cline{3-4} \cline{6-8} \cline{10-13} \cline{15-18} 
\multicolumn{1}{c}{instance} && \multicolumn{1}{c}{LB} & \multicolumn{1}{c}{UB} &&
\multicolumn{1}{c}{obj} & \multicolumn{1}{c}{time} & \multicolumn{1}{c}{\%gap} &&
\multicolumn{1}{c}{obj} & \multicolumn{1}{c}{time} & \multicolumn{1}{c}{iter} & \multicolumn{1}{c}{\%gap} &&
\multicolumn{1}{c}{obj} & \multicolumn{1}{c}{time} & \multicolumn{1}{c}{iter} & \multicolumn{1}{c}{\%gap} \\ \hline
a0504010-01	&&	16	&	16	&&	16	{\,\,\,\,}	&	0.01 	&	0.0 	&&		16		{\,\,\,\,}	&	0.01 	&		1		&	0.0 	&&		16		{\,\,\,\,}	&	0.01 	&		1		&	0.0 	\\
a0504010-02	&&	18	&	18	&&	18	{\,\,\,\,}	&	0.02 	&	0.0 	&&		18		{\,\,\,\,}	&	0.02 	&		1		&	0.0 	&&		18		{\,\,\,\,}	&	0.02 	&		1		&	0.0 	\\
a0504010-03	&&	5	&	5	&&	5	{\,\,\,\,}	&	0.01 	&	0.0 	&&		5		{\,\,\,\,}	&	0.01 	&		1		&	0.0 	&&		5		{\,\,\,\,}	&	0.01 	&		1		&	0.0 	\\
a0504010-04	&&	18	&	18	&&	18	{\,\,\,\,}	&	0.02 	&	0.0 	&&		18		{\,\,\,\,}	&	0.02 	&		1		&	0.0 	&&		18		{\,\,\,\,}	&	0.02 	&		1		&	0.0 	\\
a0504010-05	&&	12	&	12	&&	12	{\,\,\,\,}	&	0.02 	&	0.0 	&&		12		{\,\,\,\,}	&	0.02 	&		1		&	0.0 	&&		12		{\,\,\,\,}	&	0.02 	&		1		&	0.0 	\\
a0504025-01	&&	80	&	80	&&	81	\,$\uparrow$	&	0.07 	&	1.2 	&&		80		{\,\,\,\,}	&	0.13 	&		2		&	0.0 	&&		80		{\,\,\,\,}	&	0.13 	&		2		&	0.0 	\\
a0504025-02	&&	67	&	67	&&	67	{\,\,\,\,}	&	0.03 	&	0.0 	&&		67		{\,\,\,\,}	&	0.03 	&		1		&	0.0 	&&		67		{\,\,\,\,}	&	0.03 	&		1		&	0.0 	\\
a0504025-03	&&	59	&	59	&&	59	{\,\,\,\,}	&	0.03 	&	0.0 	&&		59		{\,\,\,\,}	&	0.03 	&		1		&	0.0 	&&		59		{\,\,\,\,}	&	0.03 	&		1		&	0.0 	\\
a0504025-04	&&	71	&	71	&&	71	{\,\,\,\,}	&	0.07 	&	0.0 	&&		71		{\,\,\,\,}	&	0.07 	&		1		&	0.0 	&&		71		{\,\,\,\,}	&	0.07 	&		1		&	0.0 	\\
a0504025-05	&&	84	&	84	&&	86	\,$\uparrow$	&	0.21 	&	2.3 	&&		84		{\,\,\,\,}	&	0.49 	&		2		&	0.0 	&&		84		{\,\,\,\,}	&	0.39 	&		2		&	0.0 	\\
a0504050-01	&&	261	&	261	&&	261	{\,\,\,\,}	&	0.21 	&	0.0 	&&		261		{\,\,\,\,}	&	0.21 	&		1		&	0.0 	&&		261		{\,\,\,\,}	&	0.21 	&		1		&	0.0 	\\
a0504050-02	&&	204	&	204	&&	204	{\,\,\,\,}	&	0.12 	&	0.0 	&&		204		{\,\,\,\,}	&	0.12 	&		1		&	0.0 	&&		204		{\,\,\,\,}	&	0.12 	&		1		&	0.0 	\\
a0504050-03	&&	164	&	164	&&	164	{\,\,\,\,}	&	0.05 	&	0.0 	&&		164		{\,\,\,\,}	&	0.05 	&		1		&	0.0 	&&		164		{\,\,\,\,}	&	0.05 	&		1		&	0.0 	\\
a0504050-04	&&	227	&	227	&&	227	{\,\,\,\,}	&	0.13 	&	0.0 	&&		227		{\,\,\,\,}	&	0.13 	&		1		&	0.0 	&&		227		{\,\,\,\,}	&	0.13 	&		1		&	0.0 	\\
a0504050-05	&&	219	&	219	&&	219	{\,\,\,\,}	&	0.22 	&	0.0 	&&		219		{\,\,\,\,}	&	0.22 	&		1		&	0.0 	&&		219		{\,\,\,\,}	&	0.22 	&		1		&	0.0 	\\
a0508010-01	&&	23	&	23	&&	23	{\,\,\,\,}	&	0.03 	&	0.0 	&&		23		{\,\,\,\,}	&	0.03 	&		1		&	0.0 	&&		23		{\,\,\,\,}	&	0.03 	&		1		&	0.0 	\\
a0508010-02	&&	29	&	29	&&	29	{\,\,\,\,}	&	0.03 	&	0.0 	&&		29		{\,\,\,\,}	&	0.03 	&		1		&	0.0 	&&		29		{\,\,\,\,}	&	0.03 	&		1		&	0.0 	\\
a0508010-03	&&	19	&	19	&&	20	\,$\uparrow$	&	0.07 	&	5.0 	&&		19		{\,\,\,\,}	&	0.18 	&		3		&	0.0 	&&		19		{\,\,\,\,}	&	0.14 	&	\textbf{2}	&	0.0 	\\
a0508010-04	&&	27	&	27	&&	27	{\,\,\,\,}	&	0.02 	&	0.0 	&&		27		{\,\,\,\,}	&	0.02 	&		1		&	0.0 	&&		27		{\,\,\,\,}	&	0.02 	&		1		&	0.0 	\\
a0508010-05	&&	27	&	27	&&	27	{\,\,\,\,}	&	0.06 	&	0.0 	&&		27		{\,\,\,\,}	&	0.06 	&		1		&	0.0 	&&		27		{\,\,\,\,}	&	0.06 	&		1		&	0.0 	\\
a0508025-01	&&	118	&	141	&&	141	{\,\,\,\,}	&	0.15 	&	16.3 	&&		141		{\,\,\,\,}	&	0.15 	&		1		&	16.3 	&&		141		{\,\,\,\,}	&	0.15 	&		1		&	16.3 	\\
a0508025-02	&&	108	&	108	&&	108	{\,\,\,\,}	&	0.07 	&	0.0 	&&		108		{\,\,\,\,}	&	0.07 	&		1		&	0.0 	&&		108		{\,\,\,\,}	&	0.07 	&		1		&	0.0 	\\
a0508025-03	&&	91	&	91	&&	91	{\,\,\,\,}	&	0.23 	&	0.0 	&&		91		{\,\,\,\,}	&	0.23 	&		1		&	0.0 	&&		91		{\,\,\,\,}	&	0.23 	&		1		&	0.0 	\\
a0508025-04	&&	104	&	116	&&	116	{\,\,\,\,}	&	0.07 	&	10.3 	&&		116		{\,\,\,\,}	&	0.07 	&		1		&	10.3 	&&		116		{\,\,\,\,}	&	0.07 	&		1		&	10.3 	\\
a0508025-05	&&	105	&	105	&&	105	{\,\,\,\,}	&	0.12 	&	0.0 	&&		105		{\,\,\,\,}	&	0.12 	&		1		&	0.0 	&&		105		{\,\,\,\,}	&	0.12 	&		1		&	0.0 	\\
a0508050-01	&&	300	&	427	&&	427	{\,\,\,\,}	&	1.03 	&	29.7 	&&		427		{\,\,\,\,}	&	1.05 	&		1		&	29.7 	&&		427		{\,\,\,\,}	&	1.05 	&		1		&	29.7 	\\
a0508050-02	&&	291	&	388	&&	388	{\,\,\,\,}	&	0.31 	&	25.0 	&&		388		{\,\,\,\,}	&	0.31 	&		1		&	25.0 	&&		388		{\,\,\,\,}	&	0.31 	&		1		&	25.0 	\\
a0508050-03	&&	348	&	487	&&	487	{\,\,\,\,}	&	1.21 	&	28.5 	&&		487		{\,\,\,\,}	&	1.24 	&		1		&	28.5 	&&		487		{\,\,\,\,}	&	1.24 	&		1		&	28.5 	\\
a0508050-04	&&	307	&	390	&&	390	{\,\,\,\,}	&	0.25 	&	21.3 	&&		390		{\,\,\,\,}	&	0.25 	&		1		&	21.3 	&&		390		{\,\,\,\,}	&	0.25 	&		1		&	21.3 	\\
a0508050-05	&&	307	&	418	&&	418	{\,\,\,\,}	&	0.52 	&	26.6 	&&		418		{\,\,\,\,}	&	0.52 	&		1		&	26.6 	&&		418		{\,\,\,\,}	&	0.52 	&		1		&	26.6 	\\
a1004010-01	&&	14	&	14	&&	14	{\,\,\,\,}	&	0.04 	&	0.0 	&&		14		{\,\,\,\,}	&	0.04 	&		1		&	0.0 	&&		14		{\,\,\,\,}	&	0.04 	&		1		&	0.0 	\\
a1004010-02	&&	16	&	16	&&	16	{\,\,\,\,}	&	0.02 	&	0.0 	&&		16		{\,\,\,\,}	&	0.02 	&		1		&	0.0 	&&		16		{\,\,\,\,}	&	0.02 	&		1		&	0.0 	\\
a1004010-03	&&	14	&	14	&&	14	{\,\,\,\,}	&	0.01 	&	0.0 	&&		14		{\,\,\,\,}	&	0.01 	&		1		&	0.0 	&&		14		{\,\,\,\,}	&	0.01 	&		1		&	0.0 	\\
a1004010-04	&&	13	&	13	&&	13	{\,\,\,\,}	&	0.02 	&	0.0 	&&		13		{\,\,\,\,}	&	0.02 	&		1		&	0.0 	&&		13		{\,\,\,\,}	&	0.02 	&		1		&	0.0 	\\
a1004010-05	&&	16	&	16	&&	17	\,$\uparrow$	&	0.05 	&	5.9 	&&		16		{\,\,\,\,}	&	0.15 	&		3		&	0.0 	&&		16		{\,\,\,\,}	&	0.11 	&	\textbf{2}	&	0.0 	\\
a1004025-01	&&	78	&	78	&&	78	{\,\,\,\,}	&	0.16 	&	0.0 	&&		78		{\,\,\,\,}	&	0.16 	&		1		&	0.0 	&&		78		{\,\,\,\,}	&	0.16 	&		1		&	0.0 	\\
a1004025-02	&&	54	&	54	&&	54	{\,\,\,\,}	&	0.03 	&	0.0 	&&		54		{\,\,\,\,}	&	0.03 	&		1		&	0.0 	&&		54		{\,\,\,\,}	&	0.03 	&		1		&	0.0 	\\
a1004025-03	&&	64	&	64	&&	64	{\,\,\,\,}	&	0.08 	&	0.0 	&&		64		{\,\,\,\,}	&	0.08 	&		1		&	0.0 	&&		64		{\,\,\,\,}	&	0.08 	&		1		&	0.0 	\\
a1004025-04	&&	45	&	45	&&	45	{\,\,\,\,}	&	0.02 	&	0.0 	&&		45		{\,\,\,\,}	&	0.02 	&		1		&	0.0 	&&		45		{\,\,\,\,}	&	0.02 	&		1		&	0.0 	\\
a1004025-05	&&	73	&	73	&&	75	\,$\uparrow$	&	0.17 	&	2.7 	&&		73		{\,\,\,\,}	&	0.69 	&		4		&	0.0 	&&		73		{\,\,\,\,}	&	0.79 	&		4		&	0.0 	\\
a1004050-01	&&	184	&	211	&&	211	{\,\,\,\,}	&	0.24 	&	12.8 	&&		210		\,$\downarrow$	&	2.10 	&		9		&	12.4 	&&		210		\,$\downarrow$	&	2.40 	&		9		&	12.4 	\\
a1004050-02	&&	165	&	182	&&	182	{\,\,\,\,}	&	0.11 	&	9.3 	&&		182		{\,\,\,\,}	&	0.11 	&		1		&	9.3 	&&		182		{\,\,\,\,}	&	0.11 	&		1		&	9.3 	\\
a1004050-03	&&	181	&	206	&&	206	{\,\,\,\,}	&	0.29 	&	12.1 	&&		206		{\,\,\,\,}	&	0.29 	&		1		&	12.1 	&&		206		{\,\,\,\,}	&	0.30 	&		1		&	12.1 	\\
a1004050-04	&&	161	&	169	&&	169	{\,\,\,\,}	&	0.06 	&	4.7 	&&		169		{\,\,\,\,}	&	0.06 	&		1		&	4.7 	&&		169		{\,\,\,\,}	&	0.06 	&		1		&	4.7 	\\
a1004050-05	&&	178	&	206	&&	206	{\,\,\,\,}	&	0.55 	&	13.6 	&&		205		\,$\downarrow$	&	1.87 	&		3		&	13.2 	&&		205		\,$\downarrow$	&	1.92 	&		3		&	13.2 	\\
a1008010-01	&&	22	&	22	&&	22	{\,\,\,\,}	&	0.02 	&	0.0 	&&		22		{\,\,\,\,}	&	0.02 	&		1		&	0.0 	&&		22		{\,\,\,\,}	&	0.02 	&		1		&	0.0 	\\
a1008010-02	&&	34	&	34	&&	34	{\,\,\,\,}	&	0.10 	&	0.0 	&&		34		{\,\,\,\,}	&	0.10 	&		1		&	0.0 	&&		34		{\,\,\,\,}	&	0.10 	&		1		&	0.0 	\\
a1008010-03	&&	36	&	39	&&	39	{\,\,\,\,}	&	0.20 	&	7.7 	&&		39		{\,\,\,\,}	&	0.20 	&		1		&	7.7 	&&		39		{\,\,\,\,}	&	0.20 	&		1		&	7.7 	\\
a1008010-04	&&	16	&	16	&&	16	{\,\,\,\,}	&	0.04 	&	0.0 	&&		16		{\,\,\,\,}	&	0.04 	&		1		&	0.0 	&&		16		{\,\,\,\,}	&	0.04 	&		1		&	0.0 	\\
a1008010-05	&&	29	&	29	&&	29	{\,\,\,\,}	&	0.12 	&	0.0 	&&		29		{\,\,\,\,}	&	0.12 	&		1		&	0.0 	&&		29		{\,\,\,\,}	&	0.12 	&		1		&	0.0 	\\
a1008025-01	&&	77	&	87	&&	87	{\,\,\,\,}	&	0.05 	&	11.5 	&&		87		{\,\,\,\,}	&	0.05 	&		1		&	11.5 	&&	\textbf{86}	\,$\downarrow$	&	0.17 	&		3		&	10.5 	\\
a1008025-02	&&	103	&	117	&&	117	{\,\,\,\,}	&	0.11 	&	12.0 	&&		117		{\,\,\,\,}	&	0.11 	&		1		&	12.0 	&&		117		{\,\,\,\,}	&	0.11 	&		1		&	12.0 	\\
a1008025-03	&&	102	&	126	&&	126	{\,\,\,\,}	&	0.69 	&	19.0 	&&		126		{\,\,\,\,}	&	0.69 	&		1		&	19.0 	&&		126		{\,\,\,\,}	&	0.69 	&		1		&	19.0 	\\
a1008025-04	&&	98	&	113	&&	113	{\,\,\,\,}	&	0.08 	&	13.3 	&&		113		{\,\,\,\,}	&	0.08 	&		1		&	13.3 	&&		113		{\,\,\,\,}	&	0.08 	&		1		&	13.3 	\\
a1008025-05	&&	91	&	109	&&	109	{\,\,\,\,}	&	0.12 	&	16.5 	&&		109		{\,\,\,\,}	&	0.12 	&		1		&	16.5 	&&		109		{\,\,\,\,}	&	0.12 	&		1		&	16.5 	\\
a1008050-01	&&	248	&	367	&&	367	{\,\,\,\,}	&	0.26 	&	32.4 	&&		366		\,$\downarrow$	&	164.77 	&		275		&	32.2 	&&		366		\,$\downarrow$	&	459.16 	&		275		&	32.2 	\\
a1008050-02	&&	296	&	433	&&	433	{\,\,\,\,}	&	2.56 	&	31.6 	&&		433		{\,\,\,\,}	&	2.56 	&		1		&	31.6 	&&		433		{\,\,\,\,}	&	2.56 	&		1		&	31.6 	\\
a1008050-03	&&	306	&	419	&&	419	{\,\,\,\,}	&	8.63 	&	27.0 	&&		418		\,$\downarrow$	&	291.91 	&		27		&	26.6 	&&		418		\,$\downarrow$	&	221.88 	&		27		&	26.8 	\\
a1008050-04	&&	240	&	321	&&	321	{\,\,\,\,}	&	0.11 	&	25.2 	&&		321		{\,\,\,\,}	&	0.11 	&		1		&	25.2 	&&		321		{\,\,\,\,}	&	0.11 	&		1		&	25.2 	\\
a1008050-05	&&	264	&	376	&&	376	{\,\,\,\,}	&	0.20 	&	29.8 	&&		376		{\,\,\,\,}	&	0.20 	&		1		&	29.8 	&&		376		{\,\,\,\,}	&	0.20 	&		1		&	29.8 	\\
\hline
\end{tabular}
\end{table}

\begin{table}[htbp]
\centering \fontsize{8.5pt}{10.1pt}\selectfont
\setlength{\tabcolsep}{3.6pt}
\caption{MMR-GAP results for type-B instances}
\label{tbl:gap-b}
\begin{tabular}{ll*{16}{r}}
\hline
                             && \multicolumn{2}{c}{Best Known}                  && \multicolumn{3}{c}{DS}                  && \multicolumn{4}{c}{iDS-H}                                                                  && \multicolumn{4}{c}{iDS-B}                                                                      \\ \cline{3-4} \cline{6-8} \cline{10-13} \cline{15-18} 
\multicolumn{1}{c}{instance} && \multicolumn{1}{c}{LB} & \multicolumn{1}{c}{UB} &&
\multicolumn{1}{c}{obj} & \multicolumn{1}{c}{time} & \multicolumn{1}{c}{\%gap} &&
\multicolumn{1}{c}{obj} & \multicolumn{1}{c}{time} & \multicolumn{1}{c}{iter} & \multicolumn{1}{c}{\%gap} &&
\multicolumn{1}{c}{obj} & \multicolumn{1}{c}{time} & \multicolumn{1}{c}{iter} & \multicolumn{1}{c}{\%gap} \\ \hline
b0504010-01	&&	18 	&	18 	&&	24 	\,$\uparrow$	&	0.12 	&	25.0 	&&		18 		{\,\,\,\,}	&	0.23 	&		2 		&	0.0 	&&		18 		{\,\,\,\,}	&	0.23 	&		2 		&	0.0 	\\
b0504010-02	&&	19 	&	19 	&&	19 	{\,\,\,\,}	&	0.11 	&	0.0 	&&		19 		{\,\,\,\,}	&	0.11 	&		1 		&	0.0 	&&		19 		{\,\,\,\,}	&	0.11 	&		1 		&	0.0 	\\
b0504010-03	&&	12 	&	12 	&&	12 	{\,\,\,\,}	&	0.06 	&	0.0 	&&		12 		{\,\,\,\,}	&	0.06 	&		1 		&	0.0 	&&		12 		{\,\,\,\,}	&	0.06 	&		1 		&	0.0 	\\
b0504010-04	&&	25 	&	25 	&&	28 	\,$\uparrow$	&	0.09 	&	10.7 	&&		25 		{\,\,\,\,}	&	0.42 	&		4 		&	0.0 	&&		25 		{\,\,\,\,}	&	0.29 	&		4 		&	0.0 	\\
b0504010-05	&&	22 	&	22 	&&	22 	{\,\,\,\,}	&	0.14 	&	0.0 	&&		22 		{\,\,\,\,}	&	0.14 	&		1 		&	0.0 	&&		22 		{\,\,\,\,}	&	0.14 	&		1 		&	0.0 	\\
b0504025-01	&&	84 	&	84 	&&	84 	{\,\,\,\,}	&	0.28 	&	0.0 	&&		84 		{\,\,\,\,}	&	0.28 	&		1 		&	0.0 	&&		84 		{\,\,\,\,}	&	0.28 	&		1 		&	0.0 	\\
b0504025-02	&&	64 	&	64 	&&	64 	{\,\,\,\,}	&	0.14 	&	0.0 	&&		64 		{\,\,\,\,}	&	0.14 	&		1 		&	0.0 	&&		64 		{\,\,\,\,}	&	0.14 	&		1 		&	0.0 	\\
b0504025-03	&&	38 	&	38 	&&	38 	{\,\,\,\,}	&	0.12 	&	0.0 	&&		38 		{\,\,\,\,}	&	0.12 	&		1 		&	0.0 	&&		38 		{\,\,\,\,}	&	0.12 	&		1 		&	0.0 	\\
b0504025-04	&&	66 	&	66 	&&	68 	\,$\uparrow$	&	0.13 	&	2.9 	&&		66 		{\,\,\,\,}	&	1.89 	&		11 		&	0.0 	&&		66 		{\,\,\,\,}	&	1.62 	&	\textbf{10}	&	0.0 	\\
b0504025-05	&&	77 	&	77 	&&	77 	{\,\,\,\,}	&	0.20 	&	0.0 	&&		77 		{\,\,\,\,}	&	0.20 	&		1 		&	0.0 	&&		77 		{\,\,\,\,}	&	0.20 	&		1 		&	0.0 	\\
b0504050-01	&&	248 	&	248 	&&	248 	{\,\,\,\,}	&	0.59 	&	0.0 	&&		248 		{\,\,\,\,}	&	0.59 	&		1 		&	0.0 	&&		248 		{\,\,\,\,}	&	0.59 	&		1 		&	0.0 	\\
b0504050-02	&&	227 	&	227 	&&	227 	{\,\,\,\,}	&	1.06 	&	0.0 	&&		227 		{\,\,\,\,}	&	1.06 	&		1 		&	0.0 	&&		227 		{\,\,\,\,}	&	1.06 	&		1 		&	0.0 	\\
b0504050-03	&&	108 	&	108 	&&	108 	{\,\,\,\,}	&	0.15 	&	0.0 	&&		108 		{\,\,\,\,}	&	0.15 	&		1 		&	0.0 	&&		108 		{\,\,\,\,}	&	0.15 	&		1 		&	0.0 	\\
b0504050-04	&&	215 	&	215 	&&	215 	{\,\,\,\,}	&	0.68 	&	0.0 	&&		215 		{\,\,\,\,}	&	0.68 	&		1 		&	0.0 	&&		215 		{\,\,\,\,}	&	0.68 	&		1 		&	0.0 	\\
b0504050-05	&&	191 	&	191 	&&	191 	{\,\,\,\,}	&	0.43 	&	0.0 	&&		191 		{\,\,\,\,}	&	0.43 	&		1 		&	0.0 	&&		191 		{\,\,\,\,}	&	0.43 	&		1 		&	0.0 	\\
b0508010-01	&&	31 	&	31 	&&	31 	{\,\,\,\,}	&	0.87 	&	0.0 	&&		31 		{\,\,\,\,}	&	0.87 	&		1 		&	0.0 	&&		31 		{\,\,\,\,}	&	0.87 	&		1 		&	0.0 	\\
b0508010-02	&&	29 	&	29 	&&	29 	{\,\,\,\,}	&	0.19 	&	0.0 	&&		29 		{\,\,\,\,}	&	0.19 	&		1 		&	0.0 	&&		29 		{\,\,\,\,}	&	0.19 	&		1 		&	0.0 	\\
b0508010-03	&&	30 	&	30 	&&	30 	{\,\,\,\,}	&	0.12 	&	0.0 	&&		30 		{\,\,\,\,}	&	0.12 	&		1 		&	0.0 	&&		30 		{\,\,\,\,}	&	0.12 	&		1 		&	0.0 	\\
b0508010-04	&&	45 	&	45 	&&	45 	{\,\,\,\,}	&	2.25 	&	0.0 	&&		45 		{\,\,\,\,}	&	2.25 	&		1 		&	0.0 	&&		45 		{\,\,\,\,}	&	2.25 	&		1 		&	0.0 	\\
b0508010-05	&&	31 	&	31 	&&	31 	{\,\,\,\,}	&	0.20 	&	0.0 	&&		31 		{\,\,\,\,}	&	0.20 	&		1 		&	0.0 	&&		31 		{\,\,\,\,}	&	0.20 	&		1 		&	0.0 	\\
b0508025-01	&&	94 	&	111 	&&	111 	{\,\,\,\,}	&	15.28 	&	15.3 	&&		111 		{\,\,\,\,}	&	33.58 	&		2 		&	15.3 	&&		111 		{\,\,\,\,}	&	34.29 	&		2 		&	15.3 	\\
b0508025-02	&&	113 	&	122 	&&	122 	{\,\,\,\,}	&	4.20 	&	7.4 	&&		122 		{\,\,\,\,}	&	4.20 	&		1 		&	7.4 	&&		122 		{\,\,\,\,}	&	4.20 	&		1 		&	7.4 	\\
b0508025-03	&&	123 	&	147 	&&	147 	{\,\,\,\,}	&	10.61 	&	16.3 	&&		147 		{\,\,\,\,}	&	10.61 	&		1 		&	16.3 	&&		147 		{\,\,\,\,}	&	10.61 	&		1 		&	16.3 	\\
b0508025-04	&&	113 	&	131 	&&	131 	{\,\,\,\,}	&	7.99 	&	13.7 	&&	\textbf{130}	\,$\downarrow$	&	48.99 	&		4 		&	13.1 	&&		131 		{\,\,\,\,}	&	7.99 	&		1 		&	13.7 	\\
b0508025-05	&&	95 	&	107 	&&	107 	{\,\,\,\,}	&	3.37 	&	11.2 	&&		107 		{\,\,\,\,}	&	3.37 	&		1 		&	11.2 	&&		107 		{\,\,\,\,}	&	3.37 	&		1 		&	11.2 	\\
b0508050-01	&&	258 	&	346 	&&	346 	{\,\,\,\,}	&	54.15 	&	25.4 	&&		345 		\,$\downarrow$	&	99.58 	&		3 		&	25.2 	&&		345 		\,$\downarrow$	&	135.25 	&		3 		&	25.2 	\\
b0508050-02	&&	298 	&	415 	&&	415 	{\,\,\,\,}	&	49.21 	&	28.2 	&&		415 		{\,\,\,\,}	&	49.21 	&		1 		&	28.2 	&&		415 		{\,\,\,\,}	&	49.21 	&		1 		&	28.2 	\\
b0508050-03	&&	292 	&	402 	&&	402 	{\,\,\,\,}	&	63.67 	&	27.4 	&&		402 		{\,\,\,\,}	&	63.67 	&		1 		&	27.4 	&&		402 		{\,\,\,\,}	&	63.67 	&		1 		&	27.4 	\\
b0508050-04	&&	293 	&	421 	&&	421 	{\,\,\,\,}	&	70.98 	&	30.4 	&&		421 		{\,\,\,\,}	&	70.98 	&		1 		&	30.4 	&&		421 		{\,\,\,\,}	&	70.98 	&		1 		&	30.4 	\\
b0508050-05	&&	269 	&	398 	&&	398 	{\,\,\,\,}	&	55.11 	&	32.4 	&&		398 		{\,\,\,\,}	&	55.13 	&		1 		&	32.4 	&&		398 		{\,\,\,\,}	&	55.13 	&		1 		&	32.4 	\\
b1004010-01	&&	21 	&	21 	&&	24 	\,$\uparrow$	&	0.17 	&	12.5 	&&		21 		{\,\,\,\,}	&	0.56 	&		4 		&	0.0 	&&		21 		{\,\,\,\,}	&	0.66 	&		4 		&	0.0 	\\
b1004010-02	&&	16 	&	16 	&&	18 	\,$\uparrow$	&	0.11 	&	11.1 	&&		16 		{\,\,\,\,}	&	0.34 	&		3 		&	0.0 	&&		16 		{\,\,\,\,}	&	0.31 	&		3 		&	0.0 	\\
b1004010-03	&&	16 	&	16 	&&	16 	{\,\,\,\,}	&	0.17 	&	0.0 	&&		16 		{\,\,\,\,}	&	0.17 	&		1 		&	0.0 	&&		16 		{\,\,\,\,}	&	0.17 	&		1 		&	0.0 	\\
b1004010-04	&&	14 	&	14 	&&	14 	{\,\,\,\,}	&	0.12 	&	0.0 	&&		14 		{\,\,\,\,}	&	0.12 	&		1 		&	0.0 	&&		14 		{\,\,\,\,}	&	0.12 	&		1 		&	0.0 	\\
b1004010-05	&&	28 	&	28 	&&	31 	\,$\uparrow$	&	0.27 	&	9.7 	&&		28 		{\,\,\,\,}	&	0.86 	&	\textbf{3}	&	0.0 	&&		28 		{\,\,\,\,}	&	1.08 	&		4 		&	0.0 	\\
b1004025-01	&&	100 	&	103 	&&	103 	{\,\,\,\,}	&	2.42 	&	2.9 	&&		102 		\,$\downarrow$	&	7.80 	&		3 		&	2.0 	&&		102 		\,$\downarrow$	&	9.53 	&		3 		&	2.0 	\\
b1004025-02	&&	71 	&	71 	&&	72 	\,$\uparrow$	&	0.38 	&	1.4 	&&		71 		{\,\,\,\,}	&	1.01 	&		3 		&	0.0 	&&		71 		{\,\,\,\,}	&	1.06 	&		3 		&	0.0 	\\
b1004025-03	&&	63 	&	63 	&&	69 	\,$\uparrow$	&	0.34 	&	8.7 	&&		63 		{\,\,\,\,}	&	0.78 	&		2 		&	0.0 	&&		63 		{\,\,\,\,}	&	0.72 	&		2 		&	0.0 	\\
b1004025-04	&&	58 	&	58 	&&	58 	{\,\,\,\,}	&	0.31 	&	0.0 	&&		58 		{\,\,\,\,}	&	0.31 	&		1 		&	0.0 	&&		58 		{\,\,\,\,}	&	0.31 	&		1 		&	0.0 	\\
b1004025-05	&&	76 	&	76 	&&	79 	\,$\uparrow$	&	1.18 	&	3.8 	&&		76 		{\,\,\,\,}	&	4.15 	&		3 		&	0.0 	&&		76 		{\,\,\,\,}	&	3.89 	&		3 		&	0.0 	\\
b1004050-01	&&	184 	&	194 	&&	194 	{\,\,\,\,}	&	3.55 	&	5.2 	&&		193 		\,$\downarrow$	&	6.94 	&		2 		&	4.7 	&&		193 		\,$\downarrow$	&	7.17 	&		2 		&	4.7 	\\
b1004050-02	&&	189 	&	189 	&&	189 	{\,\,\,\,}	&	0.74 	&	0.0 	&&		189 		{\,\,\,\,}	&	0.74 	&		1 		&	0.0 	&&		189 		{\,\,\,\,}	&	0.74 	&		1 		&	0.0 	\\
b1004050-03	&&	186 	&	188 	&&	192 	\,$\uparrow$	&	5.46 	&	3.1 	&&		187 		\,$\downarrow$	&	39.88 	&		7 		&	0.5 	&&		187 		\,$\downarrow$	&	39.95 	&		7 		&	0.5 	\\
b1004050-04	&&	190 	&	199 	&&	199 	{\,\,\,\,}	&	4.20 	&	4.5 	&&		199 		{\,\,\,\,}	&	4.20 	&		1 		&	4.5 	&&		199 		{\,\,\,\,}	&	4.20 	&		1 		&	4.5 	\\
b1004050-05	&&	185 	&	197 	&&	197 	{\,\,\,\,}	&	9.64 	&	6.1 	&&		195 		\,$\downarrow$	&	21.82 	&		2 		&	5.1 	&&		195 		\,$\downarrow$	&	18.94 	&		2 		&	5.1 	\\
b1008010-01	&&	24 	&	24 	&&	24 	{\,\,\,\,}	&	0.34 	&	0.0 	&&		24 		{\,\,\,\,}	&	0.34 	&		1 		&	0.0 	&&		24 		{\,\,\,\,}	&	0.34 	&		1 		&	0.0 	\\
b1008010-02	&&	40 	&	40 	&&	41 	\,$\uparrow$	&	4.05 	&	2.4 	&&		40 		{\,\,\,\,}	&	10.21 	&		2 		&	0.0 	&&		40 		{\,\,\,\,}	&	10.14 	&		2 		&	0.0 	\\
b1008010-03	&&	42 	&	42 	&&	44 	\,$\uparrow$	&	3.65 	&	4.5 	&&		42 		{\,\,\,\,}	&	130.41 	&		19 		&	0.0 	&&		42 		{\,\,\,\,}	&	123.66 	&	\textbf{18}	&	0.0 	\\
b1008010-04	&&	25 	&	25 	&&	26 	\,$\uparrow$	&	0.60 	&	3.8 	&&		25 		{\,\,\,\,}	&	1.49 	&		2 		&	0.0 	&&		25 		{\,\,\,\,}	&	1.31 	&		2 		&	0.0 	\\
b1008010-05	&&	25 	&	25 	&&	25 	{\,\,\,\,}	&	0.72 	&	0.0 	&&		25 		{\,\,\,\,}	&	0.72 	&		1 		&	0.0 	&&		25 		{\,\,\,\,}	&	0.72 	&		1 		&	0.0 	\\
b1008025-01	&&	83 	&	91 	&&	91 	{\,\,\,\,}	&	5.04 	&	8.8 	&&		91 		{\,\,\,\,}	&	5.11 	&		1 		&	8.8 	&&		91 		{\,\,\,\,}	&	5.09 	&		1 		&	8.8 	\\
b1008025-02	&&	123 	&	171 	&&	171 	{\,\,\,\,}	&	829.66 	&	28.1 	&&		170 		\,$\downarrow$	&	1967.46 	&		2 		&	27.6 	&&		170 		\,$\downarrow$	&	1785.74 	&		2 		&	27.6 	\\
b1008025-03	&&	113 	&	124 	&&	125 	\,$\uparrow$	&	5.77 	&	9.6 	&&		124 		{\,\,\,\,}	&	11.90 	&		2 		&	8.9 	&&		124 		{\,\,\,\,}	&	13.02 	&		2 		&	8.9 	\\
b1008025-04	&&	97 	&	116 	&&	116 	{\,\,\,\,}	&	23.35 	&	16.4 	&&		116 		{\,\,\,\,}	&	23.35 	&		1 		&	16.4 	&&		116 		{\,\,\,\,}	&	23.35 	&		1 		&	16.4 	\\
b1008025-05	&&	96 	&	116 	&&	116 	{\,\,\,\,}	&	29.19 	&	17.2 	&&		116 		{\,\,\,\,}	&	29.19 	&		1 		&	17.2 	&&		116 		{\,\,\,\,}	&	29.19 	&		1 		&	17.2 	\\
b1008050-01	&&	236 	&	338 	&&	338 	{\,\,\,\,}	&	1083.19 	&	30.2 	&&		338 		{\,\,\,\,}	&	1083.19 	&		1 		&	30.2 	&&		338 		{\,\,\,\,}	&	1083.19 	&		1 		&	30.2 	\\
b1008050-02	&&	293 	&	444 	&&	445 	\,$\uparrow$	&	3600.00 	&	34.2 	&&		445 		\,$\uparrow$	&	3600.00 	&		1 		&	34.2 	&&		445 		\,$\uparrow$	&	3600.00 	&		1 		&	34.2 	\\
b1008050-03	&&	291 	&	409 	&&	409 	{\,\,\,\,}	&	2621.89 	&	28.9 	&&		409 		{\,\,\,\,}	&	2621.89 	&		1 		&	28.9 	&&		409 		{\,\,\,\,}	&	2621.89 	&		1 		&	28.9 	\\
b1008050-04	&&	259 	&	381 	&&	381 	{\,\,\,\,}	&	3600.00 	&	32.0 	&&		381 		{\,\,\,\,}	&	3600.00 	&		1 		&	32.0 	&&		381 		{\,\,\,\,}	&	3600.00 	&		1 		&	32.0 	\\
b1008050-05	&&	238 	&	346 	&&	346 	{\,\,\,\,}	&	3600.00 	&	31.2 	&&		346 		{\,\,\,\,}	&	3600.00 	&		1 		&	31.2 	&&		346 		{\,\,\,\,}	&	3600.00 	&		1 		&	31.2 	\\
\hline
\end{tabular}
\end{table}

\begin{table}[htbp]
\centering \fontsize{8.5pt}{10.1pt}\selectfont
\setlength{\tabcolsep}{3.7pt}
\caption{ MMR-GAP results for type-C instances}
\label{tbl:gap-c}
\begin{tabular}{ll*{16}{r}}
\hline
                             && \multicolumn{2}{c}{Best Known}                  && \multicolumn{3}{c}{DS}                  && \multicolumn{4}{c}{iDS-H}                                                                  && \multicolumn{4}{c}{iDS-B}                                                                      \\ \cline{3-4} \cline{6-8} \cline{10-13} \cline{15-18} 
\multicolumn{1}{c}{instance} && \multicolumn{1}{c}{LB} & \multicolumn{1}{c}{UB} &&
\multicolumn{1}{c}{obj} & \multicolumn{1}{c}{time} & \multicolumn{1}{c}{\%gap} &&
\multicolumn{1}{c}{obj} & \multicolumn{1}{c}{time} & \multicolumn{1}{c}{iter} & \multicolumn{1}{c}{\%gap} &&
\multicolumn{1}{c}{obj} & \multicolumn{1}{c}{time} & \multicolumn{1}{c}{iter} & \multicolumn{1}{c}{\%gap} \\ \hline
c0504010-01	&&	18 	&	18 	&&	18 	{\,\,\,\,}	&	0.14 	&	0.0 	&&		18 		{\,\,\,\,}	&	0.14 	&		1 		&	0.0 	&&		18 		{\,\,\,\,}	&	0.14 	&		1 		&	0.0 	\\
c0504010-02	&&	16 	&	16 	&&	16 	{\,\,\,\,}	&	0.06 	&	0.0 	&&		16 		{\,\,\,\,}	&	0.06 	&		1 		&	0.0 	&&		16 		{\,\,\,\,}	&	0.06 	&		1 		&	0.0 	\\
c0504010-03	&&	14 	&	14 	&&	14 	{\,\,\,\,}	&	0.08 	&	0.0 	&&		14 		{\,\,\,\,}	&	0.08 	&		1 		&	0.0 	&&		14 		{\,\,\,\,}	&	0.08 	&		1 		&	0.0 	\\
c0504010-04	&&	17 	&	17 	&&	17 	{\,\,\,\,}	&	0.15 	&	0.0 	&&		17 		{\,\,\,\,}	&	0.15 	&		1 		&	0.0 	&&		17 		{\,\,\,\,}	&	0.15 	&		1 		&	0.0 	\\
c0504010-05	&&	23 	&	23 	&&	23 	{\,\,\,\,}	&	0.15 	&	0.0 	&&		23 		{\,\,\,\,}	&	0.15 	&		1 		&	0.0 	&&		23 		{\,\,\,\,}	&	0.15 	&		1 		&	0.0 	\\
c0504025-01	&&	47 	&	47 	&&	47 	{\,\,\,\,}	&	0.18 	&	0.0 	&&		47 		{\,\,\,\,}	&	0.18 	&		1 		&	0.0 	&&		47 		{\,\,\,\,}	&	0.18 	&		1 		&	0.0 	\\
c0504025-02	&&	60 	&	60 	&&	60 	{\,\,\,\,}	&	0.17 	&	0.0 	&&		60 		{\,\,\,\,}	&	0.17 	&		1 		&	0.0 	&&		60 		{\,\,\,\,}	&	0.17 	&		1 		&	0.0 	\\
c0504025-03	&&	103 	&	103 	&&	103 	{\,\,\,\,}	&	0.27 	&	0.0 	&&		103 		{\,\,\,\,}	&	0.27 	&		1 		&	0.0 	&&		103 		{\,\,\,\,}	&	0.27 	&		1 		&	0.0 	\\
c0504025-04	&&	83 	&	83 	&&	83 	{\,\,\,\,}	&	0.19 	&	0.0 	&&		83 		{\,\,\,\,}	&	0.19 	&		1 		&	0.0 	&&		83 		{\,\,\,\,}	&	0.19 	&		1 		&	0.0 	\\
c0504025-05	&&	85 	&	85 	&&	85 	{\,\,\,\,}	&	0.34 	&	0.0 	&&		85 		{\,\,\,\,}	&	0.34 	&		1 		&	0.0 	&&		85 		{\,\,\,\,}	&	0.34 	&		1 		&	0.0 	\\
c0504050-01	&&	160 	&	160 	&&	160 	{\,\,\,\,}	&	1.37 	&	0.0 	&&		160 		{\,\,\,\,}	&	1.37 	&		1 		&	0.0 	&&		160 		{\,\,\,\,}	&	1.37 	&		1 		&	0.0 	\\
c0504050-02	&&	202 	&	202 	&&	202 	{\,\,\,\,}	&	2.85 	&	0.0 	&&		202 		{\,\,\,\,}	&	2.85 	&		1 		&	0.0 	&&		202 		{\,\,\,\,}	&	2.85 	&		1 		&	0.0 	\\
c0504050-03	&&	217 	&	217 	&&	217 	{\,\,\,\,}	&	2.76 	&	0.0 	&&		217 		{\,\,\,\,}	&	2.76 	&		1 		&	0.0 	&&		217 		{\,\,\,\,}	&	2.76 	&		1 		&	0.0 	\\
c0504050-04	&&	173 	&	173 	&&	173 	{\,\,\,\,}	&	0.49 	&	0.0 	&&		173 		{\,\,\,\,}	&	0.49 	&		1 		&	0.0 	&&		173 		{\,\,\,\,}	&	0.49 	&		1 		&	0.0 	\\
c0504050-05	&&	244 	&	269 	&&	269 	{\,\,\,\,}	&	3.58 	&	9.3 	&&		269 		{\,\,\,\,}	&	3.59 	&		1 		&	9.3 	&&		269 		{\,\,\,\,}	&	3.59 	&		1 		&	9.3 	\\
c0508010-01	&&	25 	&	25 	&&	25 	{\,\,\,\,}	&	0.70 	&	0.0 	&&		25 		{\,\,\,\,}	&	0.70 	&		1 		&	0.0 	&&		25 		{\,\,\,\,}	&	0.70 	&		1 		&	0.0 	\\
c0508010-02	&&	28 	&	28 	&&	28 	{\,\,\,\,}	&	0.18 	&	0.0 	&&		28 		{\,\,\,\,}	&	0.18 	&		1 		&	0.0 	&&		28 		{\,\,\,\,}	&	0.18 	&		1 		&	0.0 	\\
c0508010-03	&&	35 	&	35 	&&	35 	{\,\,\,\,}	&	0.69 	&	0.0 	&&		35 		{\,\,\,\,}	&	0.69 	&		1 		&	0.0 	&&		35 		{\,\,\,\,}	&	0.69 	&		1 		&	0.0 	\\
c0508010-04	&&	39 	&	39 	&&	39 	{\,\,\,\,}	&	0.89 	&	0.0 	&&		39 		{\,\,\,\,}	&	0.89 	&		1 		&	0.0 	&&		39 		{\,\,\,\,}	&	0.89 	&		1 		&	0.0 	\\
c0508010-05	&&	28 	&	28 	&&	28 	{\,\,\,\,}	&	0.29 	&	0.0 	&&		28 		{\,\,\,\,}	&	0.29 	&		1 		&	0.0 	&&		28 		{\,\,\,\,}	&	0.29 	&		1 		&	0.0 	\\
c0508025-01	&&	118 	&	132 	&&	132 	{\,\,\,\,}	&	0.88 	&	10.6 	&&		132 		{\,\,\,\,}	&	0.88 	&		1 		&	10.6 	&&		132 		{\,\,\,\,}	&	0.88 	&		1 		&	10.6 	\\
c0508025-02	&&	107 	&	112 	&&	112 	{\,\,\,\,}	&	2.38 	&	4.5 	&&		112 		{\,\,\,\,}	&	2.38 	&		1 		&	4.5 	&&		112 		{\,\,\,\,}	&	2.38 	&		1 		&	4.5 	\\
c0508025-03	&&	114 	&	131 	&&	131 	{\,\,\,\,}	&	5.61 	&	13.0 	&&		131 		{\,\,\,\,}	&	5.66 	&		1 		&	13.0 	&&		131 		{\,\,\,\,}	&	5.66 	&		1 		&	13.0 	\\
c0508025-04	&&	119 	&	142 	&&	142 	{\,\,\,\,}	&	20.82 	&	16.2 	&&		141 		\,$\downarrow$	&	62.26 	&		3 		&	15.6 	&&		141 		\,$\downarrow$	&	72.20 	&		3 		&	15.6 	\\
c0508025-05	&&	103 	&	116 	&&	116 	{\,\,\,\,}	&	3.78 	&	11.2 	&&		113 		\,$\downarrow$	&	9.39 	&		2 		&	8.8 	&&		113 		\,$\downarrow$	&	8.91 	&		2 		&	8.8 	\\
c0508050-01	&&	275 	&	402 	&&	402 	{\,\,\,\,}	&	59.66 	&	31.6 	&&		402 		{\,\,\,\,}	&	59.66 	&		1 		&	31.6 	&&		402 		{\,\,\,\,}	&	59.66 	&		1 		&	31.6 	\\
c0508050-02	&&	283 	&	389 	&&	389 	{\,\,\,\,}	&	18.80 	&	27.2 	&&		389 		{\,\,\,\,}	&	18.80 	&		1 		&	27.2 	&&		389 		{\,\,\,\,}	&	18.80 	&		1 		&	27.2 	\\
c0508050-03	&&	284 	&	418 	&&	419 	\,$\uparrow$	&	41.25 	&	32.2 	&&		418 		{\,\,\,\,}	&	96.07 	&		2 		&	32.1 	&&		418 		{\,\,\,\,}	&	107.59 	&		2 		&	32.1 	\\
c0508050-04	&&	326 	&	503 	&&	503 	{\,\,\,\,}	&	88.67 	&	35.2 	&&		503 		{\,\,\,\,}	&	88.67 	&		1 		&	35.2 	&&		503 		{\,\,\,\,}	&	88.67 	&		1 		&	35.2 	\\
c0508050-05	&&	261 	&	365 	&&	365 	{\,\,\,\,}	&	15.01 	&	28.5 	&&		365 		{\,\,\,\,}	&	15.05 	&		1 		&	28.5 	&&		365 		{\,\,\,\,}	&	15.05 	&		1 		&	28.5 	\\
c1004010-01	&&	20 	&	20 	&&	21 	\,$\uparrow$	&	0.21 	&	4.8 	&&		20 		{\,\,\,\,}	&	2.12 	&		9 		&	0.0 	&&		20 		{\,\,\,\,}	&	1.84 	&	\textbf{8}	&	0.0 	\\
c1004010-02	&&	15 	&	15 	&&	15 	{\,\,\,\,}	&	0.19 	&	0.0 	&&		15 		{\,\,\,\,}	&	0.19 	&		1 		&	0.0 	&&		15 		{\,\,\,\,}	&	0.19 	&		1 		&	0.0 	\\
c1004010-03	&&	24 	&	24 	&&	28 	\,$\uparrow$	&	0.23 	&	14.3 	&&		24 		{\,\,\,\,}	&	0.48 	&		2 		&	0.0 	&&		24 		{\,\,\,\,}	&	0.46 	&		2 		&	0.0 	\\
c1004010-04	&&	26 	&	26 	&&	26 	{\,\,\,\,}	&	0.11 	&	0.0 	&&		26 		{\,\,\,\,}	&	0.11 	&		1 		&	0.0 	&&		26 		{\,\,\,\,}	&	0.11 	&		1 		&	0.0 	\\
c1004010-05	&&	25 	&	25 	&&	25 	{\,\,\,\,}	&	0.11 	&	0.0 	&&		25 		{\,\,\,\,}	&	0.11 	&		1 		&	0.0 	&&		25 		{\,\,\,\,}	&	0.11 	&		1 		&	0.0 	\\
c1004025-01	&&	67 	&	67 	&&	68 	\,$\uparrow$	&	0.30 	&	1.5 	&&		67 		{\,\,\,\,}	&	1.84 	&		5 		&	0.0 	&&		67 		{\,\,\,\,}	&	1.77 	&		5 		&	0.0 	\\
c1004025-02	&&	58 	&	58 	&&	61 	\,$\uparrow$	&	0.28 	&	4.9 	&&		58 		{\,\,\,\,}	&	1.24 	&		4 		&	0.0 	&&		58 		{\,\,\,\,}	&	1.15 	&		4 		&	0.0 	\\
c1004025-03	&&	70 	&	70 	&&	70 	{\,\,\,\,}	&	0.39 	&	0.0 	&&		70 		{\,\,\,\,}	&	0.39 	&		1 		&	0.0 	&&		70 		{\,\,\,\,}	&	0.39 	&		1 		&	0.0 	\\
c1004025-04	&&	65 	&	65 	&&	66 	\,$\uparrow$	&	0.35 	&	1.5 	&&		65 		{\,\,\,\,}	&	7.84 	&		17 		&	0.0 	&&		65 		{\,\,\,\,}	&	8.36 	&		17 		&	0.0 	\\
c1004025-05	&&	83 	&	83 	&&	85 	\,$\uparrow$	&	0.63 	&	2.4 	&&		83 		{\,\,\,\,}	&	4.64 	&		6 		&	0.0 	&&		83 		{\,\,\,\,}	&	4.60 	&		6 		&	0.0 	\\
c1004050-01	&&	202 	&	212 	&&	216 	\,$\uparrow$	&	5.52 	&	6.5 	&&		212 		{\,\,\,\,}	&	318.00 	&		34 		&	4.7 	&&		212 		{\,\,\,\,}	&	343.69 	&		34 		&	4.7 	\\
c1004050-02	&&	219 	&	219 	&&	219 	{\,\,\,\,}	&	1.61 	&	0.0 	&&		219 		{\,\,\,\,}	&	1.61 	&		1 		&	0.0 	&&		219 		{\,\,\,\,}	&	1.61 	&		1 		&	0.0 	\\
c1004050-03	&&	189 	&	199 	&&	201 	\,$\uparrow$	&	6.23 	&	6.0 	&&		197 		\,$\downarrow$	&	17.99 	&		3 		&	4.1 	&&		197 		\,$\downarrow$	&	17.38 	&		3 		&	4.1 	\\
c1004050-04	&&	187 	&	187 	&&	187 	{\,\,\,\,}	&	1.31 	&	0.0 	&&		187 		{\,\,\,\,}	&	1.31 	&		1 		&	0.0 	&&		187 		{\,\,\,\,}	&	1.31 	&		1 		&	0.0 	\\
c1004050-05	&&	202 	&	226 	&&	230 	\,$\uparrow$	&	14.72 	&	12.2 	&&		225 		\,$\downarrow$	&	1013.36 	&		48 		&	10.2 	&&		225 		\,$\downarrow$	&	1056.08 	&	\textbf{46}	&	10.2 	\\
c1008010-01	&&	29 	&	29 	&&	29 	{\,\,\,\,}	&	0.67 	&	0.0 	&&		29 		{\,\,\,\,}	&	0.67 	&		1 		&	0.0 	&&		29 		{\,\,\,\,}	&	0.67 	&		1 		&	0.0 	\\
c1008010-02	&&	40 	&	40 	&&	43 	\,$\uparrow$	&	19.64 	&	7.0 	&&		40 		{\,\,\,\,}	&	913.48 	&		28 		&	0.0 	&&		40 		{\,\,\,\,}	&	889.42 	&	\textbf{27}	&	0.0 	\\
c1008010-03	&&	33 	&	33 	&&	34 	\,$\uparrow$	&	0.56 	&	2.9 	&&		33 		{\,\,\,\,}	&	1.28 	&		2 		&	0.0 	&&		33 		{\,\,\,\,}	&	1.31 	&		2 		&	0.0 	\\
c1008010-04	&&	37 	&	37 	&&	41 	\,$\uparrow$	&	2.36 	&	9.8 	&&		37 		{\,\,\,\,}	&	11.14 	&		3 		&	0.0 	&&		37 		{\,\,\,\,}	&	7.77 	&		3 		&	0.0 	\\
c1008010-05	&&	32 	&	32 	&&	32 	{\,\,\,\,}	&	1.09 	&	0.0 	&&		32 		{\,\,\,\,}	&	1.09 	&		1 		&	0.0 	&&		32 		{\,\,\,\,}	&	1.09 	&		1 		&	0.0 	\\
c1008025-01	&&	85 	&	87 	&&	87 	{\,\,\,\,}	&	4.70 	&	2.3 	&&		87 		{\,\,\,\,}	&	4.70 	&		1 		&	2.3 	&&		87 		{\,\,\,\,}	&	4.70 	&		1 		&	2.3 	\\
c1008025-02	&&	121 	&	157 	&&	159 	\,$\uparrow$	&	329.82 	&	23.9 	&&		155 		\,$\downarrow$	&	1024.08 	&		3 		&	21.9 	&&		155 		\,$\downarrow$	&	908.12 	&		3 		&	21.9 	\\
c1008025-03	&&	111 	&	129 	&&	129 	{\,\,\,\,}	&	52.73 	&	14.0 	&&		128 		\,$\downarrow$	&	146.62 	&		3 		&	13.3 	&&		128 		\,$\downarrow$	&	135.10 	&		3 		&	13.3 	\\
c1008025-04	&&	112 	&	120 	&&	120 	{\,\,\,\,}	&	7.15 	&	6.7 	&&		120 		{\,\,\,\,}	&	7.15 	&		1 		&	6.7 	&&		120 		{\,\,\,\,}	&	7.15 	&		1 		&	6.7 	\\
c1008025-05	&&	111 	&	142 	&&	142 	{\,\,\,\,}	&	257.09 	&	21.8 	&&		142 		{\,\,\,\,}	&	257.09 	&		1 		&	21.8 	&&		142 		{\,\,\,\,}	&	257.09 	&		1 		&	21.8 	\\
c1008050-01	&&	250 	&	346 	&&	346 	{\,\,\,\,}	&	804.54 	&	27.7 	&&		346 		{\,\,\,\,}	&	805.55 	&		1 		&	27.7 	&&		346 		{\,\,\,\,}	&	805.55 	&		1 		&	27.7 	\\
c1008050-02	&&	285 	&	436 	&&	436 	{\,\,\,\,}	&	3600.00 	&	34.6 	&&		436 		{\,\,\,\,}	&	3600.00 	&		1 		&	34.6 	&&		436 		{\,\,\,\,}	&	3600.00 	&		1 		&	34.6 	\\
c1008050-03	&&	283 	&	393 	&&	393 	{\,\,\,\,}	&	1334.99 	&	28.0 	&&		393 		{\,\,\,\,}	&	1334.99 	&		1 		&	28.0 	&&		393 		{\,\,\,\,}	&	1334.99 	&		1 		&	28.0 	\\
c1008050-04	&&	293 	&	418 	&&	418 	{\,\,\,\,}	&	1355.04 	&	29.9 	&&		418 		{\,\,\,\,}	&	1355.04 	&		1 		&	29.9 	&&		418 		{\,\,\,\,}	&	1355.04 	&		1 		&	29.9 	\\
c1008050-05	&&	264 	&	390 	&&	390 	{\,\,\,\,}	&	3600.00 	&	32.3 	&&		390 		{\,\,\,\,}	&	3600.00 	&		1 		&	32.3 	&&		390 		{\,\,\,\,}	&	3600.00 	&		1 		&	32.3 	\\
\hline
\end{tabular}
\end{table}

\begin{table}[htbp]
\centering \fontsize{8.5pt}{10.1pt}\selectfont
\setlength{\tabcolsep}{3.2pt}
\caption{MMR-GAP results for type-E instances}
\label{tbl:gap-e}
\begin{tabular}{ll*{16}{r}}
\hline
                             && \multicolumn{2}{c}{Best Known}                  && \multicolumn{3}{c}{DS}                  && \multicolumn{4}{c}{iDS-H}                                                                  && \multicolumn{4}{c}{iDS-B}                                                                      \\ \cline{3-4} \cline{6-8} \cline{10-13} \cline{15-18} 
\multicolumn{1}{c}{instance} && \multicolumn{1}{c}{LB} & \multicolumn{1}{c}{UB} &&
\multicolumn{1}{c}{obj} & \multicolumn{1}{c}{time} & \multicolumn{1}{c}{\%gap} &&
\multicolumn{1}{c}{obj} & \multicolumn{1}{c}{time} & \multicolumn{1}{c}{iter} & \multicolumn{1}{c}{\%gap} &&
\multicolumn{1}{c}{obj} & \multicolumn{1}{c}{time} & \multicolumn{1}{c}{iter} & \multicolumn{1}{c}{\%gap} \\ \hline
e0504010-01	&&	224 	&	224 	&&	224 	{\,\,\,\,}	&	0.44 	&	0.0 	&&		224 		{\,\,\,\,}	&	0.44 	&		1 		&	0.0 	&&		224 		{\,\,\,\,}	&	0.44 	&		1 		&	0.0 	\\
e0504010-02	&&	184 	&	184 	&&	184 	{\,\,\,\,}	&	0.70 	&	0.0 	&&		184 		{\,\,\,\,}	&	0.70 	&		1 		&	0.0 	&&		184 		{\,\,\,\,}	&	0.70 	&		1 		&	0.0 	\\
e0504010-03	&&	133 	&	133 	&&	133 	{\,\,\,\,}	&	0.16 	&	0.0 	&&		133 		{\,\,\,\,}	&	0.16 	&		1 		&	0.0 	&&		133 		{\,\,\,\,}	&	0.16 	&		1 		&	0.0 	\\
e0504010-04	&&	190 	&	190 	&&	190 	{\,\,\,\,}	&	0.79 	&	0.0 	&&		190 		{\,\,\,\,}	&	0.79 	&		1 		&	0.0 	&&		190 		{\,\,\,\,}	&	0.79 	&		1 		&	0.0 	\\
e0504010-05	&&	221 	&	221 	&&	225 	\,$\uparrow$	&	2.87 	&	1.8 	&&		221 		{\,\,\,\,}	&	12.36 	&		4 		&	0.0 	&&		221 		{\,\,\,\,}	&	14.77 	&		4 		&	0.0 	\\
e0504025-01	&&	826 	&	826 	&&	826 	{\,\,\,\,}	&	11.01 	&	0.0 	&&		826 		{\,\,\,\,}	&	11.01 	&		1 		&	0.0 	&&		826 		{\,\,\,\,}	&	11.01 	&		1 		&	0.0 	\\
e0504025-02	&&	706 	&	706 	&&	706 	{\,\,\,\,}	&	6.51 	&	0.0 	&&		706 		{\,\,\,\,}	&	6.51 	&		1 		&	0.0 	&&		706 		{\,\,\,\,}	&	6.51 	&		1 		&	0.0 	\\
e0504025-03	&&	659 	&	659 	&&	660 	\,$\uparrow$	&	3.61 	&	0.2 	&&		659 		{\,\,\,\,}	&	77.88 	&		13 		&	0.0 	&&		659 		{\,\,\,\,}	&	79.42 	&		13 		&	0.0 	\\
e0504025-04	&&	639 	&	639 	&&	639 	{\,\,\,\,}	&	7.37 	&	0.0 	&&		639 		{\,\,\,\,}	&	7.37 	&		1 		&	0.0 	&&		639 		{\,\,\,\,}	&	7.37 	&		1 		&	0.0 	\\
e0504025-05	&&	670 	&	670 	&&	670 	{\,\,\,\,}	&	15.90 	&	0.0 	&&		670 		{\,\,\,\,}	&	15.90 	&		1 		&	0.0 	&&		670 		{\,\,\,\,}	&	15.90 	&		1 		&	0.0 	\\
e0504050-01	&&	1689 	&	2056 	&&	2056 	{\,\,\,\,}	&	20.71 	&	17.9 	&&		2056 		{\,\,\,\,}	&	20.71 	&		1 		&	17.9 	&&		2056 		{\,\,\,\,}	&	20.71 	&		1 		&	17.9 	\\
e0504050-02	&&	1562 	&	2028 	&&	2028 	{\,\,\,\,}	&	129.15 	&	23.0 	&&		2028 		{\,\,\,\,}	&	129.15 	&		1 		&	23.0 	&&		2028 		{\,\,\,\,}	&	129.15 	&		1 		&	23.0 	\\
e0504050-03	&&	1312 	&	1519 	&&	1519 	{\,\,\,\,}	&	4.42 	&	13.6 	&&		1519 		{\,\,\,\,}	&	4.42 	&		1 		&	13.6 	&&		1519 		{\,\,\,\,}	&	4.42 	&		1 		&	13.6 	\\
e0504050-04	&&	1365 	&	1689 	&&	1689 	{\,\,\,\,}	&	18.42 	&	19.2 	&&		1670 		\,$\downarrow$	&	362.72 	&		9 		&	18.3 	&&		1670 		\,$\downarrow$	&	358.70 	&		9 		&	18.3 	\\
e0504050-05	&&	1420 	&	1731 	&&	1731 	{\,\,\,\,}	&	13.56 	&	18.0 	&&		1731 		{\,\,\,\,}	&	13.56 	&		1 		&	18.0 	&&		1731 		{\,\,\,\,}	&	13.56 	&		1 		&	18.0 	\\
e0508010-01	&&	290 	&	331 	&&	337 	\,$\uparrow$	&	19.46 	&	13.9 	&&		331 		{\,\,\,\,}	&	124.75 	&		3 		&	12.4 	&&		331 		{\,\,\,\,}	&	105.86 	&		3 		&	12.4 	\\
e0508010-02	&&	240 	&	314 	&&	318 	\,$\uparrow$	&	29.24 	&	24.5 	&&		309 		\,$\downarrow$	&	113.63 	&		3 		&	22.3 	&&		309 		\,$\downarrow$	&	79.86 	&		3 		&	22.3 	\\
e0508010-03	&&	274 	&	343 	&&	343 	{\,\,\,\,}	&	31.18 	&	20.1 	&&		334 		\,$\downarrow$	&	68.43 	&		2 		&	18.0 	&&		334 		\,$\downarrow$	&	77.30 	&		2 		&	18.0 	\\
e0508010-04	&&	282 	&	400 	&&	400 	{\,\,\,\,}	&	154.01 	&	29.5 	&&		400 		{\,\,\,\,}	&	154.01 	&		1 		&	29.5 	&&		400 		{\,\,\,\,}	&	154.01 	&		1 		&	29.5 	\\
e0508010-05	&&	264 	&	264 	&&	264 	{\,\,\,\,}	&	3.44 	&	0.0 	&&		264 		{\,\,\,\,}	&	3.44 	&		1 		&	0.0 	&&		264 		{\,\,\,\,}	&	3.44 	&		1 		&	0.0 	\\
e0508025-01	&&	833 	&	1230 	&&	1230 	{\,\,\,\,}	&	1897.28 	&	32.3 	&&		1230 		{\,\,\,\,}	&	1905.65 	&		1 		&	32.3 	&&		1230 		{\,\,\,\,}	&	1905.65 	&		1 		&	32.3 	\\
e0508025-02	&&	775 	&	1254 	&&	1251 	\,$\downarrow$	&	3600.00 	&	38.0 	&&		1251 		\,$\downarrow$	&	3600.00 	&		1 		&	38.0 	&&		1251 		\,$\downarrow$	&	3600.00 	&		1 		&	38.0 	\\
e0508025-03	&&	828 	&	1271 	&&	1271 	{\,\,\,\,}	&	268.21 	&	34.9 	&&		1271 		{\,\,\,\,}	&	269.27 	&		1 		&	34.9 	&&		1271 		{\,\,\,\,}	&	269.27 	&		1 		&	34.9 	\\
e0508025-04	&&	941 	&	1529 	&&	1541 	\,$\uparrow$	&	3600.00 	&	38.9 	&&		1541 		\,$\uparrow$	&	3600.00 	&		1 		&	38.9 	&&		1541 		\,$\uparrow$	&	3600.00 	&		1 		&	38.9 	\\
e0508025-05	&&	783 	&	1122 	&&	1122 	{\,\,\,\,}	&	31.56 	&	30.2 	&&		1122 		{\,\,\,\,}	&	31.56 	&		1 		&	30.2 	&&		1122 		{\,\,\,\,}	&	31.56 	&		1 		&	30.2 	\\
e0508050-01	&&	2447 	&	3985 	&&	4009 	\,$\uparrow$	&	3600.00 	&	39.0 	&&		4009 		\,$\uparrow$	&	3600.00 	&		1 		&	39.0 	&&		4009 		\,$\uparrow$	&	3600.00 	&		1 		&	39.0 	\\
e0508050-02	&&	2173 	&	3596 	&&	3603 	\,$\uparrow$	&	3600.00 	&	39.7 	&&		3603 		\,$\uparrow$	&	3600.00 	&		1 		&	39.7 	&&		3603 		\,$\uparrow$	&	3600.00 	&		1 		&	39.7 	\\
e0508050-03	&&	2194 	&	3655 	&&	3655 	{\,\,\,\,}	&	3600.00 	&	40.0 	&&		3655 		{\,\,\,\,}	&	3600.00 	&		1 		&	40.0 	&&		3655 		{\,\,\,\,}	&	3600.00 	&		1 		&	40.0 	\\
e0508050-04	&&	2337 	&	3711 	&&	3711 	{\,\,\,\,}	&	3600.00 	&	37.0 	&&		3711 		{\,\,\,\,}	&	3600.00 	&		1 		&	37.0 	&&		3711 		{\,\,\,\,}	&	3600.00 	&		1 		&	37.0 	\\
e0508050-05	&&	2103 	&	3448 	&&	3448 	{\,\,\,\,}	&	1899.93 	&	39.0 	&&		3448 		{\,\,\,\,}	&	1899.93 	&		1 		&	39.0 	&&		3448 		{\,\,\,\,}	&	1899.93 	&		1 		&	39.0 	\\
e1004010-01	&&	257 	&	266 	&&	273 	\,$\uparrow$	&	94.38 	&	5.9 	&&		272 		\,$\uparrow$	&	1757.52 	&		14 		&	5.5 	&&		272 		\,$\uparrow$	&	1678.94 	&		14 		&	5.5 	\\
e1004010-02	&&	223 	&	223 	&&	278 	\,$\uparrow$	&	21.43 	&	19.8 	&&		278 		\,$\uparrow$	&	21.43 	&		1 		&	19.8 	&&		278 		\,$\uparrow$	&	21.43 	&		1 		&	19.8 	\\
e1004010-03	&&	259 	&	270 	&&	277 	\,$\uparrow$	&	58.88 	&	6.5 	&&		270 		{\,\,\,\,}	&	190.75 	&		3 		&	4.1 	&&		270 		{\,\,\,\,}	&	190.62 	&		3 		&	4.1 	\\
e1004010-04	&&	250 	&	250 	&&	255 	\,$\uparrow$	&	20.99 	&	2.0 	&&		255 		\,$\uparrow$	&	20.99 	&		1 		&	2.0 	&&		255 		\,$\uparrow$	&	20.99 	&		1 		&	2.0 	\\
e1004010-05	&&	191 	&	191 	&&	201 	\,$\uparrow$	&	4.83 	&	5.0 	&&		201 		\,$\uparrow$	&	4.83 	&		1 		&	5.0 	&&		201 		\,$\uparrow$	&	4.83 	&		1 		&	5.0 	\\
e1004025-01	&&	629 	&	778 	&&	793 	\,$\uparrow$	&	708.83 	&	20.7 	&&		779 		\,$\uparrow$	&	2102.07 	&		2 		&	19.3 	&&		779 		\,$\uparrow$	&	2488.24 	&		2 		&	19.3 	\\
e1004025-02	&&	573 	&	674 	&&	674 	{\,\,\,\,}	&	113.52 	&	15.0 	&&		642 		\,$\downarrow$	&	1030.07 	&		5 		&	10.7 	&&		642 		\,$\downarrow$	&	844.25 	&		5 		&	10.7 	\\
e1004025-03	&&	566 	&	676 	&&	676 	{\,\,\,\,}	&	1029.30 	&	16.3 	&&		676 		{\,\,\,\,}	&	1029.31 	&		1 		&	16.3 	&&		676 		{\,\,\,\,}	&	1029.31 	&		1 		&	16.3 	\\
e1004025-04	&&	690 	&	829 	&&	829 	{\,\,\,\,}	&	515.01 	&	16.8 	&&		824 		\,$\downarrow$	&	957.66 	&		2 		&	16.3 	&&		824 		\,$\downarrow$	&	1126.31 	&		2 		&	16.3 	\\
e1004025-05	&&	562 	&	629 	&&	629 	{\,\,\,\,}	&	187.29 	&	10.7 	&&		629 		{\,\,\,\,}	&	187.29 	&		1 		&	10.7 	&&		629 		{\,\,\,\,}	&	187.29 	&		1 		&	10.7 	\\
e1004050-01	&&	1312 	&	1651 	&&	1651 	{\,\,\,\,}	&	1005.86 	&	20.5 	&&		1651 		{\,\,\,\,}	&	1005.86 	&		1 		&	20.5 	&&	\textbf{1647}	\,$\downarrow$	&	3600.00 	&		4 		&	20.3 	\\
e1004050-02	&&	1318 	&	1656 	&&	1656 	{\,\,\,\,}	&	836.70 	&	20.4 	&&		1656 		{\,\,\,\,}	&	836.70 	&		1 		&	20.4 	&&		1656 		{\,\,\,\,}	&	836.70 	&		1 		&	20.4 	\\
e1004050-03	&&	1289 	&	1667 	&&	1667 	{\,\,\,\,}	&	1963.44 	&	22.7 	&&		1667 		{\,\,\,\,}	&	1963.44 	&		1 		&	22.7 	&&		1667 		{\,\,\,\,}	&	1963.44 	&		1 		&	22.7 	\\
e1004050-04	&&	1470 	&	1988 	&&	2006 	\,$\uparrow$	&	3600.00 	&	26.7 	&&		2006 		\,$\uparrow$	&	3600.00 	&		1 		&	26.7 	&&		2006 		\,$\uparrow$	&	3600.00 	&		1 		&	26.7 	\\
e1004050-05	&&	1222 	&	1456 	&&	1456 	{\,\,\,\,}	&	136.91 	&	16.1 	&&		1456 		{\,\,\,\,}	&	136.91 	&		1 		&	16.1 	&&		1456 		{\,\,\,\,}	&	136.91 	&		1 		&	16.1 	\\
e1008010-01	&&	391 	&	572 	&&	577 	\,$\uparrow$	&	3600.00 	&	32.2 	&&		577 		\,$\uparrow$	&	3600.00 	&		1 		&	32.2 	&&		577 		\,$\uparrow$	&	3600.00 	&		1 		&	32.2 	\\
e1008010-02	&&	362 	&	574 	&&	578 	\,$\uparrow$	&	3600.00 	&	37.4 	&&		578 		\,$\uparrow$	&	3600.00 	&		1 		&	37.4 	&&		578 		\,$\uparrow$	&	3600.00 	&		1 		&	37.4 	\\
e1008010-03	&&	312 	&	427 	&&	428 	\,$\uparrow$	&	3600.00 	&	27.1 	&&		428 		\,$\uparrow$	&	3600.00 	&		1 		&	27.1 	&&		428 		\,$\uparrow$	&	3600.00 	&		1 		&	27.1 	\\
e1008010-04	&&	396 	&	578 	&&	583 	\,$\uparrow$	&	3600.00 	&	32.1 	&&		583 		\,$\uparrow$	&	3600.00 	&		1 		&	32.1 	&&		583 		\,$\uparrow$	&	3600.00 	&		1 		&	32.1 	\\
e1008010-05	&&	370 	&	538 	&&	534 	\,$\downarrow$	&	3600.00 	&	30.7 	&&		534 		\,$\downarrow$	&	3600.00 	&		1 		&	30.7 	&&		534 		\,$\downarrow$	&	3600.00 	&		1 		&	30.7 	\\
e1008025-01	&&	1210 	&	1893 	&&	1889 	\,$\downarrow$	&	3600.00 	&	35.9 	&&		1889 		\,$\downarrow$	&	3600.00 	&		1 		&	35.9 	&&		1889 		\,$\downarrow$	&	3600.00 	&		1 		&	35.9 	\\
e1008025-02	&&	1107 	&	1867 	&&	1895 	\,$\uparrow$	&	3600.00 	&	41.6 	&&		1895 		\,$\uparrow$	&	3600.00 	&		1 		&	41.6 	&&		1895 		\,$\uparrow$	&	3600.00 	&		1 		&	41.6 	\\
e1008025-03	&&	1012 	&	1616 	&&	1616 	{\,\,\,\,}	&	3600.00 	&	37.4 	&&		1616 		{\,\,\,\,}	&	3600.00 	&		1 		&	37.4 	&&		1616 		{\,\,\,\,}	&	3600.00 	&		1 		&	37.4 	\\
e1008025-04	&&	1058 	&	1713 	&&	1713 	{\,\,\,\,}	&	3600.00 	&	38.2 	&&		1713 		{\,\,\,\,}	&	3600.00 	&		1 		&	38.2 	&&		1713 		{\,\,\,\,}	&	3600.00 	&		1 		&	38.2 	\\
e1008025-05	&&	1090 	&	1726 	&&	1726 	{\,\,\,\,}	&	3600.00 	&	36.8 	&&		1726 		{\,\,\,\,}	&	3600.00 	&		1 		&	36.8 	&&		1726 		{\,\,\,\,}	&	3600.00 	&		1 		&	36.8 	\\
e1008050-01	&&	2775 	&	4588 	&&	4660 	\,$\uparrow$	&	3600.00 	&	40.5 	&&		4660 		\,$\uparrow$	&	3600.00 	&		1 		&	40.5 	&&		4660 		\,$\uparrow$	&	3600.00 	&		1 		&	40.5 	\\
e1008050-02	&&	2707 	&	4349 	&&	4351 	\,$\uparrow$	&	3600.00 	&	37.8 	&&		4351 		\,$\uparrow$	&	3600.00 	&		1 		&	37.8 	&&		4351 		\,$\uparrow$	&	3600.00 	&		1 		&	37.8 	\\
e1008050-03	&&	2581 	&	4233 	&&	4239 	\,$\uparrow$	&	3600.00 	&	39.1 	&&		4239 		\,$\uparrow$	&	3600.00 	&		1 		&	39.1 	&&		4239 		\,$\uparrow$	&	3600.00 	&		1 		&	39.1 	\\
e1008050-04	&&	2390 	&	3947 	&&	3953 	\,$\uparrow$	&	3600.00 	&	39.5 	&&		3953 		\,$\uparrow$	&	3600.00 	&		1 		&	39.5 	&&		3953 		\,$\uparrow$	&	3600.00 	&		1 		&	39.5 	\\
e1008050-05	&&	2394 	&	4040 	&&	4062 	\,$\uparrow$	&	3600.00 	&	41.1 	&&		4062 		\,$\uparrow$	&	3600.00 	&		1 		&	41.1 	&&		4062 		\,$\uparrow$	&	3600.00 	&		1 		&	41.1 	\\
\hline
\end{tabular}
\end{table}

\end{appendices}
\end{document}